\newtheorem{theorem}{Theorem}[section]
\newtheorem{lemma}[theorem]{Lemma}
\newtheorem{corollary}[theorem]{Corollary}
\newtheorem{conjecture}[theorem]{Conjecture}
\theoremstyle{definition}
\newtheorem{definition}[theorem]{Definition}
\theoremstyle{remark}
\newtheorem{remark}[theorem]{Remark}
\newtheorem{example}[theorem]{Example}
\def\R{\mathbb{R}}
\def\Z{\mathbb{Z}}
\newcommand{\A}{\mathcal{A}}
\newcommand{\tb}{\mathit{tb}}
\def\d{\partial}
\def\End{\operatorname{End}}
\def\p{d}
\let\oldmarginpar\marginpar
\renewcommand\marginpar[1]{\oldmarginpar{\raggedright\footnotesize #1}}
\begin{document}

\title[Satellites and Chekanov--Eliashberg algebras]{Satellites of Legendrian knots and representations of the Chekanov--Eliashberg algebra}
\author{Lenhard Ng}
\address{Mathematics Department, Duke University, Durham, NC 27708}
\email{ng@math.duke.edu}
\author{Dan Rutherford}
\address{Department of Mathematics, University of Arkansas, Fayetteville, AR 72701}
\email{drruther@uark.edu}

\begin{abstract}
We study satellites of Legendrian knots in $\R^3$ and their relation to the Chekanov--Eliashberg differential graded algebra of the knot.
In particular, we generalize the well-known correspondence
between rulings of a Legendrian knot in $\R^3$ and augmentations of its DGA by showing that the DGA has finite-dimensional representations if and only if there exist certain rulings of satellites of the knot.
We derive several
consequences of this result, notably that the question of existence of ungraded finite-dimensional representations for the DGA of a Legendrian knot depends only on the topological type and Thurston--Bennequin number of the knot.
\end{abstract}

\maketitle

\section{Introduction}

The satellite construction in knot theory produces new knot types from a given knot $K$ by considering the image of a solid torus knot (or link) $L \subset S^1 \times D^2$ inside a tubular neighborhood of $K$.   This construction provides a simple template for producing whole classes of knot invariants of $K$ since we can simply apply existing invariants to the various satellites of $K$.  As a well-known example, when this scheme is applied to the Jones polynomial, the associated class of satellite invariants are typically organized into a sequence of so-called colored Jones polynomials.   This wider collection of invariants generalizes the Jones polynomial to a family of quantum $\mathfrak{sl}_2$ invariants obtained by labeling $K$ by an arbitrary irreducible representation.

An analog of the satellite construction exists for Legendrian knots in $\R^3$ with its standard contact structure.  In this setting, a Legendrian satellite $S(K,L) \subset \R^3$ arises from a Legendrian pattern,  $L \subset J^1(S^1)$, and a Legendrian companion, $K \subset \R^3$, where $J^1(S^1)= T^*S^1\times\R$ denotes the $1$-jet space of $S^1$.  In this article, we study the effect of this Legendrian satellite operation on a pair of related invariants of Legendrian knots, the ruling polynomials, and the Chekanov--Eliashberg differential graded algebra (DGA).  All of the relevant definitions are recalled in Section 2.

A well-known result \cite{F, FI, Sabloff} in Legendrian knot theory asserts that for a Legendrian knot $K \subset \R^3$, the existence of a normal ruling of the front projection of $K$ is equivalent to the existence of an augmentation of the Chekanov--Eliashberg DGA of $K$.  (See Theorem \ref{thm:FIS} below.)  In addition, a relationship between ruling polynomials and the Kauffman and HOMFLY-PT knot polynomials (\cite{R}, see Theorem \ref{thm:Kauffman} below) shows that for either of these conditions to hold, the Thurston--Bennequin number $tb(K)$ of $K$ must be maximal.  Moreover, the existence of $1$- or $2$-graded augmentations or normal rulings depends only on the Thurston--Bennequin number and topological knot type of $K$.  In this article, we present generalizations of these results to finite-dimensional representations of the Chekanov--Eliashberg algebra (where an augmentation is simply a $1$-dimensional representation) and certain normal rulings of Legendrian satellites of $K$.

To give a more precise overview of our main results, we introduce some notation.  Denote the Chekanov--Eliashberg differential graded algebra of $K$ over $\Z/2$ by $(\A(K,*), \partial)$.  In our notation, $*$ refers to a chosen base point on $K$ which corresponds to a distinguished algebra generator $t$ measuring homology classes in $H_1(K)$.  Given a divisor $\p$ of twice the rotation number of $K$, a $\p$-graded representation of $(\A(K,*), \partial)$ consists of a $\Z/\p$-graded vector space $V$ together with a homomorphism of differential graded algebras $f: (\A(K,*), \partial) \rightarrow (\End(V), 0)$.  The requirements here are that $f\circ \partial = 0$ and $f$ preserves grading mod $\p$.  See Definition \ref{def:rep} below.

In Theorem \ref{thm:MainResult} we provide necessary and sufficient conditions for the existence of $\p$-graded representations of $(\A(K,*), \partial)$ of any fixed graded dimension in terms of so-called normal rulings of certain satellites of $K$ (see Section~\ref{ssec:rulings} for the definition of normal ruling).  We can give a particularly simple statement in the special case of $1$-graded representations:

\begin{theorem}[cf.\ Theorem~\ref{thm:MainResult}] \label{thm:intro} Let $K\subset\R^3$ be a Legendrian knot. Then the DGA $\A(K, *)$ admits a $1$-graded representation of dimension $n$ if and only if the satellite of $K$ with an $n$-stranded Legendrian full twist, $\mathit{tw}_n$, has a normal ruling.
\end{theorem}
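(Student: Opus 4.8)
The plan is to route the claim through the classical augmentation--ruling correspondence of Theorem~\ref{thm:FIS}, applied not to $K$ itself but to the satellite $S(K,\mathit{tw}_n)$. Since $\mathit{tw}_n$ realizes the trivial (identity) permutation, $S(K,\mathit{tw}_n)$ is an $n$-component Legendrian link, and Theorem~\ref{thm:FIS} (in its version for links) asserts that $S(K,\mathit{tw}_n)$ admits a normal ruling if and only if its Chekanov--Eliashberg DGA $\A(S(K,\mathit{tw}_n))$ admits an augmentation. Thus the theorem reduces to the purely algebraic statement that $\A(S(K,\mathit{tw}_n))$ is augmentable precisely when $\A(K,*)$ admits a $1$-graded $n$-dimensional representation. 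This is the $\p=1$ case of the general result, and it is where all the work lies.

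To set up the dictionary I would first describe the front of $S(K,\mathit{tw}_n)$ explicitly: take $n$ parallel copies of the front of $K$, perturbed by a small Morse function so that consecutive copies are graphs of strictly increasing functions, and insert the full-twist tangle in a single fiber near the base point $*$. The Reeb chords of the satellite then fall into three families: (i) for each generator $a$ of $\A(K,*)$, an $n\times n$ block of crossings $a_{ij}$ coming from the $n$ copies of the crossing $a$; (ii) crossings internal to the full-twist tangle; and (iii) auxiliary crossings produced by the Morse perturbation near the cusps. Given an augmentation $\epsilon$ of $\A(S(K,\mathit{tw}_n))$, I would assemble family (i) into a matrix $M_a=(\epsilon(a_{ij}))\in\operatorname{Mat}_n(\Z/2)=\End(V)$ for each generator $a$, and read the twist data in (ii) as further matrices.

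The heart of the argument is to show that the satellite differential matricizes the differential of $K$: under the satellite construction each monomial $b_1\cdots b_k$ appearing in $\partial a$ gives rise, over the $n$ copies, to the matrix entries of the product $M_{b_1}\cdots M_{b_k}$, so that the augmentation equations $\epsilon\circ\partial=0$ on the block generators $a_{ij}$ become precisely the relations $\sum M_{b_1}\cdots M_{b_k}=0$, that is, the representation condition $f(\partial a)=0$ with $f(a)=M_a$. The remaining equations, from families (ii) and (iii), must be shown to contribute nothing beyond forcing the transition matrix across the twist to be invertible; this is exactly the role of the full twist. For the plain $n$-copy the Morse perturbation forces the matrices $M_a$ to be block upper triangular with augmentations of $\A(K,*)$ along the diagonal, whereas inserting $\mathit{tw}_n$ mixes the $n$ copies as one traverses $S^1$ and removes this triangularity, so that the resulting augmentation equations are satisfied exactly by the arbitrary $n$-dimensional representations $f(a)=M_a$ and by no others.

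The main obstacle is precisely this last point: carrying out the crossing bookkeeping in the full-twist region carefully enough to prove that its augmentation equations are equivalent to invertibility and impose no spurious relations, so that augmentations of $\A(S(K,\mathit{tw}_n))$ correspond (up to the expected equivalences) to genuine $n$-dimensional representations $f\colon(\A(K,*),\partial)\to(\End(V),0)$. Once this correspondence is established, combining it with Theorem~\ref{thm:FIS} yields both implications of the theorem. The grading is a non-issue here, since $\p=1$ makes $V$ ungraded and removes all degree constraints; the only remaining subtlety is tracking the base-point generator $t$, which maps to an invertible matrix and is handled together with the twist data.
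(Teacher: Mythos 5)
Your proposal follows the same overall architecture as the paper: pass to the DGA of the satellite, organize the crossings of $S(K,\mathit{tw}_n)$ lying over each crossing $a$ of $K$ into a matrix of generators, and show that the satellite differential ``matricizes'' $\partial$ so that augmentations of $\A(S(K,\mathit{tw}_n))$ correspond to $n$-dimensional representations of $\A(K,*)$; this is exactly the content of Theorem~\ref{thm:tech}, proved via the matrix formula of Theorem~\ref{thm:matrixdiff}. However, the step you flag as ``the main obstacle'' hides a genuine gap, and it is not where you locate it (the twist region), but at the left cusps. The matricization is only valid modulo the two-sided ideal generated by the crossings $b^m_{ij}$ produced near the left cusps of $K$ (your family (iii)): Theorem~\ref{thm:matrixdiff} gives $\bar{D}Q_m = \Phi_L(\partial q_m) + O(B)$ with an error term in that ideal, and similarly for right cusps, while the differentials of the twist crossings $p_i$ lie entirely in that ideal. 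Consequently, for an \emph{arbitrary} augmentation $\epsilon$ of the satellite DGA, the matrices $M_a = (\epsilon(a_{ij}))$ need \emph{not} satisfy $f(\partial a)=0$; one needs an augmentation with $\epsilon(b^m_{ij})=0$ for all $m,i,j$, and the plain augmentation--ruling correspondence (Theorem~\ref{thm:FIS}) gives no control over this. Nor is there an obvious algebraic normalization: augmentations can be nonzero on the $b^m_{ij}$ (e.g.\ on adjacent-strand cusp crossings, whose differentials vanish). The paper's mechanism for this normalization is precisely its ruling-theoretic detour: the sharpened Sabloff--Henry correspondence (Theorem~\ref{prop:bg}), which matches the first augmented crossing with the first switch, combined with the combinatorial Lemma~\ref{lem:TwRul} that every normal ruling of $S(K,\mathit{tw}_n)$ is automatically \emph{reduced} (no switches at left-cusp crossings). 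Without some substitute for these two ingredients, your forward implication (ruling of the satellite $\Rightarrow$ representation) does not go through.

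There is a second, smaller gap in the converse direction. A representation sends $t$ to an arbitrary element of $GL_n(\Z/2)$, but the satellite construction only realizes images of $t$ of the form $M_{\mathit{tw}_n}U$, where $M_{\mathit{tw}_n}$ is a $\Z/2$-specialization of the path matrix of the full twist---a product of two skew-upper-triangular matrices---and $U$ is upper triangular (coming from base points placed at the right cusps). To realize an arbitrary representation one must prove that every invertible matrix over $\Z/2$ is conjugate to a matrix of this form; this is the paper's Lemma~\ref{lem:factor}, established via rational canonical form. Your remark that the base-point generator $t$ ``is handled together with the twist data'' elides exactly this step, without which the implication (representation $\Rightarrow$ ruling of the satellite) is incomplete.
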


\noindent
Topologically, the satellite in Theorem~\ref{thm:intro} is the $n$-component link given by $n$ parallel copies of $K$ with respect to framing coefficient $tb(K)+1$.
We also generalize Theorem~\ref{thm:MainResult} to give an explicit relation between satellites with more general patterns than $\mathit{tw}_n$ and representations of the DGA of particular sorts; see Theorem~\ref{thm:tech}.

In order to prove Theorems~\ref{thm:intro} and~\ref{thm:tech}, we need to study normal rulings of general Legendrian satellites $S(K,L)$ in terms of the pattern $L \subset J^1(S^1)$ and companion $K \subset\R^3$, and this is an interesting subject in its own right.
Many normal rulings of a satellite $S(K,L)$ do not reflect any significant aspect of $K$.  Indeed, an explicit construction (Theorem \ref{thm:bijection} below) shows that any normal ruling of the pattern $L$ may be extended to a normal ruling of $S(K,L)$, so the satellite always has at least as many normal rulings as $L$.  (See Corollary \ref{cor:estimate}.)  If $S(K,L)$ happens to have  more normal rulings than $L$ we say that $K$ is {\it $L$-compatible}.  The question of existence of normal rulings of $K$ then naturally generalizes to the question of $L$-compatibility.  This turns out to be related to the existence of representations of $\A(K,*)$: $\A(K,*)$ has a finite dimensional representation if and only if we can find a pattern $L \subset J^1(S^1)$ such that $K$ is $L$-compatible, and for particular $L$ we can classify which sorts of representations of $\A(K,*)$ are equivalent to $L$-compatibility.
The approach here is to reduce from the case of an arbitrary pattern $L$ to the case when $L$ is a product (disjoint union) of particularly simple patterns $A_k$ called {\it basic fronts}. See Theorems~\ref{thm:tech} and~\ref{thm:arbitrary}.

We also extend other results regarding augmentations and rulings to the setting of finite-dimensional representations. For instance, we have the following generalization of the result that the existence of an augmentation of the DGA implies maximal Thurston--Bennequin number and depends only on topological knot type and $tb$:

\begin{theorem}[cf.\ Theorem~\ref{thm:topinvt}]
If the DGA $(\A(K,*),\d)$ has an (ungraded) finite-dimensional representation, then $K$ maximizes $tb$ within its topological knot type. Furthermore, the existence of such a representation depends only on $tb(K)$ and the topological type of $K$.
\end{theorem}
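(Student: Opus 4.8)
The plan is to reduce everything to the normal rulings of the twist satellites $S(K,\mathit{tw}_n)$ and then argue purely topologically. Ungraded representations are exactly $1$-graded representations, since a $\Z/1$-grading carries no information, so Theorem~\ref{thm:intro} applies directly: $\A(K,*)$ has an ungraded representation of dimension $n$ if and only if $S(K,\mathit{tw}_n)$ has a normal ruling. Hence the existence of \emph{some} ungraded finite-dimensional representation is equivalent to the existence of a normal ruling of $S(K,\mathit{tw}_n)$ for some $n \geq 1$, and it suffices to understand these rulings in terms of the topological type and $\tb$ of $K$.

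For the dependence statement, I would use that, for Legendrian links as for knots, the existence of a normal ruling is determined by the topological type together with $\tb$. This is the content of the ungraded version of Theorem~\ref{thm:Kauffman}: the ungraded ruling polynomial is the coefficient of the extremal power of $a$ in the Kauffman polynomial, and since ruling polynomials have nonnegative coefficients, the existence of a ruling is precisely the nonvanishing of a coefficient that is an invariant of the topological type normalized by $\tb$. It then remains to check that both invariants of $S(K,\mathit{tw}_n)$ are governed by $K$: topologically $S(K,\mathit{tw}_n)$ is $n$ parallel copies of $K$ with framing $\tb(K)+1$, so its topological type depends only on that of $K$ and on $\tb(K)$; and the satellite formula $\tb(S(K,L)) = n^2\,\tb(K) + \tb(L)$ shows $\tb(S(K,\mathit{tw}_n))$ depends only on $\tb(K)$ and $n$. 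For each fixed $n$ this makes the existence of an ungraded $n$-dimensional representation depend only on $\tb(K)$ and the topological type of $K$, and passing to the union over $n$ preserves this.

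For the maximality of $\tb(K)$ I would argue by contradiction using the same formula. Suppose $S(K,\mathit{tw}_n)$ has a normal ruling but $K$ is not $\tb$-maximal, so there is a Legendrian $\tilde K$ of the same topological type with $\tb(\tilde K) = \tb(K)+1$. Let $\mathbf 1_n$ be the trivial $n$-strand pattern; it differs from $\mathit{tw}_n$ by one full twist, so $\tb(\mathbf 1_n) = \tb(\mathit{tw}_n) - n(n-1)$. The links $S(\tilde K,\mathbf 1_n)$ and $S(K,\mathit{tw}_n)$ are both $n$ parallel copies of $K$ with framing $\tb(K)+1$, hence topologically isotopic, yet the satellite formula gives
\[ \tb(S(\tilde K,\mathbf 1_n)) = n^2\bigl(\tb(K)+1\bigr) + \tb(\mathit{tw}_n) - n(n-1) = \tb(S(K,\mathit{tw}_n)) + n. \]
Thus $S(K,\mathit{tw}_n)$ does not maximize $\tb$ in its topological type, contradicting Theorem~\ref{thm:Kauffman}, which forces any Legendrian link carrying a normal ruling to be $\tb$-maximal. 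Therefore $K$ maximizes $\tb$.

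The main obstacle is the bookkeeping in the two $\tb$ computations: establishing the satellite formula $\tb(S(K,L)) = n^2\,\tb(K) + \tb(L)$ with the correct coefficient $n^2$ (coming from the $n$ self-framings and the $n(n-1)$ mutual linkings of the strands), and verifying that trading one full twist of the pattern against one unit of companion framing preserves the topological link type while changing $\tb$ by exactly $n$. The other point requiring care is that I need Theorem~\ref{thm:Kauffman} in the generality of multi-component links and ungraded rulings, so that both ``a normal ruling forces maximal $\tb$'' and ``existence of a normal ruling is an invariant of topological type and $\tb$'' are available for the satellites $S(K,\mathit{tw}_n)$.
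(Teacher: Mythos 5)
Your proposal is correct, but it reaches the theorem by a genuinely different route than the paper, in both halves. For the invariance statement, the paper does not apply Theorem~\ref{thm:Kauffman} directly to the twist satellites: it first proves Theorem~\ref{prop:RedTopInv}, that the \emph{reduced} ruling polynomial $\widetilde{R}^\p_{S(K,L)}$ of an arbitrary satellite depends only on $L$, $\tb(K)$, and the smooth type of $K$, by an induction on patterns using the decomposition \eqref{eq:BijFormula}. Your shortcut works because you only ever need the pattern $\mathit{tw}_n$, where by Lemma~\ref{lem:TwRul} every normal ruling is automatically reduced, so the ordinary ruling polynomial of $S(K,\mathit{tw}_n)$ --- and hence the Kauffman polynomial coefficient, via Theorem~\ref{thm:Kauffman} for links together with Remark~\ref{rem:sat} and your (correct) formula $\tb(S(K,L)) = n^2\,\tb(K)+\tb(L)$ --- already answers the question with no induction. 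This is simpler; what the paper's more elaborate route buys is a statement valid for \emph{every} pattern $L$, which it needs elsewhere (Theorem~\ref{thm:arbitrary}, Section~\ref{sec:2d}). For the maximality statement, the paper (Corollary~\ref{cor:MaximizeTB}) argues by stabilization: if $\tb(K)$ is not maximal, then $K$ has the same smooth type and the same $\tb$ as a stabilized knot $K'$, whence $R^1_{S(K,L)} = R^1_{S(K',L)}$ by Remark~\ref{rem:sat} plus Theorem~\ref{thm:Kauffman}, and $R^1_{S(K',L)} = R^1_L$ by Theorem~\ref{prop:stab}; no higher-$\tb$ comparison link and no $\tb$ bookkeeping for satellites is needed. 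You instead exhibit the explicit comparison link $S(\tilde K,\mathbf{1}_n)$ with $\tb$ larger by exactly $n$ (your twist--framing trade and $\tb(\mathit{tw}_n)=n(n-1)$ are correct) and then invoke ``a normal ruling forces maximal $\tb$.''

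One point in that last step needs flagging: the implication ``a Legendrian link carrying a normal ruling is $\tb$-maximal'' is not literally Theorem~\ref{thm:Kauffman}, which is only an \emph{equality} identifying $R^1_L$ with a coefficient of $F_L$. To conclude maximality you must additionally invoke the Kauffman polynomial bound ($\tb(L') \leq -\deg_a F_{L'} - 1$ for every Legendrian representative $L'$), so that nonvanishing of the coefficient of $a^{-\tb(L)-1}$ pins $\tb(L)$ at the largest allowed value. That inequality is a standard result, and the paper itself asserts this implication in its introduction, so this is a citation-level gap rather than a mathematical one; but note that it is an input external to the paper, whereas the paper's stabilization argument (Theorem~\ref{prop:stab} plus Corollary~\ref{cor:MaximizeTB}, fed into Theorem~\ref{thm:MainResult} and Theorem~\ref{thm:3equiv}) deliberately avoids it, using only the equality of Theorem~\ref{thm:Kauffman}. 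If you want your proof self-contained relative to the paper, replace your comparison-link contradiction by that stabilization argument; otherwise, cite the Kauffman bound explicitly.
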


\noindent
This result again makes use of the connection with the Kauffman and HOMFLY-PT polynomials as in the work of the second author \cite{R}. There is also a relation to a conjecture by the first author \cite{NgCLI} about the topological invariance of the so-called abelianized characteristic algebra; see Section~\ref{ssec:repcond}.

We note that there are examples of Legendrian torus knots with maximal $tb$ that admit $2$-dimensional representations but not $1$-dimensional representations.  This was observed by Sivek in \cite{Siv}; the question of existence of finite dimensional representations was raised in the same article, and is discussed in this context in Section~\ref{sec:2d}.  At this time, it is an open question if there are knots that admit $3$-dimensional representations but have no representations of dimension $1$ or $2$.

\begin{remark}
Our work indicates that in a certain concrete way, information about the DGA of various satellites of $K$ is already encoded in the Chekanov--Eliashberg algebra of $K$ itself, though in an algebraically complicated manner. (By comparison, note e.g.\ that the colored Jones polynomials for a smooth knot are not determined by the Jones polynomial.) For example, Theorem \ref{thm:intro} shows that augmentations of the satellite of $K$ with a full twist correspond to $n$-dimensional representations of $\A(K,*)$.  It is interesting to ask how much this situation persists to the plethora of other invariants derived from $\A(K,*)$.  For instance, can the collection of linearized homology groups of satellites of $K$  be recovered from $\A(K,*)$?
\end{remark}

We conclude this section by outlining the rest of the paper.
In Section~\ref{sec:background}, we recall necessary background about the satellite construction, normal rulings, and the Chekanov--Eliashberg algebra.  Section~\ref{sec:rulings} focuses on normal rulings of satellite links.  A restricted class of {\it reduced normal rulings} is introduced as they have a particularly close connection with representations of $\A(K,*)$.

Section~\ref{sec:reps} contains theorems connecting finite-dimensional representations and normal rulings of satellites, including most of the results discussed in this introduction.  Our most precise result, Theorem~\ref{thm:tech}, gives, in the case of a pattern $L \subset J^1(S^1)$ without cusps, an equivalence between the existence of reduced rulings of $S(K,L)$ and finite dimensional representations of $\A(K,*)$ in which the distinguished generator $t$ has matrix related to the path matrix of $L$ introduced by K\'alm\'an \cite{Ka}.  

Finally, Section~\ref{sec:2d} provides a detailed treatment of a special case, the question of existence of $2$-dimensional representations (with particular restrictions on the image of $t$).  A sufficient condition for $A_2$-compatibility is given in Theorem~\ref{prop:A2}, and the case of knot types with $10$ or fewer crossings is addressed completely.

\subsection*{Acknowledgments}

We thank Brad Henry, Steven Sivek, and Michael Sullivan for interesting conversations related to this work. The first author was partially supported by NSF CAREER grant DMS-0846346. The second author thanks the Max-Planck-Institut f\"ur Mathematik in Bonn for providing an excellent research atmosphere during a portion of this work.

\section{Background}
\label{sec:background}

In this section, we give background on Legendrian links in $\R^3$ and $J^1(S^1)$, the Legendrian satellite construction, normal rulings, the Chekanov--Eliashberg differential graded algebra, and assorted other constructions that will be necessary for the remainder of the paper.

\subsection{Legendrian links}
We consider Legendrian links in $\R^3$ and in an open solid torus $S^1\times \R^2$ with the contact structure provided in either case by the kernel of $dz - y\, dx$.  From the point of view of contact geometry, these spaces are perhaps more naturally viewed as the $1$-jet spaces of the line and circle respectively.  Correspondingly, we will usually use $J^1(S^1)$ to denote $S^1\times \R^2$ with this contact structure.

A Legendrian link $L$ in a $1$-jet space $J^1(M)$ can be recovered
from its image in $M\times\R$ under the projection $T^*M\times\R \to
M\times\R$; this image is referred to as the {\it front projection} or
{\it front diagram} of $L$.  We will use the same notation for a link
and its front diagram, but will point out the distinction when necessary.  In the case when $L \subset J^1(S^1)$,
the $x$-coordinate is circle-valued, so the front projection is a
subset of an annulus.  We write this annulus $S^1\times\R$ as
$[0,1]\times\R$ with the lines $\{0\}\times\R$ and $\{1\}\times\R$
identified, and we will often view the front projection of a
Legendrian link in $J^1(S^1)$ as a subset of $[0,1]\times\R$.

Generically, front projections are unions of closed curves in the $xz$-plane or annulus which are immersed away from semi-cubical cusp singularities and one-to-one except for transverse double points.  In addition, vertical tangencies cannot occur.  Conversely, any collection of curves of this type lifts to a Legendrian link.  Two Legendrian links, $L_0$ and $L_1$, are {\it Legendrian isotopic} if there is a smooth isotopy, $L_t$, connecting them with $L_t$ Legendrian for all $t\in [0,1]$.  The Legendrian isotopy $L_t$ may always be chosen so that the front projections of the $L_t$ are generic except for a finite number of {\it Legendrian Reidemeister moves}; see Figure~\ref{fig:LReid}.  A Legendrian isotopy of this type will be referred to as a {\it generic isotopy}.

\begin{figure}
\centerline{\includegraphics[width=6.5in]{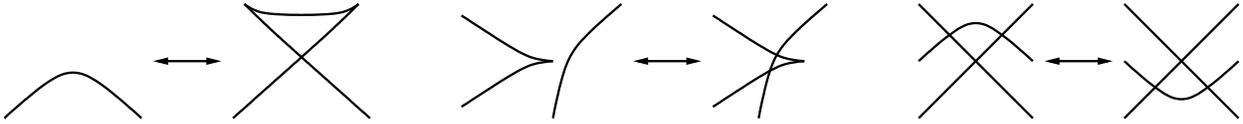}}
\caption{The Legendrian Reidemeister moves, from left to right: I, II, III.}
\label{fig:LReid}
\end{figure}

A Legendrian link $L$ has a framing arising from the contact structure.  There is a Legendrian isotopy invariant $\tb(L) \in \Z$ which equals the corresponding framing number in the case that $L$ is null-homologous.  In general, we define $\tb(L)$ via a generic front projection for $L$ by $\tb(L) = w(L) - \frac{1}{2} c(L)$ where $w(L)$ is the writhe of the projection and $c(L)$ is the number of cusps.

For an oriented (connected) Legendrian knot $K$, a second integer-valued invariant, the rotation number $r(K)$, is provided by the winding number of the tangent to $K$ around $0$ in the contact planes.  This is computed from a front diagram as $\frac{1}{2}(d(K) - u(K))$ where $d(K)$ (resp. $u(K)$) denotes the number of downward (resp. upward) oriented cusps.  For a multi-component link $L$, we will adopt the convention of taking $r(L)$ to be the greatest common divisor of the rotation numbers of the components of $L$.

\medskip

\subsection{Maslov potentials} \label{ssec:potentials}
The following additional structure on a front diagram may be viewed as a generalization of an orientation.

\begin{definition}  Let $\p$ be a divisor of $2 r(K)$.  A {\it
    $\p$-graded Maslov potential}, $\mu$, for $L$ is a function from
  the front diagram of $L$ to $\Z/\p$ which is constant except at cusp
  points where it increases by $1$ when moving from the lower branch
  of the cusp to the upper branch.  See Figure \ref{fig:MasP}. If $\p$ is even we
  assume in addition that $\mu$ is even along strands where the
  orientation of $L$ is in the positive $x$-direction.

\begin{figure}
\centerline{\includegraphics[scale=1]{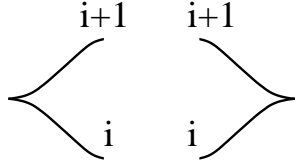}}
\caption{Requirements on a Maslov potential near cusps.}
\label{fig:MasP}
\end{figure}

A \textit{$\p$-graded Legendrian link} is a pair $(L, \mu)$ consisting of a Legendrian link $L$ with chosen $\p$-graded Maslov potential $\mu$.  Maslov potentials may be continued in a canonical way during any of the Legendrian Reidemeister moves.   Two $\p$-graded Legendrian links $(L_i, \mu_i)$, $i = 0,1$, are {\it Legendrian isotopic} if there is a generic isotopy from $L_0$ to $L_1$ which takes $\mu_0$ to $\mu_1$.
\end{definition}

We note that for a single-component Legendrian knot $K$, Maslov potentials are unique up to the addition of an overall constant. In fact, more is true: if $\mu$ and $\mu'$ are $\p$-graded Maslov potentials on $K$, then $(K,\mu)$ and $(K,\mu')$ are Legendrian isotopic, assuming if $\p$ is even that $\mu$ and $\mu'$ determine the same orientation on $K$. See Remark~\ref{rmk:uniqueMaslov}.

\medskip

\subsection{Legendrian satellites} \label{sub:satellite}

The following construction first appears in the literature in \cite{NgTr} where we refer the reader for additional details.
Let $(K, *) \subset \R^3$ be an oriented (connected) Legendrian knot with chosen base point, $*$, and $L \subset J^1(S^1)$ a link.  Using this information, we form a new link $S(K,L) \subset \R^3$ whose Legendrian isotopy type depends only on the Legendrian isotopy types of $K$ and $L$.  The knot $K$ is referred to as the {\it companion}; $L$ is the {\it pattern}; and $S(K,L)$ is the resulting {\it satellite}.

Say that $(K,*)$ is in \textit{general
    position} if its front has generic singularities and $*$ lies away
  from these; say that $L \subset J^1(S^1)$ is in general position if its
  front, viewed as a subset of $[0,1] \times \R$ with ends identified,
  has generic singularities, all away from $x=0$. Then we can define
  $S(K,L)$ diagrammatically.

Let $n$ denote the number of intersection points of the front diagram of $L$ with the vertical line $x=0$.  We begin by forming a link whose front projection is obtained by taking $n$ copies of $K$ and shifting the $z$-coordinate of each successive copy downward by a small amount.  This link is referred to as the $n$-copy of $K$.  Next, we insert the front projection of $L$ into the $n$-copy of $K$ at the location of the base point.  To do this we view the front projection of $L$ as a subset of $[0,1]\times \R$ and scale appropriately so that the $n$ intersection points of $L$ with $x=0$ and $x=1$ line up with the $n$ parallel strands in the $n$-copy of $K$.  Furthermore, the scaling should be carried out so that $L$ does not intersect other parts of the $n$-copy of $K$.  If $K$ is oriented to the right at $*$ then we insert $L$ directly into the $n$-copy.  However, if $K$ is oriented to the left at $*$ we instead insert the reflection of $L$ across a vertical axis.  See Figure \ref{fig:SatEx}.

\begin{figure}
\[
\begin{array}{ccc} 
\includegraphics[scale=.3]{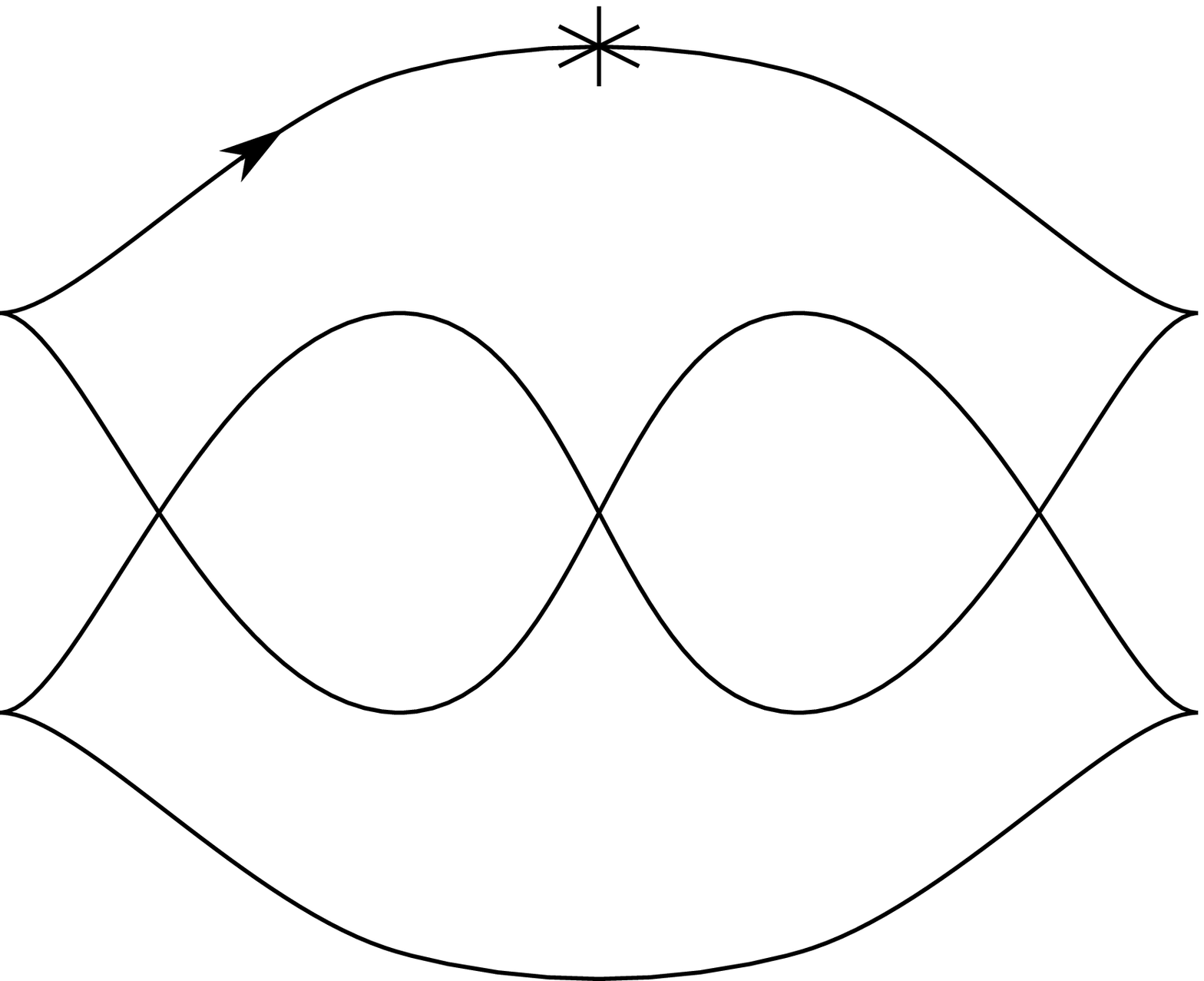} & \quad &\raisebox{2.1 ex}{\includegraphics[scale=.6]{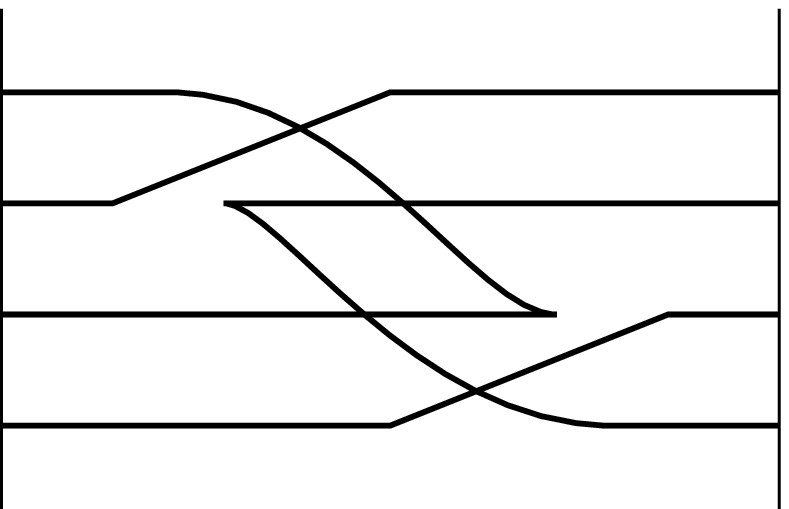}} \\
K & \quad & L
\end{array}
\]
\centerline{\includegraphics[scale=.4]{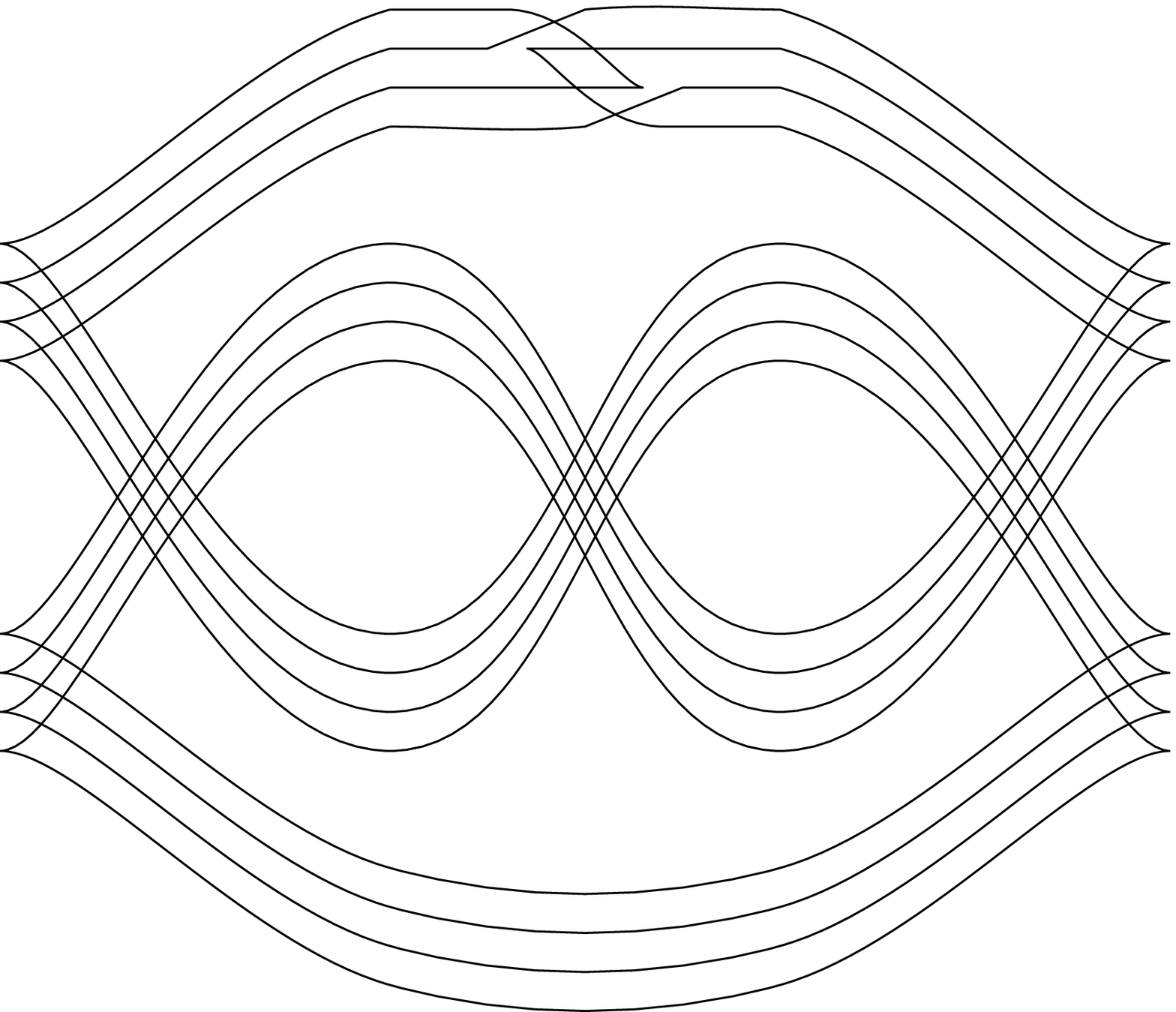}}
\centerline{$S(K,L)$}
\caption{The Legendrian satellite construction.}
\label{fig:SatEx}
\end{figure}

\begin{remark}  It will often be convenient to have an enumeration of
  the $n$ parallel copies of $K$ that make up the satellite.  In this
  article, we adopt the convention of labeling strands of $S(K,L)$
  corresponding to a single strand of $K$ from $1$ to $n$ from top to bottom,
with the index label increasing as the $z$-coordinate decreases.
\end{remark}

  If $\p$ is a common divisor of $2 r(K)$ and $2 r(L)$, then the choice of $\p$-graded Maslov potentials, $\mu$ and $\eta$, for $K$ and $L$ gives rise to a $\p$-graded Maslov potential, $\widetilde{\mu}$, for $S(K,L)$ as follows.  At the location of the base point $*$ where $L$ is inserted, $\widetilde{\mu}$ is the sum of $\mu(*)$ and $\eta$.  Since $K$ is connected, this uniquely characterizes $\widetilde{\mu}$.  Indeed, let $\eta_i$ denote the value of $\eta$ on the $i$-th strand of $L$ at $x=0$.  Then  at the $i$-th strand of $S(K,L)$ corresponding to the point $k \in K$, $\widetilde{\mu}= \mu(k) + \eta_i$.

In \cite{NgTr}, generic Legendrian isotopies are given to show that the Legendrian isotopy type of $S(K,L)$ depends only on the Legendrian isotopy types of $K$ and $L$ and, in particular, is independent of the choice of base point $*$.  Paying attention to Maslov potentials in the proof shows that the $\p$-graded Legendrian isotopy type of $\left( S(K,L), \widetilde{\mu}\right)$ depends only on that of $(K,\mu)$ and $(L, \eta)$.

\begin{remark}
\label{rem:sat}
 The analogous construction for smooth knots requires a choice of
 framing for $K$ in order to produce a satellite $S(K,L)$ whose
 isotopy type is well defined.  
If $K$ and $L$ are Legendrian, such a framing is given by the contact
framing for $K$, which has framing coefficient $tb(K)$ relative to the
Seifert framing; thus in this case the smooth knot type
of $S(K,L)$ depends only on $tb(K)$ along with the underlying smooth knot
types of $K$ and $L$.
\end{remark}

\begin{remark}
\label{rmk:uniqueMaslov}
As mentioned in Section~\ref{ssec:potentials}, one can use the satellite construction to give an easy proof of the following result.

\begin{theorem}
Let $K$ be a connected Legendrian knot in $\R^3$ with two $\p$-graded Maslov potentials $\mu$ and $\mu'$, where $\mu'-\mu = k$ for some constant $k$, with the additional stipulation that if $\p$ is even, then $k$ is also even. Then $(K,\mu)$ and $(K,\mu')$ are Legendrian isotopic.
\end{theorem}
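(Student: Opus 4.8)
The plan is to deduce the statement from the $\p$-graded invariance of the satellite construction recorded in Section~\ref{sub:satellite}: the $\p$-graded Legendrian isotopy type of $\left(S(K,L),\widetilde{\mu}\right)$ depends only on those of $(K,\mu)$ and $(L,\eta)$. The point is that this invariance lets me replace the arbitrary companion $K$ by the simplest possible pattern and reduce the entire assertion to a single model computation in $J^1(S^1)$. Concretely, I would take $L\subset J^1(S^1)$ to be the core circle, that is, the single strand whose front is a horizontal line meeting $x=0$ exactly once ($n=1$), oriented in the positive $x$-direction and carrying the constant Maslov potential $\eta\equiv c$. Inserting this trivial strand into the $1$-copy of $K$ does nothing to the knot, so $S(K,L)=K$ as Legendrian knots, while the induced potential is $\widetilde{\mu}=\mu+c$. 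Taking $c=0$ recovers $(K,\mu)$ and taking $c=k$ recovers $(K,\mu')$.

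By the graded satellite invariance, it therefore suffices to prove that the core circle equipped with the constant potentials $\eta\equiv 0$ and $\eta\equiv k$ are $\p$-graded Legendrian isotopic in $J^1(S^1)$. Here the parity hypothesis enters naturally: when $\p$ is even, a constant potential along a positively oriented strand must be even, so $\eta\equiv k$ is a legitimate Maslov potential precisely when $k$ is even, exactly as assumed. The key geometric input is that the core circle admits a Legendrian loop (an isotopy from $L$ back to itself) whose effect on the Maslov potential is a shift by $2$; concatenating copies of this loop and its inverse then realizes a shift by any even integer. When $\p$ is odd, $2$ is invertible in $\Z/\p$, so these shifts realize every value of $k$; when $\p$ is even they realize exactly the even shifts, which is all that is required.

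It remains to produce the shift-by-$2$ loop, and this is the one step that genuinely uses the circle factor of $J^1(S^1)$; I expect the verification here to be the main obstacle. Conceptually, the loop inserts one additional full rotation of the tangent framing of $L$ as the circle-valued coordinate $x$ traverses $S^1$, so that its Maslov index is $2$ rather than $0$; this is possible precisely because $x$ is circle-valued and has no analogue for a knot in $\R^3$. At the level of fronts I would realize it by creating a Reidemeister~I kink (which preserves both $\tb$ and the rotation number), transporting it once around the $S^1$-direction, and cancelling it, and the substantive check is that the canonical continuation of $\mu$ along this movie returns with net shift exactly $2$, carried out by direct bookkeeping of the cusps created and destroyed and confirming that the loop preserves the orientation of $L$ (so that the construction remains inside the $\p$-even parity constraint). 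Granting this model computation, the reduction above immediately yields the desired Legendrian isotopy between $(K,\mu)$ and $(K,\mu')$.
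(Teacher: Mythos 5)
Your reduction via graded satellite invariance is sound as far as it goes, and it is exactly dual to the paper's own argument: the paper makes the \emph{unknot} the companion and packages all of $K$ into the pattern, whereas you make $K$ the companion and use the trivial pattern $A_1$ (the core circle). The entire weight of your proof therefore rests on the model claim that $(A_1,\eta\equiv 0)$ and $(A_1,\eta\equiv 2)$ are $\p$-graded Legendrian isotopic in $J^1(S^1)$, and the loop you propose does not establish this. When you create a Reidemeister I kink on the core circle, the arc between the two new cusps acquires potential $\mp 1$ while the rest of the front keeps potential $0$. Transporting the kink once around the $S^1$-direction is a loop of fronts along which the Maslov potential is determined at every instant by continuity, and at every instant it equals $0$ on the long strand and $\mp 1$ on the hump: the two cusps are born together, travel around together, and die together, so no point of the front ever has its potential changed by this motion, and nothing special happens when the kink crosses the seam $x=0$. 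Undoing the kink therefore returns exactly the potential you started with; the net shift of your loop is $0$, not $2$. The ``direct bookkeeping'' you defer is precisely the step that fails.

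The deeper problem is that the model statement itself is false, so no cleverer loop can rescue this reduction. The core circle is the zero section of $J^1(S^1)$ and admits a generating family quadratic at infinity; by the persistence theorem of Chekanov--Th\'eret (valid in $J^1(S^1)$, and underlying the generating-family approach to rulings in \cite{ChP,LR,R2}), any Legendrian isotopy starting at $A_1$ is accompanied by a continuous family of such generating families, and the fiberwise Morse index measured against the index of the quadratic form at infinity pins down a preferred Maslov potential that is unchanged by stabilization and varies continuously, hence is preserved around any loop. So the constant potential on the core circle can never be shifted by a Legendrian isotopy in $J^1(S^1)$: for $\p>2$ (or for $\Z$-gradings) one has $(A_1,0)\not\cong(A_1,2)$. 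This is exactly why the paper runs the satellite argument in the opposite direction: the potential-shifting loop genuinely exists for the closed unknot $U\subset\R^3$ (Figure~\ref{fig:UnknotMaslov}) --- compact fronts in $J^1(\R)$ admit only linear-at-infinity generating families, so the normalization above is unavailable there --- and then $K$ is exhibited as $S(U,L)$ by a Reidemeister I move, so that the companion's loop induces the desired isotopy of $K$. In short, the flexibility needed for this theorem lives on the companion side in $\R^3$, not on the pattern side in $J^1(S^1)$, and your reduction places it on the wrong side.
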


\begin{figure}
\centerline{\includegraphics[scale=0.35]{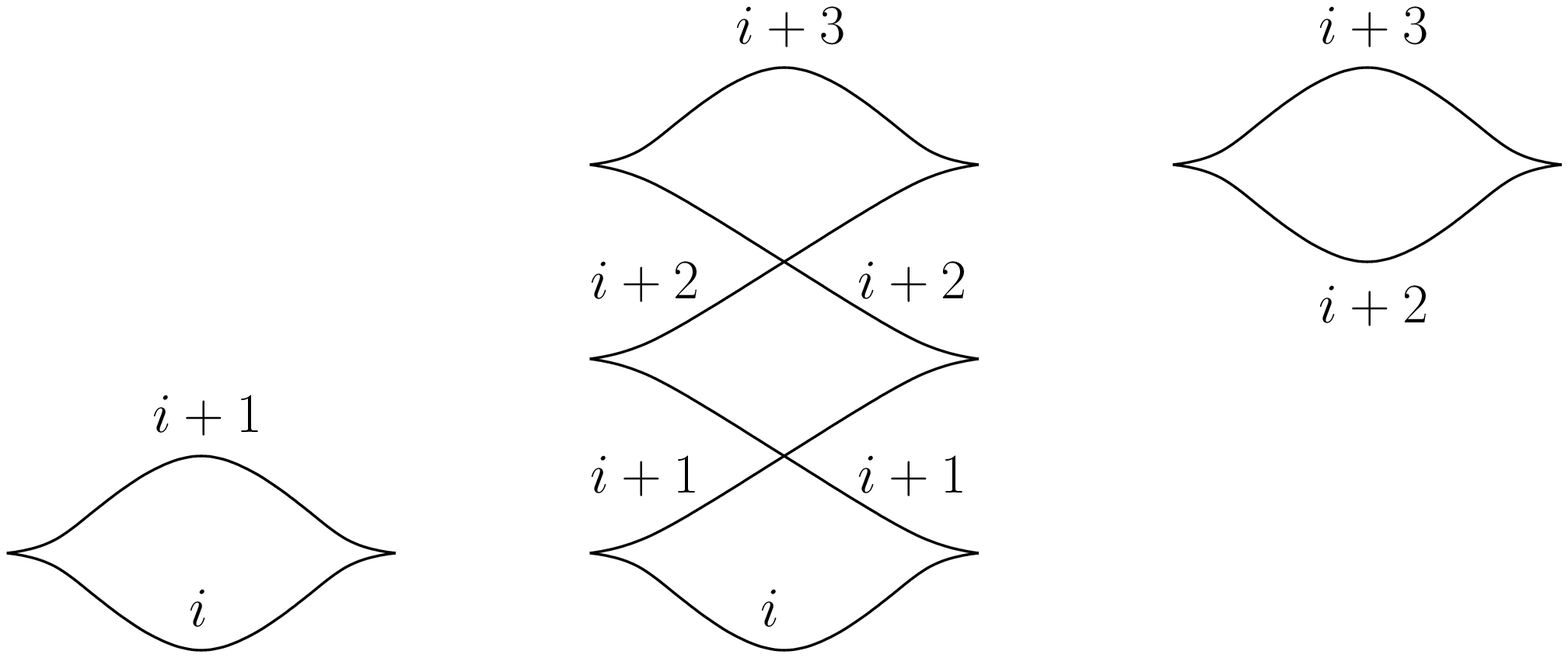}}
\caption{Changing Maslov potential for the unknot via Legendrian isotopy.}
\label{fig:UnknotMaslov}
\end{figure}

\begin{proof}
It suffices to prove the theorem when $k=2$, since a general $k$ is a multiple of $2$ (note that when $\p$ is odd, every integer is even mod $\p$). If in addition $K$ is the standard unknot $U$, then the theorem holds since we can apply two Legendrian Reidemeister I moves, then undo these moves, as shown in Figure~\ref{fig:UnknotMaslov}.

For a general knot $K$, note that $K$ can be expressed as a satellite of the unknot, $K = S(U,L)$ for some $L \subset J^1(S^1)$: simply perform a Reidemeister I move somewhere along the knot, and then the new loop is the unknot $U$. We can choose Maslov potentials on $U$ and $L$ so that the Maslov potential $\mu$ on $K$ is the sum of these two, in the sense discussed in this subsection. By the theorem for the unknot, there is a Legendrian isotopy on $U$ that changes the Maslov potential on $U$ by $2$; then the induced Legendrian isotopy on $K$ changes $\mu$ by $2$ as well.
\end{proof}

\end{remark}

\subsection{Basic fronts}  For $k \geq 1$, let $A_k \subset J^1(S^1)$ denote the Legendrian knot whose front diagram is given by identifying the ends of the $m$-stranded braid  $\sigma_1 \sigma_2 \cdots \sigma_{k-1}$, where strands are labeled from top to bottom, composition of braids is from left to right, and $\sigma_1,\ldots,\sigma_{k-1}$ represent the standard generators of the braid group $B_k$. That is, $A_k$ is the closure of the $k$-strand front \raisebox{-10pt}{\includegraphics[height=30pt]{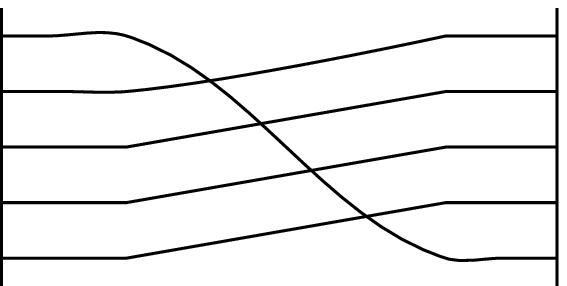}}, which winds $k$ times around the $S^1$ factor and has $k-1$ crossings.   We will refer to the $A_k$ as {\it basic fronts}.  Moreover, for $m \in \Z/\p$ we write $A_k^m$ for the basic front $A_k$
  with $\p$-graded Maslov potential identically equal to $m$.

Given front diagrams $L_1, L_2 \subset J^1(S^1)$, we define their product $L_1\cdot L_2$ by stacking $L_1$ above $L_2$.
We introduce some notation for $\p$-graded products of basic fronts (see Figure \ref{fig:BFEx} for an illustration).
Let $\lambda = (\lambda_1, \ldots, \lambda_\ell)$ be a (finite)
sequence of positive integers and ${\bf m} = (m_1, \ldots, m_{\ell})$
a sequence of elements of $\Z/\p$.  Writing $\Lambda = (\lambda, {\bf
  m})$ for a pair of such sequences, we define $A_\Lambda \subset
J^1(S^1)$ as the $\p$-graded Legendrian link
 $A_\Lambda = A^{m_1}_{\lambda_1} \cdots A^{m_\ell}_{\lambda_\ell}$.  Given such a pair $\Lambda$, we introduce a function
\[
{\bf n}_\Lambda : \Z/\p \to \Z_{\geq 0}, \quad \mbox{ where } \quad {\bf
  n}_\Lambda(k) = \sum_{i \text{ with } m_i=k} \lambda_i.
  \]
That is, ${\bf n}_\Lambda(k)$ denotes the total number of strands of a fixed $x$-coordinate with Maslov potential equal to $k$.

\begin{figure}
\centerline{\includegraphics[scale=.6]{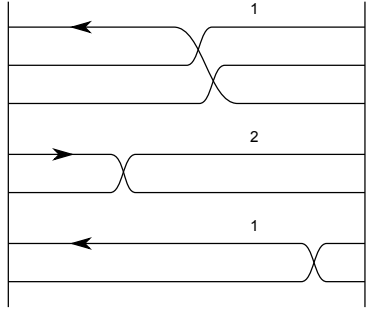}}
\caption{The product of basic fronts $A_\Lambda$ where $\Lambda = (\lambda, {\bf m})$ with $\lambda = (3, 2, 2)$ and ${\bf m}=(1,2,1)$.  The function ${\bf n}_{\Lambda}$ satisfies ${\bf n}_\Lambda(1) = 5, \, {\bf n}_\Lambda(2) = 2,$ and ${\bf n}_\Lambda(k) = 0$ for $k \neq 1,2$.}
\label{fig:BFEx}
\end{figure}

Later, we are able to reduce some questions about satellites with arbitrary pattern $L \subset J^1(S^1)$ to the case where $L$ is a product of basic fronts.

\subsection{Normal rulings}
\label{ssec:rulings}
The notion of normal ruling was developed independently in the works \cite{ChP,F}.   Let $L$ be a Legendrian knot in $\R^3$ or $J^1(S^1)$ whose front projection is generic in the sense described earlier in this section.  In addition, we now assume that all crossings and cusps have distinct $x$-coordinates.  This can be achieved after a small Legendrian isotopy.

In this section, we unify our notation by viewing $\R^3 \cong J^1(\R)$.  For $M = \R$ or $S^1$, we let $\pi : M\times \R \rightarrow M$ denote the projection $\pi(x,z) = x$, where the domain is viewed as the front projection of $J^1(M)$.
Let $\Sigma \subset M$ denote the projection of the set of cusp and crossing points of the front projection of $L$.  Furthermore, for any $x\in M$ let $L_x = \pi^{-1}(x)$.

\begin{definition} A continuous function $f$ from a subset $N \subset M$ to the front projection of $L \subset M\times \R$ is called a {\it section} if $\pi \circ f = \mathit{id}_N$.
\end{definition}

\begin{definition} \label{def:NR} A {\bf normal ruling} of the front projection of a link $L$ in $\R^3$ or $J^1(S^1)$ is a continuous involution, $\rho : L\setminus\pi^{-1}(\Sigma) \rightarrow L\setminus\pi^{-1}(\Sigma)$, $\rho^2 = \mathit{id}_{L\setminus\pi^{-1}(\Sigma)}$, satisfying the following:

\begin{itemize}
\item[(i)]  The involution $\rho$ is fixed point free.
\item[(ii)] The involution satisfies $\pi \circ \rho = \pi|_{L\setminus \pi^{-1}(\Sigma)}$ and therefore restricts to involutions $\rho_x : L_x \rightarrow L_x$ for each $x \in M \setminus \Sigma$.

For a component $N \subset M \setminus \Sigma$, the inverse image  of $N$ in $L$ is a union of ``strands'', $\displaystyle \pi^{-1}(N) = \bigsqcup S_i$, where each $S_i$ is mapped homeomorphically onto $N$ by $\pi$.  Due to the continuity condition, $\rho$ induces an involution of the collection of strands, and we say $\rho$ pairs $S_i$ with $S_j$ if $\rho(S_i) = S_j$.

\item[(iii)] In a neighborhood of a cusp point, the involution $\rho$ interchanges the upper and lower branch of the cusp.  On the remaining strands, the involution induced by $\rho_x$ should be the same on either side of the cusp.
\item[(iv)]  Strands meeting at a crossing should not be paired by $\rho$.
\item[(v)]  The involution $\rho$ extends continuously near crossings in the following sense.  If $N \subset M$ is such that $\pi^{-1}(N)$ contains a single crossing at $x$-coordinate $x_0$, then we can find sections $f_1, f_2, \ldots, f_N: N \rightarrow L \subset M\times \R$ such that every point of  $L \cap \pi^{-1}(N)$ is in the image of exactly one of the $f_i$ except for the crossing point which is in the image of exactly two of the $f_i$.  Moreover, these sections are preserved by the involution.  That is, for any $i = 1, \ldots, N$, there exists $j$ such that $\rho \circ f_i = f_j$ on $N \setminus \{x_0\}$.   It is clear that, except for their enumeration, the sections are uniquely determined by the involution $\rho$.

At the crossing point, there are two possibilities.  Either the two sections that meet at the crossing follow the diagram and cross in a transverse manner, or they each turn a corner at the crossing.  In the latter case, we refer to the crossing as a {\it switch} of $\rho$.  At a switch, one section covers the upper half of the crossing, and another covers the lower half.  Due to requirements (i) and (iv), each of these sections is paired by $\rho$ with a {\it companion strand} away from the crossing.

\item[(vi)] ({\it Normality condition}) Near a switch we can produce intervals on the vertical axis by connecting each switching strand with its companion strand.  These two intervals should either be disjoint or one should be contained in the other.  See Figure \ref{fig:NormC}.
\end{itemize}
\end{definition}

\begin{figure}
\centerline{\includegraphics[scale=1]{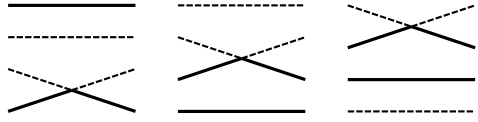}}
\caption{The normality condition.}
\label{fig:NormC}
\end{figure}

\begin{definition} \label{def:PGNR}
Suppose now that $(L, \mu)$ is a $\p$-graded Legendrian link.  We say that a ruling $\rho$ of $L$ is {\it $\p$-graded} with respect to $\mu$ if for
 $(x,z), (x,z')  \in L\setminus\pi_x^{-1}(\Sigma)$ with $z< z'$ and $\rho(x,z) = (x,z')$, we have $\mu(x,z') = \mu(x,z) +1$ mod $\p$.
\end{definition}

\begin{remark} \label{rem:NRDecomp} Alternatively, a normal ruling
 may be viewed as a global decomposition of the front diagram into pairs of sections, and we will make use of this perspective in our figures and proofs.  In the case of a link in $J^1(S^1)$, it is important here that we view the front diagram as a subset of $[0,1]\times \R$.  Then starting at $x=0$ or at the first left cusp of $L$ and working to the right, we can piece together the sections $f_i$ from condition (v).  This allows us to cover the front diagram of $L$ with a collection of sections with maximal domains of definition.  The involution then divides the sections into pairs $(P_i, Q_i)$ that begin and end at common cusps or possibly at common components of the boundary of $[0,1]\times \R$ in the case $L \subset J^1(S^1)$.

Note that in the case $L \subset J^1(S^1)$, a section that begins at $x=0$ does not necessarily have to end up at the same $z$-coordinate if it makes it all the way to $x=1$ without terminating at a cusp.  However, the involution $\rho$ is defined on the front diagram $L$ viewed as a subset of $S^1 \times \R$, so the overall division of points of $L$ into pairs at $x=0$ and $x=1$ should be the same.
\end{remark}

For Legendrian links in $J^1(S^1)$, it is appropriate for some purposes (see \cite{LR}) to relax the fixed point free condition of Definition \ref{def:NR} (i).

\begin{definition} Let $L \subset J^1(S^1)$.  A {\it generalized normal ruling} of $L$ is an involution $\rho$ satisfying the requirements of Definition \ref{def:NR} except for the following modifications.
\begin{itemize}
\item[(i)] The involution may have fixed points.

Near crossings, the locally defined sections $f_i$ are no longer uniquely determined by $\rho$ in the case where both of the crossing strands are fixed by $\rho$.  However, if at least one of the crossing strands is not fixed by $\rho$, then uniqueness still holds.  In particular, it is possible to have a switch where one of the switching strands has a companion strand and the other is a fixed point strand.  In this case, the normality condition is extended.

\item[(ii)] ({\it Generalized normality condition})  Near switches where one of the strands is fixed by $\rho$, the vertical interval connecting the non-fixed point strand to its companion strand should not intersect the other strand of the switch.  See Figure \ref{fig:GenNormC}.
\end{itemize}

\begin{figure}
\centerline{\includegraphics[scale=1]{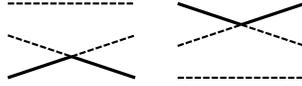}}
\caption{The generalized normality condition.}
\label{fig:GenNormC}
\end{figure}

Definition \ref{def:PGNR} carries over without change to provide a notion of $\p$-graded generalized normal ruling.
\end{definition}

\begin{remark}\label{rem:GNRDecomp}  A generalized normal ruling produces a decomposition of the front diagram of $L$ into pairs of sections $(P_i, Q_i)$ and a fixed point subset $F$ which does not contain cusps.  (Compare Remark \ref{rem:NRDecomp}.)  We make use of this perspective in our figures.
\end{remark}

Given a $\p$-graded Legendrian link $(L,\mu)$ in $J^1(\R)$ or $L\subset J^1(S^1)$, let $\mathcal{R}^\p(L, \mu)$ (resp. $\mathcal{GR}^\p(L, \mu)$)  denote the set of all normal rulings  (resp. generalized normal rulings) of $L$ which are $\p$-graded with respect to $\mu$.  Finally, we define the {\it $\p$-graded ruling polynomial}, $R_{(L,\mu)}^\p$, by
\[\displaystyle
R_{(L,\mu)}^\p(z) = \sum_{\rho \in \mathcal{R}^\p(L, \mu)} z^{j(\rho)} \quad \mbox{where} \quad  j(\rho) = \# \mbox{switches} - \# \mbox{right cusps}.
\]
Chekanov and Pushkar \cite{ChP} prove the following invariance result.

\begin{theorem}[\cite{ChP}] \label{thm:CP} If $(L,\mu_1)$ and $(L,\mu_2)$ are Legendrian isotopic as $\p$-graded links, then $R^\p_{(L_1, \mu_1)}(z) = R^\p_{(L_2, \mu_2)}(z)$.
\end{theorem}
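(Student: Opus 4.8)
The plan is to prove invariance by decomposing an arbitrary $\p$-graded Legendrian isotopy into a finite sequence of elementary local modifications of the front and exhibiting, for each such modification, a bijection between the sets of $\p$-graded normal rulings on either side that preserves the quantity $j(\rho) = \#\,\text{switches} - \#\,\text{right cusps}$. Since $R^\p_{(L,\mu)}(z) = \sum_{\rho} z^{j(\rho)}$, a weight-preserving bijection $\rho \mapsto \rho'$ across each move immediately yields equality of the two polynomials, and stringing the moves together along the isotopy gives the theorem.

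First I would fix the list of elementary moves. A generic isotopy, as described earlier in this section, keeps the front generic except at finitely many times; at those times either two crossings or cusps momentarily share an $x$-coordinate (a \emph{commutation} of two adjacent events in the $x$-ordering), or one of the three Legendrian Reidemeister moves of Figure~\ref{fig:LReid} occurs. Because the Maslov potential is continued canonically across every move, the $\p$-graded condition of Definition~\ref{def:PGNR} is automatically respected throughout the isotopy; I therefore only need to track how the combinatorial data of a ruling — its pairing of strands and its set of switches — transforms, and to check that the grading of the resulting ruling is the one forced by the continued potential.

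Next I would dispose of the commutation moves and the Reidemeister I and II moves. When two events that are separated in the vertical direction exchange their $x$-order, the strands involved are locally disjoint, so a ruling on one side restricts to the identical pairing on the other; this is a bijection preserving both the number of switches and the number of right cusps, hence $j(\rho)$. For the Reidemeister I and II moves, the diagram gains or loses cusps and crossings, and I would analyze the finitely many local ruling configurations: for each ruling before the move there is a canonically determined ruling after it, obtained by extending or restricting the pairing in the unique way compatible with conditions (iii)--(vi) of Definition~\ref{def:NR}, and in each case one verifies directly that the net change in the number of switches is exactly offset by the net change in the number of right cusps, so $j(\rho)$ is unchanged.

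The main obstacle is the Reidemeister III move, in which three strands and their three pairwise crossings are permuted across a triple point. The difficulty is twofold: the normality condition of Definition~\ref{def:NR}(vi), which governs the nesting of the intervals associated to switches, must be shown to correspond correctly on the two sides; and, unlike the commutation moves, the set of switches can genuinely change, so the bijection is not merely the identity on pairings. I would enumerate the local behaviours of a normal ruling on the three strands — recording which pairs are ruled together and which of the three crossings are switches — and check that the configurations satisfying normality before the move are matched exactly with those satisfying normality after it, with the switch count preserved in every case (the right-cusp count being unaffected by Reidemeister III). This case analysis, together with the automatic matching of gradings noted above, completes the argument; it is essentially the combinatorial core of the Chekanov--Pushkar invariance proof.
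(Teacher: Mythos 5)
The paper does not actually prove this statement: Theorem~\ref{thm:CP} is quoted from Chekanov and Pushkar' \cite{ChP}, so there is no internal proof to compare against. Your outline is precisely the strategy of that reference: reduce a generic $\p$-graded isotopy to commutations of $x$-coordinates plus the three front Reidemeister moves, and exhibit for each move a bijection on $\p$-graded normal rulings preserving $j(\rho)=\#\mbox{switches}-\#\mbox{right cusps}$, with the triple-point move as the essential case. The plan is sound; just be aware that all of the real content sits in the deferred verifications --- e.g.\ for the swallowtail move one must show the new crossing is forced to be a switch (so the extra switch cancels the extra right cusp), for the cusp--strand move one must use the normality condition together with condition (iv) of Definition~\ref{def:NR} to rule out switches at the two new crossings, and the Reidemeister III enumeration must be carried out in full, including the check that corresponding switch sets consist of crossings of degree $0$ so that the bijection restricts to $\p$-graded rulings.
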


\begin{remark} Note that ruling polynomials are unchanged by the addition of an overall constant to the Maslov potential.   In particular, if $K$ is a (connected) knot, then the $\p$-graded ruling polynomials are independent of the choice of $\mu$.  Moreover, if $K \subset \R^3$ is an oriented knot, then for any $\p$-graded $(L, \eta) \subset J^1(S^1)$ with $\p \,|\, 2 r(K)$, the polynomial $R^\p_{S(K,L)}(z)$ is a Legendrian isotopy invariant of $K$.
\end{remark}

When $\p=1$ or $2$ the ruling polynomials depend only on the underlying framed knot type of $L$.  This follows from:

\begin{theorem}[\cite{R}] \label{thm:Kauffman} For any Legendrian link $L \subset \R^3$, let $F_L, P_L \in \Z[a^{\pm 1}, z^{\pm 1}]$ denote the Kauffman and HOMFLY-PT link polynomials\footnote{We follow here the conventions of \cite{R} for the HOMFLY-PT and Kauffman polynomials.  However, our conventions for the power $j(\rho)$ appearing in the ruling polynomial differ by $1$ from \cite{R}.}.  Then the $1$-graded (resp. $2$-graded) ruling polynomial $R^1_L(z)$ (resp. $R^2_L(z)$) is equal to $z^{-1}$ times the coefficient of $a^{-\tb(L)-1}$ in $F_L$ (resp. $P_L$).
\end{theorem}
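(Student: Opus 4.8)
The plan is to compute the Kauffman polynomial $F_L$ (resp.\ the HOMFLY-PT polynomial $P_L$) directly from a generic front diagram of $L$ by a skein-theoretic state sum and to match the coefficient of the extreme power $a^{-\tb(L)-1}$ term-by-term with the ruling polynomial. The two cases run in parallel but diverge in one essential respect: the unoriented Kauffman polynomial corresponds to $1$-graded (ungraded) rulings, while the oriented HOMFLY-PT polynomial corresponds to $2$-graded rulings, because the mod-$2$ grading of a ruling records exactly which pairings are orientation-compatible.

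First I would fix a generic front with all crossings and cusps at distinct $x$-coordinates and present it, reading left to right, as a composition of elementary tangles: left cusps, right cusps, single crossings, and parallel strands. I would then expand the polynomial using its defining skein relation at each crossing. For HOMFLY-PT the relation $a\,P_{L_+}-a^{-1}P_{L_-}=z\,P_{L_0}$ expresses each crossing as a combination of a crossing-change term and the oriented smoothing $L_0$; for Kauffman the Dubrovnik relation additionally produces the disoriented smoothing $L_\infty$, together with the curl relations $D_{\mathrm{curl}_\pm}=a^{\pm1}D$. Iterating over all crossings yields an expansion of the polynomial as a sum over states, where a state records at each crossing whether the strands pass transversally or turn corners; after resolution every state is a disjoint union of standard ``eye'' unknots, each an embedded loop with two cusps and no crossings.

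Next I would track the powers of $a$ and $z$ contributed by each state. The global $a$-normalization is governed by the writhe and the curl relations, which combine the writhe $w(L)$ and cusp number $c(L)$ into $\tb(L)=w(L)-\tfrac12 c(L)$; each smoothing contributes a factor of $z$, and each terminal eye contributes a known monomial whose lowest-$a$ term is $-a^{-1}/z$. The crux is the degree estimate: I would show that $a^{-\tb(L)-1}$ is the minimal power of $a$ that occurs, and that a state contributes to this extremal coefficient precisely when the corners it selects assemble into a fixed-point-free involution satisfying the normality condition of Definition~\ref{def:NR} and Figure~\ref{fig:NormC}, i.e.\ a normal ruling (graded mod $2$ in the HOMFLY-PT case, ungraded in the Kauffman case). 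Under this identification a crossing where the strands turn corners is a switch, the terminal eyes biject with the ruling disks $(P_i,Q_i)$, and the accumulated $z$-exponent works out to $\#\mathrm{switches}-\#\mathrm{right\ cusps}+1=j(\rho)+1$; dividing by $z$ then recovers $R^1_L$ (resp.\ $R^2_L$).

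The main obstacle is exactly this extremal-degree analysis. I expect the hard part to be showing that all states which are not normal rulings---including involutions that violate the normality condition or fail to be fixed-point free---contribute strictly higher powers of $a$, so that they do not affect the coefficient of $a^{-\tb(L)-1}$. This demands a careful local computation at each elementary tangle recording how the minimal $a$-degree changes, together with a global argument explaining why the vertical nesting/disjointness of Figure~\ref{fig:NormC} is precisely the combinatorial constraint that minimizes the $a$-degree. The presence of the disoriented smoothing $L_\infty$ in the Kauffman case, absent for HOMFLY-PT, is where the two arguments genuinely differ and must be bookkept separately.
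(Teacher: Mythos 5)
Before assessing your argument, note that the paper itself contains no proof of Theorem~\ref{thm:Kauffman}: the result is imported wholesale from \cite{R}, so the only meaningful comparison is with Rutherford's proof there. Your outline does point in the same general direction as that proof---a skein-theoretic analysis of the extremal coefficient in $a$---and your observation that the oriented (HOMFLY-PT) smoothing is available precisely at crossings whose strands have equal Maslov potential mod $2$, so that HOMFLY-PT sees $2$-graded rulings while Kauffman sees ungraded ones, is the correct mechanism. But as written the proposal has two genuine gaps, and they sit exactly where the content of the theorem lies.

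First, the ``state sum'' you invoke does not exist as described. The defining skein relations do not resolve a crossing into smoothings alone: expanding at a crossing produces the crossing-\emph{switched} term ($P_{L_-}$ in the HOMFLY-PT relation, and the analogous term in the Dubrovnik/Kauffman relation), which has the same number of crossings as the original diagram, so ``iterating over all crossings'' does not terminate in crossingless states. Worse, in a front projection the over/under data at a crossing is forced by the slopes of the strands, so the switched diagram is not the projection of any Legendrian front, and no Legendrian-specific degree bound can be applied to it. Showing that these switched terms cannot contribute to the coefficient of $a^{-\tb(L)-1}$---via degree bounds valid for arbitrary diagrams, or by organizing the resolution so that switched terms simplify---is a substantive part of the argument in \cite{R} and is entirely absent from your plan. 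Second, even granting a state sum over crossingless resolutions, your assertion that every state is a disjoint union of two-cusped eyes is false: the $L_\infty$-type smoothing creates a pair of cusps, so components of a resolution can have arbitrarily many cusps and can be nested. Determining which crossingless states realize the extremal $a$-degree, and showing that they are exactly those assembling into fixed-point-free involutions satisfying the normality condition while all other states contribute only to less extremal powers of $a$, is the heart of the theorem. You explicitly defer both points (``I would show\dots'', ``I expect the hard part to be\dots''), so what you have is a correct statement of strategy---essentially the one carried out in \cite{R}---rather than a proof.
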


\begin{remark}  An analogous but more complicated result holds for links in $J^1(S^1)$.  See \cite{R2,LR}.
\end{remark}

\subsection{Chekanov--Eliashberg differential graded algebra} \label{sec:CEDGA}

In this subsection we recall the definition of the Chekanov--Eliashberg
DGA associated to a Legendrian knot in $\R^3$,
with some adjustments (related to base points and commutativity) to
adapt the standard treatment to the needs of this paper.

For the purposes of defining the DGA, it is more convenient to work in
the Lagrangian ($xy$) projection than in the front ($xz$) projection
used in the first part of this section. One can use an elementary
construction called resolution \cite{NgCLI} to produce a Lagrangian
projection from a front projection: diagrammatically, smooth out all
left cusps, and replace right cusps by a loop with a negative crossing.

Let $K$ be an oriented Legendrian knot with a base point $*$, generic
in the sense that the Lagrangian projection $\pi_{xy}(K)$ is immersed with only
transverse double points as singularities, and $\pi_{xy}(*)$ lies away
from the double points. Contact homology associates to $(K,*)$ a
differential graded algebra $(\A(K,*),\partial)$, the
\textit{Chekanov--Eliashberg algebra}, as we now briefly recall; see
e.g. \cite{Ch,ENS} for more details.

\begin{definition}
Label the crossings (double points) of $\pi_{xy}(K)$ by
$a_1,\ldots,a_n$. The algebra $\A(K,*)$ is the associative, noncommutative unital
algebra over $\Z/2$ generated by
\[
a_1,\ldots,a_n,t,t^{-1}
\]
with no relations besides $t\cdot t^{-1} = t^{-1}\cdot t = 1$.
\end{definition}

The algebra $\A(K,*)$ is generated as a $(\Z/2)$-vector space by words of the form
\[
t^{\alpha_0} a_{i_0} t^{\alpha_1} a_{i_1} \cdots a_{i_k} t^{\alpha_k}
\]
(including the empty word, which serves as the identity element $1$),
with multiplication given by concatenation. We note that this
definition of $\A(K,*)$ is slightly different from the corresponding
definition in \cite{ENS}, even accounting for the fact that we work
over $\Z/2$ and not $\Z$: the algebra considered in \cite{ENS} is the
quotient of ours by allowing powers of $t$ to commute with
the $a_i$'s. This construction follows \cite{NgLSFT}; see
\cite[section~2.3.2]{EENS} for further discussion.

We next give $\A(K,\ast)$ a $\Z/(2r(K))$-grading (and thus a $(\Z/\p)$-grading
for any $\p \,|\,2r(K)$). If $a_i$ is a crossing of $\pi_{xy}(K)$, let
$\gamma_i \subset \R^2$ be a path along $\pi_{xy}(K)$ beginning at the
overcrossing and ending at the undercrossing of $a_i$. Let
$r(\gamma_i)$ be the (non-integral) number of counterclockwise
revolutions made by the unit tangent
vector to the path $\gamma_i$ from beginning to end, and define
$|a_i| = \lfloor 2r(\gamma_i) \rfloor$. Note that the
grading is well-defined, independent of the choice of $\gamma_i$,
modulo $2r(K)$. Extend the grading to all of $\A(K,\ast)$ by setting $|t| =
|t^{-1}| = 0$ and extending in the usual way (the degree of a product
is the sum of the degrees).

If $\pi_{xy}(K)$ is a
resolution of a front diagram endowed with a Maslov potential, then the degree
of crossing $a_i$ is the difference of the Maslov potentials
associated to the strands passing through $a_i$. We can also use this to define a grading on the DGA in the more general case where $K$ is a multi-component link.

We now come to the differential on $\A$. Attach signs to each corner
at every crossing in $\pi_{xy}(K)$ as
depicted: \raisebox{-11pt}{\includegraphics[height=33pt]{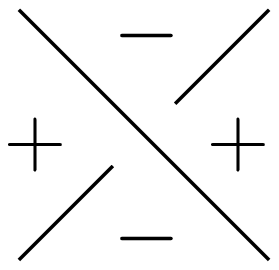}}.
Let $D^2$ denote the closed
disk. For $\ell\geq 0$, write $D^2_\ell = D^2\setminus\{r,s_1,\ldots,s_\ell\}$, where
$r,s_1,\ldots,s_\ell$ are points on $\partial D^2$ appearing in order as
we traverse the boundary counterclockwise.

\begin{definition}
Let $a,b_1,\ldots,b_\ell$ be crossings in the Lagrangian projection
$\pi_{xy}(K)$ of a Legendrian knot $K \subset \R^3$. Define
$\Delta(a;b_1,\ldots,b_\ell)$ to be the space of all
orientation-preserving immersions $f :\thinspace (D^2_\ell,\partial D^2_\ell)
\to (\R^2,\pi_{xy}(K))$, up to reparametrization, such that:
\begin{itemize}
\item
 $f$ sends
the boundary punctures of $D^2_\ell$ to the crossings of $\pi_{xy}(K)$
\item
$f$ sends a neighborhood of $r$ to a corner at $a$ labeled by a $+$
\item
$f$ sends a neighborhood of each $s_i$ to a corner at $b_i$ labeled by
a $-$.
\end{itemize}
\end{definition}

For $f \in \Delta(a;b_1,\ldots,b_\ell)$, the image of $\partial D^2_\ell$
maps to a union of $\ell+1$ paths $\gamma_0,\ldots,\gamma_\ell \subset
\pi_{xy}(K)$, where each path begins and ends at a crossing, and
$\gamma_0$ goes from $a$ to $b_1$, $\gamma_i$ from $b_i$ to $b_{i+1}$
for $1\leq i\leq \ell-1$, and $\gamma_\ell$ from $b_\ell$ to $a$. For each
of these paths $\gamma_i$, we can associate a monomial $w(\gamma_i)
\in \Z/2[t^{\pm 1}]$ by $w(\gamma_i) = t^{\alpha_i}$, where $\alpha_i$
is the number of times $\gamma_i$ passes through the base point
$\ast$, counted with sign according to the orientation of $K$.
Finally, we can associate a monomial $w(f) \in \A(K,*)$ to $f$, as
follows:
\[
w(f) = w(\gamma_0) b_1 w(\gamma_1) b_2 \cdots b_\ell w(\gamma_\ell).
\]

\begin{definition}
Let $a$ be a crossing of $K$.
\label{def:diff}
The differential $\partial(a)$ is
defined by:
\[
\partial(a) = \sum_{f\in\Delta(a;b_1,\ldots,b_\ell)} w(f),
\]
where the sum is over all $\ell\geq 0$ and all choices of crossings
$b_1,\ldots,b_\ell$ such that $\Delta(a;b_1,\ldots,b_\ell)$ is nonempty.
\end{definition}

We can extend the map $\partial$ to all of $\A(K,*)$ by
setting $\partial(t) = \partial(t^{-1}) = 0$ and imposing the Leibniz
rule.

\begin{theorem}[\cite{Ch,ENS}] \label{thm:DGAinv}
The map $\partial :\thinspace \A(K,*) \to \A(K,*)$ lowers degree by
$1$ and is a differential: $\partial^2 = 0$. Up to stable tame
isomorphism, the differential graded algebra $(\A(K,*),\partial)$ is
an invariant of $K$ under Legendrian isotopy (and choice of base point).
\end{theorem}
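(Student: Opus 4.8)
The statement bundles three assertions---the grading behavior of $\partial$, the identity $\partial^2 = 0$, and invariance of the stable tame isomorphism class---so the plan is to treat them separately, in increasing order of difficulty, and working throughout over $\Z/2$ so that no orientation bookkeeping on moduli spaces is needed.

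First I would dispatch the claim that $\partial$ lowers degree by one, which is purely index-theoretic. For a disk $f \in \Delta(a; b_1,\ldots,b_\ell)$ the monomial $w(f)$ has degree $\sum_i |b_i|$, since the powers of $t$ have degree zero, so I must show $|a| = 1 + \sum_i |b_i|$. Each $|a_j| = \lfloor 2r(\gamma_j)\rfloor$ is defined through the winding of the unit tangent along a capping path, and the boundary of the immersed disk $f$ furnishes a closed immersed loop whose total turning number is pinned by a Gauss--Bonnet / Whitney-degree computation. Tallying the single convex positive corner at $a$ against the $\ell$ negative corners at the $b_i$ then forces the index relation $|a| - \sum_i|b_i| = 1$; in fact the orientation-preserving immersion condition is exactly what rigidifies the disk and pins this index, so the relation holds for every disk counted by $\partial$.

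Next comes $\partial^2 = 0$, which is the heart of the matter and the step I expect to be the main obstacle. I would read $\partial^2(a)$ as a count of two-level broken configurations: a disk with positive corner $a$ and some negative corner $b_j$, glued at $b_j$ to a second disk carrying positive corner $b_j$. Such broken disks are precisely the ideal boundary points of the compactified one-dimensional moduli spaces $\overline{\Delta}(a;c_1,\ldots,c_m)$ of formal dimension one, i.e.\ those with $|a| - 1 - \sum_i|c_i| = 1$. The essential input is the compactness-and-gluing package: one must establish that these moduli spaces are compact $1$-manifolds with boundary, that their only boundary degenerations are the two-level splittings just described, and that gluing realizes a bijection between the terms of $\partial^2 a$ and the ends of these $1$-manifolds. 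Granting this, each word appears in $\partial^2 a$ with multiplicity equal to the (even) number of boundary points of a compact $1$-manifold, hence vanishes mod $2$. Following Chekanov, one may alternatively replace this analytic package by an explicit combinatorial degeneration argument, but the pairing structure---and the difficulty---is the same.

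Finally I would establish invariance. Since Legendrian isotopy is generated by planar isotopy together with the three Legendrian Reidemeister moves of Figure~\ref{fig:LReid}, it suffices to examine each move after translating it to the Lagrangian projection via resolution. Moves of triple-point/commutation type (Reidemeister III) fix the generating set and modify $\partial$ only by a tame automorphism, which I would verify by comparing the disks meeting the affected region before and after. A Reidemeister~II move instead births a canceling pair of crossings $e,e'$ with $\partial e = e' + (\text{terms not involving } e,e')$ and leaves all other disks essentially unchanged; this is exactly an algebraic stabilization, so the stable tame isomorphism class is preserved. For base-point independence, pushing $\ast$ across a single crossing conjugates the incident generators by $t^{\pm 1}$, again a tame automorphism, and composing such moves shows the DGA is independent of the placement of $\ast$ up to tame isomorphism. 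Each of these elementary modifications preserves the winding-number data defining the $|a_i|$, so the grading is respected throughout, and assembling the three parts yields the theorem.
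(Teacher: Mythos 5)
The paper does not prove this statement: Theorem~\ref{thm:DGAinv} is imported wholesale from Chekanov and Etnyre--Ng--Sabloff, with the additional caveat (noted in Section~2.6) that the DGA here has $t$ noncommuting with the $a_i$, so the versions actually matching this statement are those of \cite{NgLSFT} and \cite{EENS}. The comparison is therefore with those references, and your outline reproduces their architecture faithfully: the degree relation $|a| = 1 + \sum_i |b_i|$ via total turning of the boundary of an immersed polygon, $\partial^2 = 0$ by identifying its terms with the ends of compactified one-dimensional moduli spaces (or Chekanov's combinatorial substitute), and invariance by checking Reidemeister moves, with crossing births realized as algebraic stabilizations.

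Three points keep this a plan rather than a proof, and two of them are substantive inaccuracies. First, the hard steps---the compactness/gluing package and the move-by-move verification---are named but not executed, and they constitute essentially all of the content of the cited papers; this is unavoidable in an outline, but it should be said plainly. Second, your dictionary for invariance is too clean: after resolution, front move~I changes the number of crossings (a right-cusp loop is created or destroyed), so the front moves of Figure~\ref{fig:LReid} do not sort neatly into ``R3 gives a tame automorphism, R2 gives a stabilization''; each front move becomes a composite of Lagrangian-diagram moves, and one must also verify that the moves one needs in the Lagrangian picture are actually realizable by Legendrian isotopy (an issue the resolution procedure is designed to finesse, but which your sketch does not address). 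Third, pushing the base point across a crossing does not \emph{conjugate} the incident generator: as in the paper's own Theorem~\ref{prop:multi1}, it multiplies on one side only, $a_l \mapsto t^{-1}a_l$ or $a_l \mapsto a_l t$, and this distinction is not cosmetic here, precisely because $t$ does not commute with the $a_i$ in this version of the algebra. Relatedly, your remark that the orientation-preserving immersion condition ``rigidifies the disk and pins the index'' misdescribes the grading argument, which is a purely combinatorial Gauss--Bonnet count of corner angles for immersed polygons and involves no rigidity or Fredholm theory at all.
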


Here ``stable tame isomorphism'' is an equivalence relation of
differential graded algebras that in particular fixes $t$ and preserves isomorphism
type of the homology $H_*(\A(K,*),\partial)$; see \cite{Ch}, or
\cite{EENS} for a definition in our setting. For the purposes of this
paper, this relation may be treated as a black box.

In Section~\ref{sec:reps}, we will need a slight generalization of the
above notion of the Chekanov--Eliashberg DGA, to the setting where we
have multiple base points $*_1,\ldots,*_k$ on $K$. As before, we
assume that in the $xy$ projection, no base point coincides with a
crossing; we also assume that the base points are cyclically ordered
along $K$, i.e., $*_1,\ldots,*_k$ are encountered in that order as we
traverse the
knot in the direction of its orientation.

Given this data, we define the algebra $\A(K,*_1,\ldots,*_k)$ to be
the noncommutative unital algebra over $\Z/2$ generated by crossings
$a_1,\ldots,a_n$, along with $2k$ additional generators $t_1^{\pm
  1},\ldots,t_k^{\pm 1}$, with no relations besides $t_i \cdot
t_i^{-1} = t_i^{-1} \cdot t_i = 1$ for all $i$. (Note in particular
that the $t_i$'s do not commute with the $a$'s, or indeed with each other.)
We give $\A(K,*_1,\ldots,*_k)$ a $\Z/(2r(K))$-grading as before, with
$|t_i| = |t_i^{-1}| = 0$ for all $i$.

We can define a differential $\partial$ on $\A(K,*_1,\ldots,*_k)$
analogously to Definition~\ref{def:diff}. Note that in the presence of
multiple base points, the monomial $w(\gamma)$ associated to a path
$\gamma$ in $\pi_{xy}(K)$ can involve any or all of $t_1^{\pm
  1},\ldots,t_k^{\pm 1}$: it is the product $t_{i_1}^{\pm 1} \cdots
t_{i_l}^{\pm 1}$, where $\gamma$ passes through
$*_{i_1},\ldots,*_{i_l}$ in succession, and the signs depend on
whether the orientation of $\gamma$ agrees or disagrees with the
orientation of $K$ as $\gamma$ passes through the base point.

The DGA $(\A(K,*_1,\ldots,*_k),\partial)$ depends only minimally on
the choice of base points, and indeed contains no more information
than the single-base-pointed DGA $(\A(K,*),\partial)$. More precisely,
we have the following results.

\begin{theorem}\label{prop:multi1}  Let $*_1, \ldots, *_k$ and
  $*_1', \ldots, *_k'$ denote two collections of base points on $K$,
  each of which is cyclically ordered along $K$. Let
 $(\A(K, *_1, \ldots, *_k),\d)$ and $(\A(K, *_1', \ldots, *_k'),\d')$
 denote the corresponding multi-pointed DGAs.
Then there is a DGA isomorphism $\phi :\thinspace (\A(K, *_1, \ldots,
*_k),\d) \rightarrow
(\A(K, *_1', \ldots, *_k'),\d')$ such that $\phi(t_i) = t_i$ for all $i$.
\end{theorem}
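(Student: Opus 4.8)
The plan is to connect the two base-point configurations by a sequence of \emph{elementary moves}, in each of which a single base point is pushed across a single crossing, and to realize each such move by an explicit DGA isomorphism fixing all of the $t_i$.

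Since $*_1,\ldots,*_k$ and $*_1',\ldots,*_k'$ are both cyclically ordered with matching labels, the two $k$-tuples lie in the same connected component of the space of ordered configurations of $k$ distinct points on $K\cong S^1$; the components of this configuration space are exactly the cyclic orderings of the labels. I would therefore choose a path between the two configurations keeping the base points pairwise distinct, and make it generic, so that the only discrete events along it are instances where a single base point $*_i$ passes one of the two preimages of a single crossing $a_j$ of $\pi_{xy}(K)$ (in the Lagrangian projection there are no cusps to worry about). It then suffices to produce, for each elementary move, a DGA isomorphism fixing every $t_l$; the composite of these fixes all the $t_i$ and is the desired $\phi$.

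The essential simplification is that an elementary move leaves $\pi_{xy}(K)$ — and hence the crossings and all the moduli spaces $\Delta(a;b_1,\ldots,b_\ell)$ — completely unchanged; only the base-point locations move. Thus the old and new differentials $\partial$ and $\partial'$ count exactly the same disks, and for each disk $f$ the words $w(f)$ and $w'(f)$ differ only in powers of $t_i$ contributed by the boundary arcs incident to the preimage being crossed. Accordingly I would take the candidate isomorphism $\phi$ to be the identity on all generators except the crossing $a_j$ involved, and to send $a_j\mapsto t_i^{\epsilon}a_j$ (or $a_j\mapsto a_j t_i^{\epsilon}$), where the side and the sign $\epsilon\in\{+1,-1\}$ are read off from the orientation of $K$ and from which of the two strands through $a_j$ the base point crosses. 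As $|t_i|=0$, this $\phi$ preserves the grading; it is invertible since $t_i$ is a unit; and it plainly fixes every $t_l$.

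The substance of the argument — and the step I expect to be the main obstacle — is the local check that $\phi\circ\partial=\partial'\circ\phi$. One analyzes, corner by corner, how crossing $*_i$ past the chosen preimage alters the weight of each disk: arcs running straight through the crossing are unaffected, while each of the four quadrants at $a_j$ is bounded by one ray of the crossed strand and so contributes a single factor $t_i^{\pm1}$ adjacent to the corresponding occurrence of $a_j$. The delicate point is that $a_j$ enters the differential both as the positive (Reeb) corner in $\partial(a_j)$ and as a negative corner $b_m$ in $\partial(b)$ for other $b$, and one must verify that a \emph{single} one-sided multiplication by $t_i^{\epsilon}$ simultaneously absorbs the weight change in all of these quadrants; a short computation with the Reeb-sign conventions shows that the outgoing/incoming pattern of the crossed strand is uniform enough across the positive and negative quadrants for this to hold, so that conjugation is not needed. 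Once the correct $\epsilon$ and side are fixed, the identity $\phi(\partial a_j)=\partial'(\phi(a_j))$ follows termwise, and composing the elementary isomorphisms completes the proof.
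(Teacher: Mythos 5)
Your proposal is correct and follows essentially the same route as the paper's proof: reduce to elementary moves sliding one base point across one crossing, define $\phi$ to be the identity on all generators except the crossing involved, which is multiplied by $t_i^{\pm 1}$ on the left or right according to whether the base point lies on the overstrand or understrand, and verify $\phi\circ\partial = \partial'\circ\phi$ by tracking how the disk words $w(f)$ change at positive versus negative corners. The "delicate point" you flag is exactly the check the paper performs, and it goes through as you predict.
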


\begin{proof}
It suffices to establish the result when $(*_1,\ldots,*_k)$ and
$(*_1',\ldots,*_k')$ are identical except that for some $i$, $*_i'$ is
the result of sliding $*_i$ across a crossing of
$\pi_{xy}(K)$. Suppose then that $*_i$ and $*_i'$ lie on opposite
sides of a crossing $a_l$, with the orientation of $K$ pointing from
$*_i$ to $*_i'$. We first consider the case where the strand
containing $*_i$ and $*_i'$ is the overstrand at $a_l$. In this case,
if $f$ is a disk with a positive corner at $a_l$ and $w(f),w'(f)$ are the words
associated to $f$ in $\A(K,*_1,\ldots,*_j,\ldots,*_k)$ and
$\A(K,*_1,\ldots,*_j',\ldots,*_k)$ respectively, then $w'(f) =
t_i w(f)$. Furthermore, if $f$ is a disk with a negative corner at
$a_l$ , then $w'(f)$ is the result of replacing $a_l$ by $t_i^{-1} a_l$ in
$w(f)$. It follows that the map $\phi$ defined by
$\phi(a_l) = t_i^{-1} a_l$, $\phi(a_j) = a_j$ for $j\neq l$, and $\phi(t_j)
= t_j$ for all $j$ satisfies $\phi\circ\partial = \partial'\circ\phi$.

If the strand containing $*_i$ and $*_i'$ is the understrand at $a_l$,
a similar argument shows that the map $\phi$ defined by $\phi(a_l) =
a_l t_i$, $\phi(a_j) = a_j$ for $j\neq l$, and $\phi(t_j) = t_j$ for
all $j$ satisfies $\phi\circ\partial = \partial'\circ\phi$.
\end{proof}

\begin{theorem}\label{prop:multi2}
Let $*_1,\ldots,*_k$ be a cyclically ordered collection of base points
along $K$, and let $*$ be a single base point on $K$. Then there is a
DGA homomorphism $\phi :\thinspace (\A(K,*),\d) \to
(\A(K,*_1,\ldots,*_k),\d)$ such that $\phi\circ\d = \d\circ\phi$ and
$\phi(t) = t_1\cdots t_k$.
\end{theorem}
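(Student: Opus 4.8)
The plan is to reduce to the special case where the $k$ base points are clustered together at the location of the single base point $*$, and then invoke Theorem~\ref{prop:multi1} to move them into their actual positions. Concretely, I would first introduce an auxiliary collection $\bar{*}_1,\ldots,\bar{*}_k$ placed at (approximately) the location of $*$, appearing in the cyclic order $\bar{*}_1,\ldots,\bar{*}_k$ as we traverse $K$. Write $T = \bar{t}_1\cdots \bar{t}_k$ for the corresponding product in $\A(K,\bar{*}_1,\ldots,\bar{*}_k)$. Since $T$ is invertible with inverse $\bar{t}_k^{-1}\cdots \bar{t}_1^{-1}$, the assignment $\phi_0(a_i)=a_i$, $\phi_0(t)=T$, $\phi_0(t^{-1})=T^{-1}$ respects the sole relation $t\cdot t^{-1}=1$ and hence extends to an algebra homomorphism $\phi_0 :\thinspace \A(K,*)\to \A(K,\bar{*}_1,\ldots,\bar{*}_k)$. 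It preserves the $\Z/(2r(K))$-grading because $|t|=0=|T|$ and each crossing maps to itself with its degree unchanged.

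Next I would verify that $\phi_0$ is a chain map, i.e.\ $\phi_0\circ\d=\d\circ\phi_0$. Since both differentials are computed from the same collection of immersed disks, it suffices to check that for every disk $f$ the associated words satisfy $\phi_0(w(f))=w'(f)$, where $w(f)$ and $w'(f)$ denote the monomials computed in the single- and the clustered multi-base-pointed algebras respectively. This reduces to a claim about each boundary path $\gamma_i$: if $\gamma_i$ crosses the clustered location with signs $\epsilon_1,\ldots,\epsilon_m$ in the order encountered, then $w(\gamma_i)=t^{\epsilon_1+\cdots+\epsilon_m}$, while in the clustered algebra each positive (resp.\ negative) passage contributes $\bar{t}_1\cdots\bar{t}_k=T$ (resp.\ $\bar{t}_k^{-1}\cdots\bar{t}_1^{-1}=T^{-1}$), so that $w'(\gamma_i)=T^{\epsilon_1}\cdots T^{\epsilon_m}$.

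Here is the point I expect to be the crux of the argument, and which I would flag explicitly: although the generators $\bar{t}_i$ do not commute with one another, the factors $T^{\epsilon_j}$ are all powers of the single invertible element $T$, so they combine to $T^{\epsilon_1+\cdots+\epsilon_m}=\phi_0(t^{\epsilon_1+\cdots+\epsilon_m})=\phi_0(w(\gamma_i))$. It is essential that all $k$ base points sit at one common location, so that within a single boundary path no base-point data is interleaved between consecutive blocks $\bar{t}_1\cdots\bar{t}_k$; this is exactly what lets the non-commutative product telescope into a genuine power of $T$. Multiplying the contributions $w'(\gamma_0),b_1,w'(\gamma_1),\ldots,b_\ell,w'(\gamma_\ell)$ then yields $w'(f)=\phi_0(w(f))$, establishing that $\phi_0$ is a DGA homomorphism.

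Finally, to reach the prescribed configuration $*_1,\ldots,*_k$, I would apply Theorem~\ref{prop:multi1} to the two cyclically ordered collections $\bar{*}_1,\ldots,\bar{*}_k$ and $*_1,\ldots,*_k$, obtaining a DGA isomorphism $\psi :\thinspace \A(K,\bar{*}_1,\ldots,\bar{*}_k)\to \A(K,*_1,\ldots,*_k)$ with $\psi(\bar{t}_i)=t_i$ for all $i$. The composite $\phi=\psi\circ\phi_0$ is then a DGA homomorphism $\A(K,*)\to\A(K,*_1,\ldots,*_k)$ satisfying $\phi(t)=\psi(T)=\psi(\bar{t}_1\cdots\bar{t}_k)=t_1\cdots t_k$, as required. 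Note that $\psi$ need not fix the crossing generators, but this is immaterial since we only need $\phi$ to be a chain map with the prescribed value on $t$.
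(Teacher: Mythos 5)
Your proposal is correct and follows essentially the same route as the paper: the paper likewise uses Theorem~\ref{prop:multi1} to reduce to the case where all $k$ base points are clustered near $*$, and then defines $\phi$ by fixing the crossings and sending $t \mapsto t_1\cdots t_k$. The only difference is presentational (the paper reduces first and then defines the map, while you define the map into the clustered algebra and compose with the isomorphism afterward), and your explicit verification of the chain-map property — that clustering forces each passage to contribute a full block $T^{\pm 1}$, so the words telescope into powers of $T$ — is precisely the detail the paper leaves implicit.
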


\begin{proof}
By Theorem~\ref{prop:multi1}, we may assume that $*_1,\ldots,*_k$
all lie in a small neighborhood of $*$. In this case, the map $\phi$ defined
by $\phi(a_i) = a_i$ for all crossings $a_i$ and $\phi(t) = t_1\cdots
t_k$ is the desired homomorphism.
\end{proof}

\subsection{Augmentations and representations of the DGA} \label{ssec:rep}

Here we discuss representations of the DGA introduced in the previous
subsection. We begin with augmentations, which can be viewed as
$1$-dimensional representations.

\begin{definition}
Let $\p \,|\,2r(K)$.
A \textit{$\p$-graded augmentation} of $(\A(K,\ast),\partial)$ is an algebra
map $\epsilon :\thinspace \A(K,\ast)\to\Z/2$ such that:
\begin{itemize}
\item
$\epsilon(1) = \epsilon(t) = \epsilon(t^{-1}) = 1$;
\item$\epsilon \circ \d = 0$;
\item $\epsilon(a) = 0$ if $a\in\A$ with $|a| \not\equiv 0 \pmod{\p}$.
\end{itemize}
\end{definition}

Stable tame isomorphism (discussed briefly in the previous subsection)
preserves the existence and nonexistence of $\p$-graded
augmentations. Theorem~\ref{thm:DGAinv} then immediately implies the following.

\begin{theorem}
If $K$ and $K'$ are Legendrian isotopic knots with base points $*$ and
$*'$, then for any $\p \,|\,2r(K)$, the Chekanov--Eliashberg DGA
$(\A(K,*),\partial)$ has a $\p$-graded augmentation if and only if
$(\A(K',*'),\d)$ does.
\end{theorem}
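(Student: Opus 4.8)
The plan is to deduce the statement from Theorem~\ref{thm:DGAinv}, which asserts that the stable tame isomorphism class of $(\A(K,*),\partial)$ is a Legendrian isotopy invariant (including independence of the base point). Thus it suffices to show that the existence of a $\p$-graded augmentation is preserved under stable tame isomorphism. Recall that such an isomorphism is a composition of two kinds of moves: tame isomorphisms, which are in particular grading-preserving DGA isomorphisms that fix $t$, and algebraic stabilizations, which adjoin two new generators $e,e'$ with $|e| = |e'|+1$, $\partial e = e'$, and $\partial e' = 0$. I would verify that each move carries $\p$-graded augmentations to $\p$-graded augmentations in both directions.

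For the tame isomorphism case, suppose $\phi :\thinspace (\A,\partial) \to (\A',\partial')$ is a grading-preserving DGA isomorphism with $\phi(t) = t$, and let $\epsilon'$ be a $\p$-graded augmentation of $\A'$. Then $\epsilon := \epsilon' \circ \phi$ is a $\p$-graded augmentation of $\A$: it is an algebra map to $\Z/2$ as a composition of algebra maps; it satisfies $\epsilon \circ \partial = \epsilon' \circ (\phi \circ \partial) = \epsilon' \circ (\partial' \circ \phi) = (\epsilon' \circ \partial') \circ \phi = 0$; it sends $t$ and $t^{-1}$ to $1$ because $\phi$ fixes $t$ (hence $t^{-1}$) while $\epsilon'(t) = \epsilon'(t^{-1}) = 1$; and it vanishes on generators of nonzero degree mod $\p$ since $\phi$ preserves degree and $\epsilon'$ has this property. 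Applying the identical argument to $\phi^{-1}$ gives the converse, so $\A$ admits an augmentation if and only if $\A'$ does.

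For the stabilization case, write $\A' = \A\langle e,e'\rangle$ for the stabilized algebra, so that $\A$ sits inside $\A'$ as a sub-DGA with unchanged differential. If $\epsilon$ is a $\p$-graded augmentation of $\A$, I would extend it to $\A'$ by setting $\epsilon(e) = \epsilon(e') = 0$; this is a well-defined algebra map since there are no relations among the generators, and the only new conditions to check are $\epsilon(\partial e) = \epsilon(e') = 0$ and $\epsilon(\partial e') = 0$, both of which hold. Sending the new generators to $0$ is compatible with the grading requirement whatever the values of $|e|,|e'|$, since that requirement only constrains generators of nonzero degree. Conversely, the restriction to $\A$ of any augmentation $\epsilon'$ of $\A'$ is an augmentation of $\A$, because $\partial$ on $\A$ is unaffected by stabilization. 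Hence stabilization preserves augmentation existence in both directions as well.

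Combining the two cases shows that stable tame isomorphism preserves the existence of $\p$-graded augmentations, which together with Theorem~\ref{thm:DGAinv} proves the statement. I do not expect a genuine obstacle here; the only delicate point is the bookkeeping of the $\Z/(2r(K))$-grading and of the distinguished generator $t$, namely confirming that the constituent moves fix $t$ and respect the grading so that the conditions $\epsilon(t) = 1$ and the vanishing of $\epsilon$ on generators of nonzero degree are transported correctly. This is precisely the content folded into the notion of stable tame isomorphism recalled after Theorem~\ref{thm:DGAinv}, so nothing beyond the elementary verifications above is required.
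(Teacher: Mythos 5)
Your proposal is correct and follows essentially the same route as the paper: the paper derives the statement immediately from Theorem~\ref{thm:DGAinv} together with the fact that stable tame isomorphism preserves the existence of $\p$-graded augmentations, a fact it treats as a black box. Your write-up simply fills in the elementary verifications (tame isomorphisms fixing $t$ and preserving grading, and stabilizations in both directions) behind that black box, and those verifications are all sound.
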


There is a well-known correspondence between augmentations and
rulings:

\begin{theorem}[\cite{F,FI,Sabloff}] \label{thm:FIS}
Let $K$ be a Legendrian knot in $\R^3$, and let $\p \, | \,
2r(K)$. Then the front projection of $K$ has a $\p$-graded ruling if
and only if the DGA $(\A(K,*),\d)$ has a $\p$-graded augmentation.
\end{theorem}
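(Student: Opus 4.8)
The plan is to pass to the Lagrangian projection via the resolution of Section~\ref{sec:CEDGA}, so that the generators of $\A(K,*)$ are identified with the crossings of the front diagram together with its right cusps, and then to produce the two directions of the equivalence separately (this matches the split in the cited references: the forward direction is due to Fuchs, the converse to Fuchs--Ishkhanov and Sabloff). For the forward direction, given a $\p$-graded normal ruling $\rho$, I would define an algebra map $\epsilon:\A(K,*)\to\Z/2$ by $\epsilon(t)=\epsilon(t^{-1})=1$, by $\epsilon(a)=1$ when $a$ is a crossing switched by $\rho$, and by $\epsilon(a)=0$ on all remaining crossings and on every right-cusp generator. The grading hypothesis (Definition~\ref{def:PGNR}) forces the two strands meeting at a switch to carry equal Maslov potential, so a switched crossing has degree $0$ mod $\p$; hence $\epsilon$ vanishes on generators of nonzero degree and is a legitimate $\p$-graded candidate augmentation.

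The substance of the forward direction is to check $\epsilon\circ\partial=0$ on each generator. Since $\epsilon(t)=1$, the value $\epsilon(\partial a)$ equals, modulo $2$, the number of immersed disks $f$ with positive corner at $a$ all of whose negative corners lie at switches; call these the fully augmented disks. For an ordinary crossing $a$ the differential has no constant term, and I would construct a fixed-point-free involution on the fully augmented disks so that they cancel in pairs; the involution is produced by a local modification of each disk in a neighborhood where it meets the ruling near one of its corners, and the normality condition~(vi) is exactly what guarantees that this modification is everywhere defined and squares to the identity. For a right-cusp generator the differential instead carries a constant term $1$ coming from the cusp bigon, and the required count is odd; here the involution has a single fixed point, namely the disk recording the pairing of $\rho$ at that cusp, which accounts for the constant term.

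For the converse, which is the harder half, I would begin with a $\p$-graded augmentation $\epsilon$ and reduce to a diagram in a convenient normal form: introduce a sequence of ``dips'' (Reidemeister~II type modifications), producing a DGA that is stable tame isomorphic to $(\A(K,*),\partial)$, hence still admits a $\p$-graded augmentation, but whose differential is transparent enough that the augmented crossings assemble directly into the switches of a $\p$-graded normal ruling. One then reads off a ruling from the augmented normal-form diagram and transports it back across the isotopy, using that existence of a $\p$-graded ruling is a Legendrian isotopy invariant (via Theorem~\ref{thm:CP}) and that augmentation existence is preserved under stable tame isomorphism. I expect the main obstacle to be precisely this step: an arbitrary augmentation need not visibly resemble a ruling, so proving that \emph{some} augmentation forces the existence of a ruling requires controlling how the augmentation data behaves under the normal-form reduction (equivalently, under a gauge change of $\epsilon$) so that the normality condition~(vi) can be arranged to hold after the dips are installed. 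Throughout both directions I would keep track of Maslov potentials to ensure every construction is compatible mod $\p$, which is routine given the conventions fixed in Sections~\ref{ssec:potentials} and~\ref{sec:CEDGA}.
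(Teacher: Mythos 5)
The paper does not prove Theorem~\ref{thm:FIS} at all: it quotes it from \cite{F,FI,Sabloff}, and the statement it does establish (again essentially by citation) is the refinement Theorem~\ref{prop:bg}, via Sabloff's algorithm in one direction and Henry's Morse-complex-sequence machinery in the other. Measured against those arguments, your proposal has a genuine gap in the direction you treat as easy (ruling $\Rightarrow$ augmentation). You take $\epsilon$ to be the characteristic function of the switch set (with $\epsilon(t)=1$, $\epsilon=0$ on right-cusp generators) and delegate the entire verification of $\epsilon\circ\partial=0$ to an unconstructed ``fixed-point-free involution on fully augmented disks.'' No such involution is given, and the statement it would prove---that on the plain resolved diagram the switches of a normal ruling are exactly the augmented crossings of some augmentation---is precisely what fails in general; this is why every published proof of this implication modifies the diagram first. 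Fuchs \cite{F} inserts dips and must augment, besides the switches, additional crossings inside the dips that record the pairing of the ruling at each $x$-coordinate, and transporting that augmentation back through the stable tame isomorphism does not produce $\chi_{\mathrm{switches}}$. Internal to this paper, Theorem~\ref{prop:bg}(ii), proved via Henry's Theorem 6.20 and Corollary 6.29, only arranges that the \emph{first} augmented crossing agree with the \emph{first} switch; if your recipe were valid, the two sets would coincide outright and that machinery would be superfluous. Your proposed fixed point at a right cusp is also concretely wrong: the ``disk recording the pairing of $\rho$ at that cusp,'' i.e.\ the region between the two companion paths ending there, need not be a disk of the differential. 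Already for the maximal-$tb$ trefoil with the ruling whose only switch is the first crossing, the lower companion path is the upper strand of that switch, so the region between the companion paths has a \emph{concave} corner there and is not one of the immersed disks with convex corners that $\partial$ counts (the odd count in $\epsilon(\partial c)$ is produced by a different disk entirely).

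For the converse (augmentation $\Rightarrow$ ruling), your strategy---dips, transport of the augmentation across stable tame isomorphism, reading a ruling off the normal form, invariance via Theorem~\ref{thm:CP}---is indeed the approach of Fuchs--Ishkhanov and Sabloff. But as written it is a plan rather than a proof: you identify the crucial step, namely controlling the augmentation under the normal-form reduction so that the normality condition~(vi) can be arranged, as ``the main obstacle'' and do not resolve it. So neither implication is established; repairing the proposal would amount to reproducing the dip/MCS arguments of \cite{F,FI,Sabloff,Henry2011}, or, as the paper does, citing them.
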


We need the following more precise statement.  Recall that a Legendrian link in $\R^3$ is said to be in {\it plat position} if all right cusps have the same $x$-coordinate as do all left cusps.

\begin{theorem}[\cite{Sabloff, Henry2011}]  \label{prop:bg} Let $K$ be a $\p$-graded Legendrian link in
  $\R^3$ with front diagram in plat position, and denote by $(\A,\d)$
  the
Chekanov--Eliashberg DGA associated with the resolution of $K$.
\begin{itemize}
\item[(i)] Given any $\p$-graded augmentation of $(\A,\d)$, there exists a $\p$-graded normal ruling $\rho$ of $K$ so that the first switch of $\rho$ occurs at or to the right of the first augmented crossing.

\item[(ii)] For any $\p$-graded normal ruling $\rho$ of $K$ ,there is a $\p$-graded augmentation of $(\A,\d)$ so that the first augmented crossing agrees with the first switch
     of $\rho$.
\end{itemize}
\end{theorem}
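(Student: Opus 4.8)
The plan is to exploit the explicit, local nature of both the ruling decomposition and the differential $\d$ for the resolution of a plat front, and to prove the two implications by direct construction while tracking the $x$-coordinate at which the first switch, respectively the first augmented crossing, occurs. Recall that in the resolution of a plat diagram the algebra generators fall into two families: the crossings coming from genuine crossings of the front (the \emph{interior} crossings), which occur strictly between the leftmost and rightmost $x$-coordinates, and the crossings created at the right cusps (where resolution replaces a cusp by a loop with a negative crossing), all of which occur at the rightmost $x$-coordinate. A normal ruling $\rho$ determines at each interior crossing whether $\rho$ switches there, and at each right cusp it necessarily pairs the two incident branches. Throughout, order crossings by $x$-coordinate.

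For part (ii), given a normal ruling $\rho$ I would set $\epsilon_\rho(a) = 1$ exactly when the interior crossing $a$ is a switch of $\rho$, extend the definition to the right-cusp crossings as forced by the closure of the ruling (these all sit at the rightmost $x$-coordinate), and put $\epsilon_\rho(t^{\pm 1}) = 1$. Since every switch is an interior crossing and the cusp crossings lie to the right of all of them, the leftmost augmented generator is precisely the first switch of $\rho$, giving the asserted agreement; the $\p$-graded hypothesis on $\rho$ forces each switch to have degree $0 \bmod \p$, so $\epsilon_\rho$ respects the grading. The real content is the verification that $\epsilon_\rho \circ \d = 0$. A word in $\d a$ survives under $\epsilon_\rho$ only if all of its negative corners sit at switches or cusp crossings, so $\epsilon_\rho(\d a)$ counts, mod $2$, the immersed disks with a positive corner at $a$ whose every other corner is ruled. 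I would establish that such disks come in canceling pairs by exhibiting a fixed-point-free involution that modifies a disk near a switch (swapping which of the two ruling strands the disk boundary follows); the normality condition of Definition~\ref{def:NR}(vi) is exactly what guarantees that this local surgery again produces an admissible ruled disk and that the involution is well defined.

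For part (i), given a $\p$-graded augmentation $\epsilon$ I would build a compatible ruling by sweeping the front from left to right and maintaining a pairing of the strands present at the current $x$-coordinate. At each interior crossing I would declare a switch only when $\epsilon$ is nonzero there and the current pairing makes a switch consistent with normality, propagating the pairing across non-switch crossings by following the diagram and closing up each pair at the appropriate right cusp. Because $\epsilon$ vanishes on every crossing to the left of the first augmented crossing, the construction introduces no switch before that point, so the first switch of the resulting $\rho$ occurs at or to the right of the first augmented crossing; the grading condition on $\epsilon$ guarantees the resulting ruling is $\p$-graded.

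The main obstacle lies in the two verifications of global consistency. In part (ii) it is the disk-counting argument: one must check, by a finite but genuinely case-by-case analysis of the possible shapes of admissible disks near a switch, that the pairing involution is defined and fixed-point-free, and here the precise form of the normality condition is indispensable. In part (i) the difficulty is dual: one must show that the locally made switch/no-switch choices can always be carried out so that the resulting global involution closes up into a legitimate normal ruling --- equivalently, that the obstruction to extending the pairing past each crossing is measured by $\epsilon \circ \d$, which vanishes by hypothesis. These are exactly the technical cores of the arguments of Fuchs, Fuchs--Ishkhanov, and Sabloff, and the added refinement here is only the bookkeeping of $x$-coordinates, which follows for free from the explicit constructions above.
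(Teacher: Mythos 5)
Your part (ii) rests on a claim that is false, and this is a genuine gap rather than a fixable omission: the function $\epsilon_\rho$ that augments exactly the switches of a normal ruling $\rho$ is \emph{not} in general an augmentation, so the verification $\epsilon_\rho\circ\partial=0$ that you propose to carry out by a normality-based disk involution is a verification of a false statement, and no such involution can exist. Here is a concrete counterexample. Let $K$ be the Legendrian Hopf link whose plat front has left cusps joining strands $(1,2)$ and $(3,4)$, two crossings $q_1,q_2$ both between the two middle strands, and right cusps joining $(1,2)$ and $(3,4)$. In the resolution DGA one has $\partial q_1=\partial q_2=0$ and $\partial c_1=q_1q_2$ for the upper right-cusp generator $c_1$: there are exactly three disks with positive corner at $c_1$ --- the invisible disk (word $1$), the disk whose lower boundary turns a corner at both crossings (word $q_1q_2$), and the disk whose lower boundary passes straight through both crossings and closes at the upper left cusp (word $1$) --- and the two constant disks cancel mod $2$. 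Now let $\rho$ be the involution pairing positions $(1,2)$ and $(3,4)$ throughout, with switches at both $q_1$ and $q_2$; this is a normal ruling (at each switch the intervals $[1,2]$ and $[3,4]$ are disjoint, so condition (vi) of Definition~\ref{def:NR} holds). Your augmentation has $\epsilon_\rho(q_1)=\epsilon_\rho(q_2)=1$, hence $\epsilon_\rho(\partial c_1)=1\neq 0$. Equivalently: all three disks above have every negative corner at a switch, and an odd number of disks cannot be paired off by any fixed-point-free involution. The theorem is still true for this $\rho$, but the augmentation realizing it is $\epsilon(q_1)=1$, $\epsilon(q_2)=0$, which agrees with the switch set \emph{only at the first switch}. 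This is precisely why statement (ii) asserts agreement only of the first switch with the first augmented crossing, and why the paper proves (ii) by citing Henry's Morse-complex-sequence machinery \cite{Henry2011} --- the $S\bar{R}$-form MCS of $\rho$ is swept into $A$-form, a process that creates and destroys handleslides everywhere to the right of the first switch --- rather than by augmenting the switches; it is also why Fuchs--Ishkhanov and Sabloff prove the ruling-to-augmentation direction on dipped diagrams rather than on the given front. (The failure is not special to links: for the all-switch normal ruling of the standard maximal-$\tb$ Legendrian $T(2,5)$ torus knot, $\partial$ of a right cusp has nine ruled terms, again an odd number.)

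Your part (i), by contrast, is the same route the paper takes --- it cites Sabloff's left-to-right algorithm \cite{Sabloff}, also presented in \cite{NgSabloff}, which is essentially the sweep you describe --- but as written it is only a sketch: the statement that switch/no-switch choices ``can always be carried out'' so that the pairing closes up into a normal ruling is the entire content of that algorithm, and it is established not by the vague remark that the obstruction ``is measured by $\epsilon\circ\partial$,'' but by inductively constructing a chain complex over each vertical line of the front from $\epsilon$-weighted disk counts and reading the ruling off from those complexes. Note also that in Sabloff's construction the switches of the resulting ruling need not be augmented crossings at all; only the absence of switches to the left of the first augmented crossing comes for free.
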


\begin{proof}  Statement (i) follows from an algorithm in \cite[section 3.3]{Sabloff} that assigns a normal ruling to an augmentation of a plat position front diagram.  This algorithm is also presented in \cite{NgSabloff}, and it is easy to see that the first switch of the normal ruling must be to the right of the first augmented crossing.

For (ii), we cite work of Henry \cite{Henry2011}.  The main objects of study in \cite{Henry2011} are ``Morse complex sequences,'' which consist of sequences of chain complexes assigned  to a front diagram of a Legendrian link in $\R^3$.  In \cite[section 6.5]{Henry2011}, two different standard forms for a Morse complex sequence (MCS) are introduced, the $S\bar{R}$-form and the $A$-form.  These standard forms are related to normal rulings and augmentations respectively.

Given a normal ruling $\rho$ as in (ii), we consider the $S\bar{R}$-form MCS associated with $\rho$ where none of the returns have handleslides.    Theorem 6.20 of \cite{Henry2011} shows that this MCS may be transformed into an $A$-form MCS by an algorithm that sweeps handleslide marks from left to right.  In particular, the leftmost handleslide in this $A$-form MCS will be located directly to the left of the first switch of $\rho$.  According to Corollary 6.29 of \cite{Henry2011}, this $A$-form MCS corresponds to an augmentation of the Chekanov--Eliashberg algebra of the resolution of $K$ where a crossing is augmented if and only if there is a handleslide immediately to the left of the crossing.  This augmentation has the desired form.
\end{proof}

We now generalize our discussion from augmentations to representations
of the DGA.
Suppose $V$ is a finite-dimensional vector space over $\Z/2$ with a $\Z/\p$-grading, $\displaystyle V = \bigoplus_{k \in \Z/\p} V_k$.  Then $\displaystyle \mathit{End}(V) = \bigoplus_{i,j} \mathit{Hom}_{\Z/2}(V_i, V_j)$ is a $\Z/\p$-graded algebra where we take each  $\mathit{Hom}_{\Z/2}(V_i, V_j)$ to be homogeneous of degree\footnote{ Note that with this convention, our definition of degree is the \textit{negative} of the standard grading for graded linear maps.}   $i -j \in \Z/\p$.

\begin{definition} \label{def:rep}
A \textit{$\p$-graded representation} of $(\A,\d)$ is a $\p$-graded vector space $V$ over $\Z/2$
along with a DGA map from $(\A,\d)$ to $(\End(V),0)$, i.e., a grading-preserving algebra map
$f :\thinspace\A\rightarrow \End(V)$ satisfying $f(1) = \mathit{id}_V$ and $f \circ \d = 0$.  The {\it graded dimension} of the representation, $\dim(V)$, is the function
${\bf n}: \Z/\p \rightarrow \Z_{\geq 0}$ defined by ${\bf n}(k) = \dim V_k.$
\end{definition}

Note that a representation of $(\A(K,*),\d)$ does not need to send $t$
to $\mathit{id}_V$, but merely to an invertible map on $V$.

\section{Normal Rulings of Legendrian Satellites}
\label{sec:rulings}

We begin this section by establishing some basic properties of normal rulings of Legendrian satellites. The definition of normal ruling requires working with a front projection of $S(K,L)$ with the property that crossings have
distinct $x$-coordinates.  To achieve this, we assume that $(K,*)$ and $L$ are in general position, and then apply a planar isotopy to perturb the $x$-coordinates of the crossings of the front diagram for $S(K,L)$ described in Section \ref{sub:satellite}.  The precise order that the crossings end up in will not be relevant for our arguments.  In this section, we continue to use the convention of labeling the parallel translates of a strand of $K$ appearing in $S(K,L)$ from $1$ to $n$ with decreasing $z$-coordinate.

We introduce some terminology associated with a normal ruling $\rho$ of $S(K,L)$.  Outside of neighborhoods of cusps, crossings, and the base point we can assign an involution, $\rho_T$, of $\{1, \ldots, n\}$ to each point $ k_0 \in K$ according to:
\begin{itemize}
\item[(i)] Let $\rho_T(i) = j$ if, of the $n$ parallel copies of $K$ in $S(K,L)$ that correspond to the point $k_0$, the ruling $\rho$ pairs the $i$-th strand with the $j$-th strands, and

\item[(ii)] let $\rho_T(i) = i$ if $\rho$ pairs the $i$-th strand of $S(K,L)$ at $k_0$ with a strand of $S(K,L)$ corresponding to a point other than $k_0$ in $K$.
\end{itemize}
We will refer to $\rho_T$ as the {\it thin part} of $\rho$ at $k_0$.  In addition, we refer to strands of $S(K,L)$ that correspond to the same strand of $K$ and are paired by $\rho$ as a pair of {\it thin strands} of $\rho$.

In combination, the following two lemmas show that the thin part of $\rho$ is independent of $k_0 \in K$.

\begin{lemma}[Crossing Lemma]  A pair of thin strands cannot be involved in any switches at crossings of $S(K,L)$ that correspond to a crossing of $K$.
\end{lemma}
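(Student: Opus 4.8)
The plan is to work entirely inside the block of $n^2$ crossings of $S(K,L)$ produced by a single crossing of $K$. First I would record the combinatorial shape of this block: if the crossing of $K$ involves the arcs $\alpha$ and $\beta$, then the block is a complete transposition of the $n$ parallel copies $\alpha_1,\dots,\alpha_n$ of $\alpha$ past the $n$ copies $\beta_1,\dots,\beta_n$ of $\beta$. Its two defining features are that the $\alpha$-copies keep their vertical order throughout (as do the $\beta$-copies), since there are no $\alpha$--$\alpha$ or $\beta$--$\beta$ crossings, and that each $\alpha_s$ crosses each $\beta_t$ exactly once. A pair of thin strands here is a pair $\{\alpha_s,\alpha_{s'}\}$ (or $\{\beta_t,\beta_{t'}\}$) paired by $\rho$, and being ``involved in a switch'' means one of them is a switching strand at some crossing of the block. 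Since every crossing of the block is between an $\alpha$-copy and a $\beta$-copy, the statement is equivalent to: at every switch $\alpha_s\times\beta_t$ of the block, the partner $\rho(\alpha_s)$ is a $\beta$-copy and $\rho(\beta_t)$ is an $\alpha$-copy.

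Next I would isolate the two properties of $\rho$ that actually constrain the block. Away from switches the involution pairs the same strands on both sides of a crossing, so a transverse (non-switch) crossing leaves the matching of strands unchanged, merely permuting their heights; a switch at $\alpha_s\times\beta_t$ instead exchanges the partners of the two crossing strands, replacing the pairs $\{\alpha_s,\rho(\alpha_s)\}$ and $\{\beta_t,\rho(\beta_t)\}$ by $\{\alpha_s,\rho(\beta_t)\}$ and $\{\beta_t,\rho(\alpha_s)\}$. The only genuine restrictions are condition (iv), which forbids the two strands at any crossing from being paired, and the normality condition (vi) at each switch.

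Assuming for contradiction that some switch $c=\alpha_s\times\beta_t$ has $\alpha_s$ paired with another $\alpha$-copy $\alpha_{s'}$ just before $c$, I would analyze $c$ as follows. The switching strands $\alpha_s,\beta_t$ are vertically adjacent at $c$, with $\beta_t$ arriving from below. Applying the normality condition (vi) to the companion intervals $[\alpha_s,\alpha_{s'}]$ and $[\beta_t,\rho(\beta_t)]$ pins the heights of $\alpha_{s'}$ and $\rho(\beta_t)$ relative to $c$ down to a short list of cases. In each case one checks, using that the $\alpha$- and $\beta$-copies preserve their relative orders, that after the switch at least one of the newly created mixed pairs $\{\alpha_s,\rho(\beta_t)\}$, $\{\beta_t,\alpha_{s'}\}$ consists of an $\alpha$-copy and a $\beta$-copy that have not yet crossed. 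Because each $\alpha$-copy crosses each $\beta$-copy exactly once, this pair is forced to cross at some crossing lying to the right of $c$.

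The hard part is to convert this ``forced future crossing'' into an actual violation of condition (iv), that is, to rule out the possibility that an intervening switch breaks the offending pair before its two strands meet. The plan is to handle this by an extremal choice: among all switches whose partner-exchange produces an as-yet-uncrossed mixed pair, select one for which the forced crossing is leftmost, and then show that rescuing that pair would itself require a switch producing a still-earlier forced crossing, contradicting the choice. I expect this propagation control --- tracking how partners evolve through the intermediate switches while exploiting the rigid block ordering --- to be the main obstacle, with the case analysis at the single switch being routine once (vi) is invoked. An appealing alternative, which may avoid the bookkeeping, is to argue via the section decomposition of Remark~\ref{rem:NRDecomp}: the thin pair is a pair of sections cobounding a thin strip that follows the arc $\alpha$, and a switch would force one of these sections to follow a copy of $\beta$, which is incompatible with the requirement that paired sections terminate together at common cusps of $S(K,L)$.
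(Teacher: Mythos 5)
Your combinatorial setup of the block (the partner-exchange rule at a switch, the order-preservation of the $\alpha$- and $\beta$-copies, each $\alpha_i$ crossing each $\beta_j$ exactly once) is correct, but the proof has a genuine gap, and the step you call ``routine'' contains a false claim. Consider a switch $c$ between $\alpha_s$ and $\beta_t$, where just left of $c$ the strand $\alpha_s$ lies directly above $\beta_t$ and is paired with $\alpha_{s'}$, and suppose $\alpha_{s'}$ lies \emph{below} $\beta_t$. Then $[\alpha_{s'},\alpha_s]$ contains $\beta_t$, so condition (vi) of Definition~\ref{def:NR} forces the nested configuration: $Z=\rho(\beta_t)$ lies strictly between $\alpha_{s'}$ and $\beta_t$. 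Nothing prevents $Z$ from being another $\alpha$-copy, or a strand of $S(K,L)$ not in the block at all (companions live on the whole vertical line, not just among the $2n$ block strands --- this also makes your opening ``equivalent'' reformulation too strong, since $\rho(\alpha_s)$ need not be a $\beta$-copy). In this configuration the two pairs created by the switch are $\{\beta_t,\alpha_{s'}\}$, which has \emph{already} crossed, and $\{\alpha_s,Z\}$, which is either a brand-new thin pair or a pair that never crosses inside the block; so your claim that at least one new pair is an uncrossed $\alpha$--$\beta$ pair fails, and with it the extremal scheme, since there is no ``forced crossing'' to be leftmost. What this case actually produces is a smaller instance of the original problem --- a thin-type pair switching at $c$ with strictly fewer strands between the switching strand and its companion --- which is precisely why the paper organizes the proof as an induction on the number of strands between the two companion paths at the switch: the normality condition kills the base case $M=1$, and the two inductive cases (corner pointing toward or away from the companion) reduce every switch either directly, or after a forced second switch, to a switch with smaller $M$. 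The ``propagation control'' you defer is exactly this induction, and it cannot be replaced by the leftmost-forced-crossing bookkeeping, because the problematic case generates no forced crossing at all.

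Your fallback argument via the section decomposition of Remark~\ref{rem:NRDecomp} is also not a proof. There is no requirement that paired sections of a ruling of $S(K,L)$ follow parallel copies of the same strand of $K$, and a switch sending a thin section onto a copy of $\beta$ is not by itself incompatible with the two sections later terminating at common cusps: the Cusp Lemma shows that thin pairs \emph{do} switch at the crossings of $S(K,L)$ coming from cusps of $K$, and ruling sections of a satellite generally migrate between copies. The ``incompatibility'' you assert is exactly the statement that needs to be proven, so this route is circular as described.
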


\begin{proof}   Let $Y$ denote the subset of the front diagram $S(K,L)$ corresponding to a neighborhood of a crossing $q$ of $K$.  We prove the more general statement:

\noindent{\it Let $P$ and $Q$ be a pair of companion paths of the ruling.  If both, $P$ and $Q$ pass through the region $Y$, then neither of them can  switch in $Y$.}

The proof is by induction on, $M$, the number of strands lying in the vertical interval between $P$ and $Q$ at such a switch.  The base case of $M = 1$ is prohibited by the normality condition.  For the inductive step, by symmetry we may assume that $P$ lies above $Q$ and such a switch occurs along $P$.

\begin{itemize}
\item[Case 1:]  The corner along $P$ at the switch points toward $Q$. Then by the normality condition, the other path at the switch must have its companion path between $P$ and $Q$.  The inductive hypothesis then provides a contradiction.

\item[Case 2:]  The corner along $P$ at the switch points away from $Q$. Then heading in the appropriate direction, (left or right, depending on the slope of $Q$ at the switch), $P$ and $Q$ will be on course to intersect within $Y$.  This is prohibited, so there must be another switch along either $P$ or $Q$.  This next switch will be of the type covered by Case 1, and, prior to this switch, the number of strands between $P$ and $Q$ can only decrease since they are angled toward one another in this direction.  Thus, the inductive hypothesis may be applied.
\end{itemize}
\end{proof}

\begin{lemma}[Cusp Lemma]  The thin part of $\rho$ does not change when passing a cusp.  Furthermore, at those crossings of $S(K,L)$ corresponding to a particular cusp of $K$ the crossing between the $i$-th strand and the $j$-th strand is a switch if and only if $\rho_T(i)= j$ before and after the cusp.
\end{lemma}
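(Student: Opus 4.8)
The plan is to localize the entire analysis to a single standard tangle. Since the statement concerns only the crossings and cusps of $S(K,L)$ arising from one fixed cusp of $K$, I would first record the explicit form of this portion of the front. Treating a right cusp of $K$ (the left cusp being its mirror image), the $n$-copy produces $2n$ strands entering from the left, which in our top-to-bottom labeling are $u_1,\ldots,u_n$ (the copies of the upper branch) followed by $\ell_1,\ldots,\ell_n$ (the copies of the lower branch); to the right these close up into $n$ right cusps, the $i$-th joining $u_i$ to $\ell_i$. A direct computation with the pushed-off copies shows that the only crossings are $u_i\times\ell_j$ for $i>j$, so that there is exactly one crossing between any two copies $i$ and $j$ (namely $u_{\max(i,j)}\times\ell_{\min(i,j)}$), and that the strands emerge just to the left of the cusps in the order $u_1,\ell_1,u_2,\ell_2,\ldots,u_n,\ell_n$. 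This makes precise the phrase ``the crossing between the $i$-th strand and the $j$-th strand.''

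Next I would exploit the cusp condition (iii): at the cusp of copy $i$ the ruling pairs the upper and lower branches, so each of the $n$ cusps caps off exactly one pair of sections. Tracing the two sections meeting at the $i$-th cusp leftward through the crossings (following the diagram at ordinary crossings and turning a corner at switches) records, at the far left, a pair among the $2n$ entering strands; as $i$ ranges over $1,\ldots,n$ these pairs exhaust all $2n$ strands and constitute exactly the involution $\rho|_{x_*}$ determined by $\rho$ just to the left of the tangle. Thus the content of the lemma is precisely to compute this matching in terms of the switch pattern of $\rho$ on the tangle.

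The heart of the argument is then to show, by induction on $n$, that the switches must correspond to a partial matching $\sigma$ (an involution) of the set of copies $\{1,\ldots,n\}$: a fixed copy $i$ contributes the pair $u_i\leftrightarrow\ell_i$ (so $\rho_T(i)=i$ on both branches), while a matched pair $\{i,j\}$ produces the pairs $u_i\leftrightarrow u_j$ and $\ell_i\leftrightarrow\ell_j$ (so $\rho_T(i)=j$ on both branches) together with a single switch at the crossing $u_{\max(i,j)}\times\ell_{\min(i,j)}$. For the inductive step I would peel off the topmost strand $u_1$, which meets no crossing and hence is a straight section; its companion section caps with it at the cusp of copy $1$, and following that companion leftward determines $\sigma(1)$. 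Removing copies $1$ and $\sigma(1)$ leaves a tangle of the same type on the remaining copies, to which the inductive hypothesis applies. Granting this structure, Part 1 (equality of the thin parts before and after the cusp, both equal to $\sigma$) and Part 2 (the switch criterion $\rho_T(i)=j$) are both read off from the computed matching.

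I expect the main obstacle to be the inductive step, specifically using the normality condition to rule out ``illegal'' switch patterns --- most importantly, to show that no single copy can switch more than once and that the switches genuinely decouple into the matched pair plus a smaller cusp tangle. This is where conditions (i), (iv), and (vi) must be invoked carefully: the fixed-point-free and no-pairing-at-a-crossing requirements constrain which sections may meet at a given crossing, while the normality condition (nesting or disjointness of the two companion intervals at each switch) is what forbids a section from bouncing at two crossings associated to the same copy. Tracking the nesting of these companion intervals through the interleaved crossings near the cusp is the delicate bookkeeping the proof must manage.
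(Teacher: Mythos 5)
Your reduction to the local tangle is correct: the crossings are exactly $u_{\max(i,j)}\times\ell_{\min(i,j)}$, the strands pair off adjacently just left of the cusps, $u_1$ meets no crossings, and the partial-matching structure you describe is indeed equivalent to the lemma. But there is a genuine gap, and it sits exactly where you place it: the exclusion of ``illegal'' switch patterns is not bookkeeping to be managed inside an otherwise-working induction --- it \emph{is} the lemma, and your argument never does it. Concretely, in your inductive step with $\sigma(1)=j$, the lower-corner section at the switch $u_j\times\ell_1$ still runs leftward along $\ell_1$ through the crossings $\ell_1\times u_m$ ($m>j$) and rightward along $u_j$ through $u_j\times\ell_m$ ($2\le m\le j-1$), and the section entering cusp $j$ from below runs along $\ell_j$ through $\ell_j\times u_m$ ($m>j$); a priori any of these may switch again, in which case removing copies $1$ and $\sigma(1)$ does \emph{not} leave a smaller tangle of the same type and the induction does not close. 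For instance, with $n=3$, switches at $u_2\times\ell_1$ and $u_3\times\ell_1$ (and nowhere else) produce left-hand pairings $u_1\leftrightarrow u_2$, $u_3\leftrightarrow\ell_2$, $\ell_1\leftrightarrow\ell_3$, so the thin parts on the two branches would disagree. This configuration is in fact killed by normality --- just to the left of the switch $u_3\times\ell_1$, the vertical interval joining $u_3$ to its companion on $\ell_2$ and the interval joining $\ell_1$ to its companion on $\ell_3$ overlap without nesting --- but verifying this for arbitrarily nested companions is the entire content of the proof, and ``granting this structure,'' as you put it, assumes the conclusion.

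The paper supplies precisely the missing ingredient as a standalone Claim: among the crossings near the cusp, none of the upper paths $P_i$ emanating from the cusps can have a switch where its slope increases, and none of the lower paths $Q_i$ a switch where its slope decreases. The Claim is proved by induction not on the number of copies but on the number of strands lying between the offending path and its companion at the putative switch: normality forces the companion of the other switching strand into that region, and then one of the two trapped paths must switch again with strictly fewer strands separating it from its companion, contradicting minimality. Once the Claim is in hand, each cusp pair $P_i,Q_i$ has at most one switch combined, and both the constancy of the thin part across the cusp and the switch criterion follow in a few lines --- essentially your ``reading off the matching.'' If you wish to keep your peel-off-the-top-copy induction, you must first prove an analogue of this Claim (for example, that no section can switch twice among the crossings associated to the cusp); your outline correctly identifies that statement but does not establish it.
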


\begin{proof}  By symmetry we may consider the case of a left cusp of $K$.  Let $C_1, \ldots, C_n$ denote the corresponding left cusps of $S(K,L)$ numbered from top to bottom.  The ruling $\rho$ provides a pair of paths $P_i$ and $Q_i$ emanating from each of the $C_i$ where we assume $P_i$ to have larger $z$-coordinate than $Q_i$.  Although, it need not literally be the case, we will refer to those strands of $S(K,L)$ which correspond to the upper (resp. lower) branches of cusps as positive (resp. negative) sloped.  (Compare Figure \ref{fig:ThinCusp}.)

\noindent {\bf Claim:}  {\it Among those crossings near the cusp of $K$, none of the $P_i$ (resp. $Q_i$) can have a switch where the slope increases (resp. decreases).}

This is proved by induction on, $M$, the number of strands lying between $P_i$ and $Q_i$ at the offending switch $S$.  The base case $M=1$ would violate the normality condition.  By symmetry we may assume $S$ is a switch along $P_i$.  The normality condition forces that there is some $j \neq i$ such that (i) the lower half of $S$ is an arc along $P_j$, and (ii) $Q_j$ lies above $Q_i$ at $x(S)$.

\begin{itemize}
\item[Case 1:]  $Q_j$ is positively sloped at $x(S)$. Then immediately to the right of $S$, $P_j$ is negatively sloped and $Q_j$ is positively sloped.  To the right of $S$, one of these paths must switch or they will intersect before leaving the nearby collection of crossings.  However, as we move to the right of $S$ the number of strands between $P_j$ and $Q_j$ can only decrease, therefore this contradicts the inductive hypothesis.

\item[Case 2:]  $Q_j$ is negatively sloped at $x(S)$. This time follow $P_j$ and $Q_j$ to the left of $S$.  Since they lie between $P_i$ and $Q_i$ at $x(S)$, and because of the way they are sloped, one of $P_j$ and $Q_j$ must switch in order for them to meet at the cusp $C_j$.  However, the number of strands between $P_j$ and $Q_j$ decreases when moving to the left, and thus the inductive hypothesis applies to provide a contradiction at the first such switch.
\end{itemize}

Finally, we deduce the result from the Claim.  Near the cusp, each pair of paths $P_i$ and $Q_i$ can have at most one switch combined.  Indeed, the claim forbids both $P_i$ and $Q_i$ from individually having more than one switch, and if both $P_i$ and $Q_i$ have a single switch then they will intersect before leaving the collection of crossings near the cusp.  Now, in the case that there is no switch along $P_i$ or $Q_i$, the $i$-th strand will be a fixed point of $\rho_T$ along either branch of the cusp.  Any switches that do occur must have upper strand $Q_i$ and lower strand $P_j$ with $i <j$, and such a switch results in $\rho_T(i) =j$ before and after the crossing.  Conversely, if $\rho_T(i) =j$ before or after the crossing, then a switch of this type is necessary in order for the corresponding thin strands to meet at a common cusp.
\end{proof}

\subsection{Reduced normal rulings}

\begin{definition}  For a non-empty pattern $L \subset J^1(S^1)$, we say that a normal ruling $\rho \in \mathcal{R}^\p(S(K,L))$ is {\it reduced} if
\begin{itemize}
\item[(i)] the front projection of $L$ intersects $x =0$ and
\item[(ii)] there are no switches at the crossings of $S(K,L)$ which arise from left cusps of $K$.
\end{itemize}
Denote by $\widetilde{\mathcal{R}}^\p(K,L)$ the set of reduced $\p$-graded normal rulings of $S(K,L)$ and by $\widetilde{R}^\p_{S(K,L)}(z)$ the corresponding {\it reduced ruling polynomial}.
When $L = \emptyset$ we make the convention that the empty ruling is reduced so that $\widetilde{\mathcal{R}}^\p(K,\emptyset)$ contains a single element.
\end{definition}

Any normal ruling of $S(K,L)$ corresponds to a generalized normal ruling $\tau$ of $L$ along with a reduced normal ruling of a certain satellite associated to $\tau$. More precisely, we have the following.

\begin{theorem} \label{thm:bijection}  Assume that $(K,*) \subset \R^3$ and $L \subset J^1(S^1)$ have front diagrams in general position.   There is a
  bijection
  $\Phi: T \stackrel{\cong}{\rightarrow}\mathcal{R}^1(S(K,L))$ where
  $T$ is the set of ordered pairs $(\tau, \sigma)$ satisfying $\tau
  \in \mathcal{GR}^1(L)$ and $\sigma \in
  \widetilde{\mathcal{R}}^1(K, L^\tau)$.  Here, $L^\tau \subset
  J^1(S^1)$ denotes the link whose front diagram corresponds to the portion   of $L$ that is fixed by $\tau$.  Furthermore, $j(\Phi(\tau,\sigma)) = j(\tau) +j(\sigma)$, and $\Phi(\tau,\sigma)$ is $\p$-graded if and only if $\tau$ and $\sigma$ are.
\end{theorem}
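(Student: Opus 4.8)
The plan is to construct $\Phi$ directly by a recombination procedure and to exhibit an explicit inverse $\Psi\colon \mathcal{R}^1(S(K,L))\to T$, so that the bijection, the index formula, and the grading statement all flow from a single bookkeeping of switches and cusps. The whole argument rests on the two preceding lemmas: the Crossing and Cusp Lemmas together guarantee that the thin part $\rho_T$ of a ruling $\rho\in\mathcal{R}^1(S(K,L))$ is a single involution of $\{1,\dots,n\}$, the same at every point of $K$, and this is what lets me separate the two pieces of $\rho$ unambiguously.

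To define $\Psi(\rho)=(\tau,\sigma)$, I would use $\rho_T$ to split the $n$ parallel copies of $K$ into the $2p$ strands moved by $\rho_T$ (the $p$ thin pairs) and the $n^\tau:=n-2p$ strands fixed by $\rho_T$. I let $\tau$ be the involution of the pattern $L$ that pairs two strands exactly when they form a thin pair of $\rho$ and fixes the rest; then $L^\tau$ is the union of the $n^\tau$ fixed strands, and $\sigma$ is the restriction of $\rho$ to these strands, viewed as a ruling of $S(K,L^\tau)$. I must check that $\tau\in\mathcal{GR}^1(L)$ and $\sigma\in\widetilde{\mathcal{R}}^1(K,L^\tau)$. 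The (generalized) normality conditions for both are inherited from that of $\rho$, since the normality condition is local at each switch and only asks that the two companion intervals be nested or disjoint, a relation among interval endpoints that is unchanged by deleting the intervening strands of the other system. That the fixed-point set of a generalized ruling contains no cusps makes $L^\tau$ a cusp-free pattern (this will be crucial below), and the Cusp Lemma shows that the fixed strands never switch at a crossing coming from a cusp of $K$; in particular $\sigma$ has no switches at left cusps of $K$, so it is reduced. (When every strand is thin, $L^\tau=\emptyset$ and $\sigma$ is the empty ruling.)

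For the forward map I would build $\Phi(\tau,\sigma)$ by overlaying the section-decomposition of $\tau$ and that of $\sigma$: strands paired by $\tau$ become thin pairs of $\rho$, carried around $K$ with the single switch at each cusp of $K$ that the Cusp Lemma prescribes, while the strands of $L^\tau$ are ruled exactly as in $\sigma$. It is immediate from the constructions that $\Phi$ and $\Psi$ are mutually inverse, so the real content is that this overlay is again a genuine normal ruling. \textbf{The hard part will be the normality condition at the crossings where a thin strand meets a strand of $L^\tau$} --- these are precisely the generalized (one-fixed-point) switches of $\tau$, which reappear in $\rho$ as ordinary switches between a thin companion interval and a $\sigma$ companion interval. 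Here the generalized normality of $\tau$ (its thin interval avoids the other strand of the switch) must be reconciled with the ordinary normality of $\sigma$ to produce the nested-or-disjoint condition for $\rho$; I expect this to require a careful analysis of the relative $z$-positions of the two companion intervals, and it is the step I anticipate being most delicate. The remaining axioms (fixed-point freeness, behavior at cusps, the extension near crossings) transfer directly.

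Finally, I would establish the index and grading statements by a local count organized by the three kinds of crossings of $S(K,L)$. Crossings of the pattern contribute the switches of $\tau$ (thin--thin and thin--fixed) together with the switches of $\sigma$ internal to $L^\tau$; crossings coming from crossings of $K$ carry only switches of $\sigma$, by the Crossing Lemma; and crossings coming from cusps of $K$ carry exactly the $2c_K$ switches of each thin pair, where $c_K$ is the number of right (equivalently left) cusps of $K$, for a total of $2c_K p$. Comparing right cusps, $S(K,L)$ has $c_L^R + n\,c_K$ and $S(K,L^\tau)$ has $c_{L^\tau}^R + n^\tau c_K = n^\tau c_K$ (using that $L^\tau$ has no cusps), while $\tau$ has $c_L^R$; substituting $n=2p+n^\tau$ shows the $2c_K p$ excess cusp switches of $\rho$ cancel precisely against its $2c_K p$ excess right cusps, giving $j(\Phi(\tau,\sigma))=j(\tau)+j(\sigma)$. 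For the grading I would use the satellite Maslov potential $\widetilde{\mu}(k)=\mu(k)+\eta_i$: each pairing of $\rho$ is either a thin pair, where the grading condition is literally that of $\tau$ on the corresponding strands of $L$, or a $\sigma$ pair, where it is that of $\sigma$; hence $\rho$ is $\p$-graded if and only if both $\tau$ and $\sigma$ are.
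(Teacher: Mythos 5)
Your construction coincides with the paper's (split $\rho$ along its constant thin part; overlay the $\tau$-pairs, carried around $K$ with one switch at each cusp, with $\sigma$ on the complementary strands), and your bookkeeping for the index formula and the grading is correct. But the step you explicitly set aside --- normality of the overlay at the switches of $\tau$ having one fixed strand --- is not a routine verification to be deferred: it is the heart of the theorem, it is the only place where the hypothesis that $\sigma$ is \emph{reduced} enters the forward direction, and your proposal contains no argument for it. What is needed is the following. At such a switch let $a$ be the non-fixed strand with $\tau$-companion $b$, and let $P_0$ be the fixed strand, which in the overlay acquires a companion $g$ from $\sigma$. One first shows that $g$ lies \emph{outside} the vertical band occupied by the inserted copy of $L$: if $P_0$ and its $\sigma$-companion were both strands of $L^\tau \subset S(K,L^\tau)$, then (since $L^\tau$ has no cusps) they would persist as a thin pair of $\sigma$ just to the left of the band; the Crossing Lemma propagates this thin pair leftward through every crossing of $K$ until a left cusp of $K$ is reached, and the Cusp Lemma then forces a switch of $\sigma$ at that left cusp, contradicting $\sigma \in \widetilde{\mathcal{R}}^1(K,L^\tau)$. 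Once $g$ is known to exit the band, the generalized normality of $\tau$ (which says $P_0 \notin [a,b]$) finishes the argument: $[a,b]$ lies entirely on one side of $P_0$, so it is disjoint from $[P_0,g]$ when it lies on the side opposite $g$, and it is contained in $[P_0,g]$ when it lies on the same side, because $[a,b]$ stays inside the band while $g$ is outside it. Without this, $\Phi(\tau,\sigma)$ is not known to be a normal ruling and the map is not even defined; note also that this is exactly why the theorem pairs generalized rulings of $L$ with \emph{reduced} rulings of $S(K,L^\tau)$ rather than with arbitrary ones.

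A second, smaller defect: as written, your inverse $\Psi$ is too coarse. You define $\tau$ from $\rho_T$ alone, i.e., from the pairing of the strands of $L$ at $x=0$; but a generalized normal ruling of $L$ is not determined by its boundary involution --- it also records which crossings of $L$ are switches. (For example, for $L$ the closure of the $4$-stranded braid $\sigma_2^2$ and the boundary pairing $\{1,2\},\{3,4\}$, there are two such rulings, one with no switches and one with two.) Consequently two rulings of $S(K,L)$ can have the same thin part and the same $\sigma$ yet differ inside $L$, so your $\Psi$ would conflate them and could not invert $\Phi$. The correct definition, and evidently the one you intend given your later count of ``switches of $\tau$,'' takes $\tau$ to be the full restriction of $\rho$ to the inserted copy of $L$, with strands that $\rho$ pairs to points outside that copy declared fixed. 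The same remark applies to your one-line claim that generalized normality of $\tau$ is ``inherited'' under restriction: at one-fixed-strand switches this is not a matter of deleting strands, but again requires the observation that the $\rho$-companion of the fixed strand exits the band of $L$, followed by the interval comparison above.
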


\begin{proof}

Given a pair $(\tau, \sigma) \in T$, $\Phi(\tau, \sigma)$ is constructed as follows.

\begin{itemize}
\item[Step 1.]   Extend $\tau$ to a partial ruling, $\widetilde{\tau}$ of $S(K,L)$ with some fixed point strands:

This is done by letting $\widetilde{\tau} = \tau$ along the subset $L \subset S(K,L)$ and then extending so that away from cusps the thin part of $\widetilde{\tau}$ agrees with the involution $\tau$ at the boundary of the front projection of $L$ in $[0,1]\times \R$.
That is, except near cusps, the $i$-th and $j$-th parallel copies of a strand of $K$ in $S(K,L)$ are paired by $\widetilde{\tau}$ if and only if on the vertical line $x=0$, $\tau(i)= j$.  All other strands of $S(K,L)$ are fixed point strands for $\widetilde{\tau}$.  Finally, to piece $\widetilde{\tau}$  together, for each such $i$ and $j$ we add one switch at every cusp.  Assuming $i < j$, these switches occur where the $i$-th strand corresponding to the lower branch of the cusp passes over the $j$-th strand.  See Figure \ref{fig:ThinCusp}.

\begin{figure}
\centerline{\includegraphics[scale=.5]{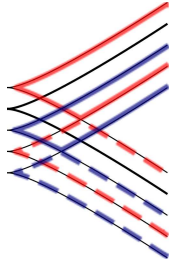}}
\caption{The thin part of a ruling of $S(K,L)$ near cusps.}
\label{fig:ThinCusp}
\end{figure}

\item[Step 2.]  Extend $\widetilde{\tau}$ to $\Phi(\tau, \sigma)$:

The fixed point strands of $\widetilde{\tau}$ (after smoothing near any relevant switches within $L$) form a front diagram which is combinatorially the same as $S(K,L^\tau)$.  We extend by requiring that the restriction to $S(K,L^\tau)$ is the normal ruling $\sigma$.  See Figure \ref{fig:BijEx}.
\end{itemize}

The switches of $\Phi(\tau, \sigma)$ can be divided into three disjoint types:  (A) Switches of $\tau$, (B) switches of $\sigma$, and (C) switches near cusps added in Step 1.

\begin{figure}
\[
\begin{array}{ccc}
\includegraphics[scale=.6]{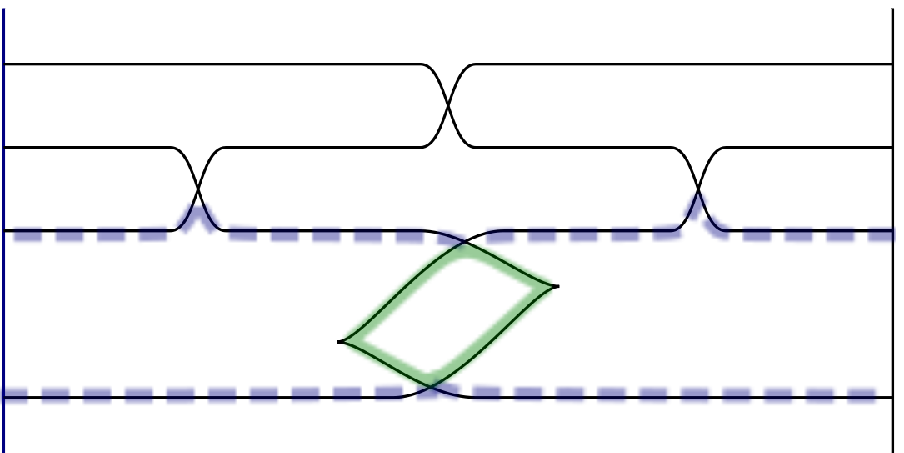} & \quad &\raisebox{2.1 ex}{\includegraphics[scale=.3]{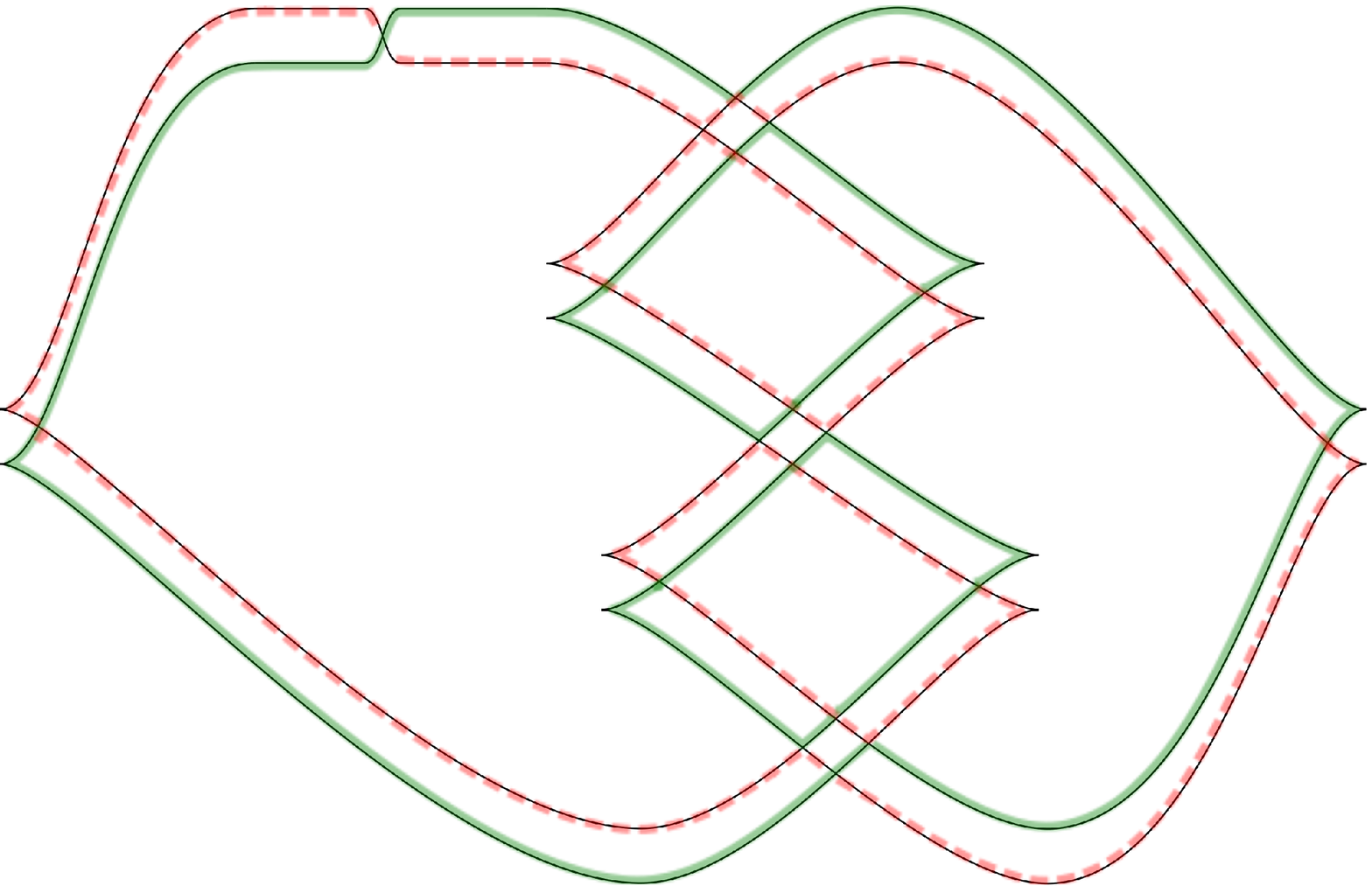}} \\
\tau & \quad & \sigma
\end{array}
\]
\centerline{\includegraphics[scale=.4]{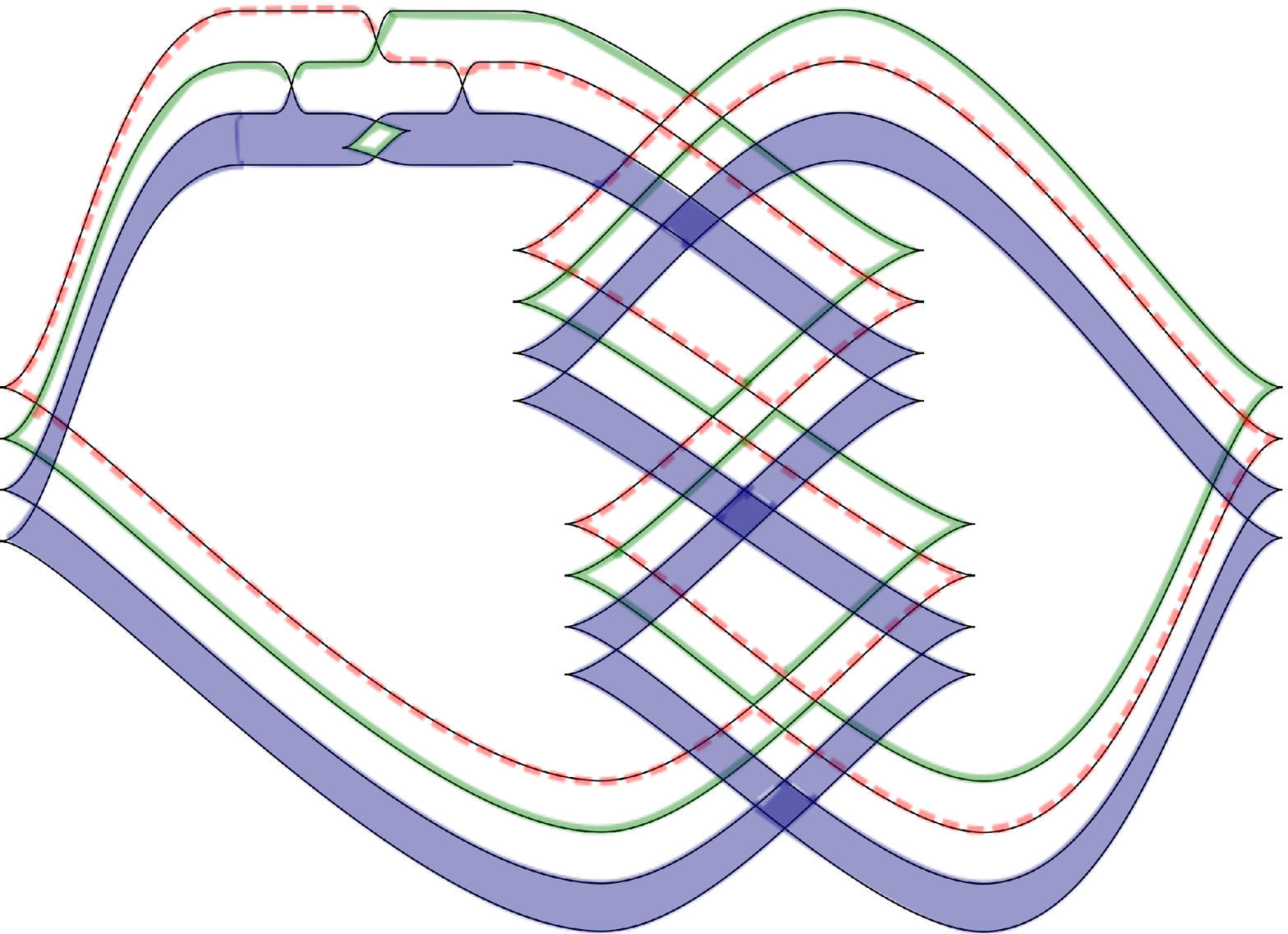}}
\centerline{$\Phi(\tau, \sigma)$}
\caption{The bijection from Theorem \ref{thm:bijection}.  In the pictured example, $K$ is a left-handed trefoil (topologically the mirror of the trefoil pictured in Figure \ref{fig:SatEx}).  The fixed point subset $L^\tau$ is the basic front $A_2$.}
\label{fig:BijEx}
\end{figure}

The normality condition is easily verified for switches of type (B) and (C).  For switches of type (A) we need to consider two subcases.  If neither of the involved strands are fixed point strands of $\tau$, then the normality condition follows since it holds in $\tau$.  If one of the switching strands, say $P_0$, is a fixed point strand of $\tau$, then the normality condition for the ruling $\Phi(\tau, \sigma)$ follows from the generalized normality condition for $\tau$ provided that we know the companion strand of $P_0$ in $\Phi(\tau, \sigma)$ lies outside of $L \subset S(K,L)$.  This is true for the following reason.  The fixed point subset, $L^\tau$, cannot contain cusps, so if their are a pair of thin strands of $\sigma$ within the subset $L^\tau \subset S(K, L^\tau)$, then we will continue to have a pair of thin strands immediately to the left of $L$.  Following this strand of $K$ to the left the Crossing Lemma implies that we continue to have a pair of thin strands until we reach a left cusp of $K$.  Finally, the Cusp Lemma then contradicts the assumption that $\sigma$ is a \underline{reduced} ruling of $S(K, L^\tau)$.

Next, we verify that $j(\Phi(\tau, \sigma)) = j(\tau) + j(\sigma)$.  The cusps of $S(K,L)$ not belonging to the subsets $L$ and $S(K, L^\tau)$ are in two-to-one correspondence with switches of type (C).  Recall that the negative term of $j$ counts $\frac{1}{2}$ the total number of cusps.  Thus, in the computation of $j(\Phi(\tau, \sigma))$ switches of type (C) precisely cancel these unclaimed cusps and we are left with $j(\tau)+ j(\sigma)$.  (Here, we used that $L^\tau$ does not contain cusps, so that none of the cusps of $L$ are double counted in $S(K, L^\tau)$.)

Now we check that $\Phi(\tau, \sigma)$ is $\p$-graded if $\tau$ and $\sigma$ are.  In the construction the only paired paths of $\Phi(\tau, \sigma)$ that do not belong to either $L$ or $S(K, L^\tau)$ are the thin pairs arising from Step 1.  Clearly, immediately next to $L$ the requirement of Definition \ref{def:PGNR} continues to hold.  Following along $K$, it is enough to verify that the condition continues to hold for thin pairs after passing a cusp.  This is immediate.  Near cusps, thin pairs belonging to the upper and lower branch of the cusp are in correspondence, and the Maslov potentials differ by $\pm 1$.

Finally, we show that $\Phi$ is onto.  Let $\rho \in \mathcal{R}^1(S(K,L))$ be arbitrary.  The restriction of $\rho$ to $L \subset S(K,L)$ produces a generalized normal ruling by making the convention that strands of $L$ mapped outside of $L$ by $\rho$ are fixed point strands of $\tau$.  The cusp and crossing lemmas imply that the thin part of $\rho$ is constant and that, precisely as in the construction from Step 1, the only switches outside of $L$ involving thin strands are near cusps of $K$.  Moreover, the complement of the thin strands of $S(K,L)$ will be precisely $S(K, L^\tau)$ and restricting $\rho$ to this subset produces $\sigma$ such that $\Phi(\tau, \sigma) = \rho$.
\end{proof}

\begin{corollary} \label{cor:estimate}  For any Legendrians $K \subset \R^3$ and $L \subset J^1(S^1)$ with $\Z/\p$-valued Maslov potential, we have
\[
R_{S(K,L)}^\p(z) \geq R_L^\p(z),
\]
where $\geq$ refers to inequality between all coefficients of corresponding powers of $z$.
\end{corollary}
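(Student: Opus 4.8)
The plan is to realize $R_L^\p(z)$ as precisely the contribution to $R_{S(K,L)}^\p(z)$ coming from those rulings of the satellite that leave no part of the pattern ``thin,'' using the bijection $\Phi$ of Theorem~\ref{thm:bijection}. The corollary should then follow at once, since the remaining rulings of $S(K,L)$ can only add further non-negative terms.

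First I would observe that every $\p$-graded normal ruling $\rho \in \mathcal{R}^\p(L)$ is in particular a $\p$-graded generalized normal ruling, so $\rho \in \mathcal{GR}^\p(L)$, and since $\rho$ is fixed-point free its fixed-point subset $L^\rho$ is empty. By the convention in the definition of reduced rulings, $\widetilde{\mathcal{R}}^\p(K, L^\rho) = \widetilde{\mathcal{R}}^\p(K,\emptyset)$ contains exactly one element, the empty ruling, which I denote $\emptyset$. Thus each such $\rho$ determines a pair $(\rho,\emptyset)$ lying in the index set $T$ of Theorem~\ref{thm:bijection}. Applying $\Phi$ and invoking the assertion that $\Phi(\tau,\sigma)$ is $\p$-graded precisely when both $\tau$ and $\sigma$ are, I see that the bijection restricts to $\p$-graded data; in particular $\rho \mapsto \Phi(\rho,\emptyset)$ defines an injection $\mathcal{R}^\p(L) \hookrightarrow \mathcal{R}^\p(S(K,L))$. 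The grading formula $j(\Phi(\tau,\sigma)) = j(\tau)+j(\sigma)$, together with the fact that the empty ruling has no switches and no right cusps so that $j(\emptyset)=0$, gives $j(\Phi(\rho,\emptyset)) = j(\rho)$; hence this injection preserves the exponent $j$.

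Summing $z^{j}$ over rulings then shows that the image of this injection contributes exactly $R_L^\p(z)$ inside $R_{S(K,L)}^\p(z) = \sum_{\rho' \in \mathcal{R}^\p(S(K,L))} z^{j(\rho')}$, while every ruling of $S(K,L)$ outside this image (those $\Phi(\tau,\sigma)$ with $\tau$ not fixed-point free) contributes an additional non-negative term $z^{j}$. Comparing coefficients of each power of $z$ yields the claimed inequality $R_{S(K,L)}^\p(z) \geq R_L^\p(z)$. I do not expect a genuine obstacle here: the entire content of the corollary is carried by Theorem~\ref{thm:bijection}, and the only points requiring care are the correct restriction of $\Phi$ to the $\p$-graded setting and the bookkeeping that the empty ruling has $j=0$.
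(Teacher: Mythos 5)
Your proposal is correct and is essentially identical to the paper's proof: the paper also establishes the corollary by composing $\Phi$ with the injection $\mathcal{R}^\p(L) \hookrightarrow T$, $\tau \mapsto (\tau, \sigma_0)$, where $\sigma_0$ is the unique (empty) element of $\widetilde{\mathcal{R}}^\p(K,\emptyset)$. Your additional bookkeeping ($L^\tau = \emptyset$ for fixed-point-free $\tau$, $j(\sigma_0)=0$, preservation of $\p$-gradedness) is exactly the content implicitly invoked by the paper's one-line argument.
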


\begin{proof}
This follows from Theorem \ref{thm:bijection} and the injection $\mathcal{R}^\p(L) \hookrightarrow T$ sending $\tau \mapsto (\tau, \sigma_0)$, where $\sigma_0$ is the unique element of $\widetilde{\mathcal{R}}^\p(K,\emptyset)$.
\end{proof}

\subsection{Normal rulings of satellites and the Thurston--Bennequin number}

Corollary~\ref{cor:estimate} suggests the following definition.

\begin{definition}
Let $K \subset \R^3$ be a Legendrian knot and $L \subset J^1(S^1)$ a Legendrian link, each equipped with a $\Z/\p$-valued Maslov potential. Then $K$ is \textit{$L$-compatible} if $R_{S(K,L)}^\p(z) \neq R_L^\p(z)$.
\end{definition}

In this subsection, we discuss a correlation between $L$-compatibility and maximal Thurston--Bennequin number; later we show that $L$-compatibility is related to the existence of representations of the Chekanov--Eliashberg DGA.

Recall that a Legendrian knot $K \subset \R^3$ has positive and negative stabilizations $S_+(K)$ and $S_-(K)$ obtained by inserting a pair of consecutive cusps (a zigzag) into a strand of the front projection of $K$.  The stabilization is positive (resp. negative) if the new cusps have the orientation of $K$ running downward (resp. upward) along the cusp.  The Legendrian isotopy type of $S_{\pm}(K)$ depends only on the Legendrian isotopy type of $K$, and any Legendrian $K' \subset \R^3$ that is isotopic to $S_{\pm}(K)$ for some $K \subset \R^3$ is said to be {\it stabilized}.

\begin{remark}
\begin{itemize}
\item[(i)] Stabilization may be viewed as a special case of the
  Legendrian satellite construction since $S_{\pm}(K) = S(K, L_{\pm})$
  where $L_{\pm} \subset J^1(S^1)$ are pictured in Figure \ref{fig:SatStab}.
\item[(ii)]  Note that $\tb(S_{\pm}(K)) = \tb(K) -1$.
\end{itemize}
\end{remark}

\begin{figure}
\[
\begin{array}{ccc}\includegraphics[scale=.5]{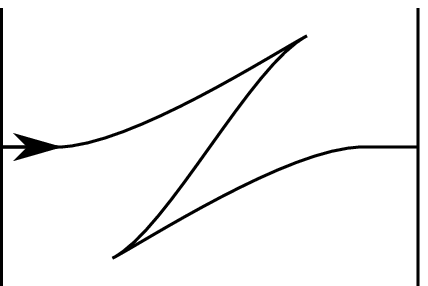} & \quad & \includegraphics[scale=.5]{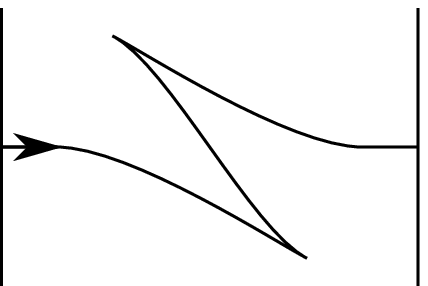} \\
L_+ & \quad & L_-
\end{array}
\]
\caption{Front projections of $L_+$ and $L_-$.}
\label{fig:SatStab}
\end{figure}

\begin{theorem} \label{prop:stab}
If $K$ is stabilized, then for any nonempty $L \subset J^1(S^1)$ with any choice of Maslov potential,
  $R_{S(K,L)}^\p(z) = R_L^\p(z)$, i.e., $K$ is not $L$-compatible.
\end{theorem}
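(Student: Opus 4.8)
The plan is to combine the graded refinement of Theorem~\ref{thm:bijection} with the Crossing and Cusp Lemmas, reducing everything to a local computation at the zigzag. First I would use the fact that, restricting $\Phi$ to $\p$-graded objects, Theorem~\ref{thm:bijection} gives
\[
R^\p_{S(K,L)}(z) \;=\; \sum_{\tau\in\mathcal{GR}^\p(L)} z^{j(\tau)}\,\widetilde{R}^\p_{S(K,L^\tau)}(z).
\]
The terms with $\tau$ fixed-point free (so $L^\tau=\emptyset$) contribute exactly $R^\p_L(z)$, since $\widetilde{\mathcal{R}}^\p(K,\emptyset)$ consists of a single ruling with $j=0$; the remaining terms, indexed by $\tau$ with $L^\tau\neq\emptyset$, all have non-negative coefficients (as in Corollary~\ref{cor:estimate}). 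Hence $R^\p_{S(K,L)}=R^\p_L$ will follow once I show $\widetilde{R}^\p_{S(K,M)}=0$, i.e.\ $\widetilde{\mathcal{R}}^\p(K,M)=\emptyset$, for every \emph{nonempty} pattern $M=L^\tau$. So it suffices to prove: if $K$ is stabilized and $M\neq\emptyset$, then $S(K,M)$ has no reduced normal ruling.

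Next I would suppose for contradiction that $\rho\in\widetilde{\mathcal{R}}^\p(K,M)$. If the front of $M$ misses $x=0$ there is nothing to prove, so I may assume $S(K,M)$ consists of $n\geq 1$ parallel copies of $K$. By the Crossing and Cusp Lemmas the thin part $\rho_T$ is a well-defined involution of $\{1,\dots,n\}$, independent of the base point on $K$. Since $K$ is stabilized its front contains a left cusp, and if $\rho_T(i)=j$ for some $i\neq j$, then the Cusp Lemma would force a switch between the $i$-th and $j$-th strands at every cusp of $K$, in particular at this left cusp, violating the reduced condition. Therefore $\rho_T=\mathit{id}$, and by the Cusp Lemma $\rho$ has no switches at all at the crossings of $S(K,M)$ arising from cusps of $K$.

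The crux is then a local analysis at the stabilization. A stabilization inserts a zigzag, i.e.\ a pair of cusps with no crossings between them, so that the front of $K$ contains a strand $b$ whose left endpoint is a left cusp — pairing $b$ with a strand $c$ — and whose right endpoint is a right cusp — pairing $b$ with a \emph{different} strand $a$ — and such that $b$ meets no crossings of $K$ along its length. I would then follow the copy $b^{(i)}$ in $S(K,M)$: the only crossings it encounters are the strand-reordering crossings near these two cusps, and since those arise from cusps of $K$ and $\rho_T=\mathit{id}$, they are all non-switches, so the ruling companion of $b^{(i)}$ is constant along $b^{(i)}$. But condition (iii) in Definition~\ref{def:NR} forces this companion to equal $c^{(i)}$ near the left cusp and $a^{(i)}\neq c^{(i)}$ near the right cusp — a contradiction. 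Hence $\widetilde{\mathcal{R}}^\p(K,M)=\emptyset$, which completes the reduction and the proof.

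I expect the main obstacle to lie in this last paragraph: one must verify carefully that, along the middle strand $b^{(i)}$ of the zigzag, the only crossings are the cusp-reordering crossings (so that $\rho_T=\mathit{id}$ genuinely pins down a single companion strand), and that this local picture is insensitive both to the sign of the stabilization and, in the $\p$-graded setting, to the choice of Maslov potential. The rest is bookkeeping with the bijection of Theorem~\ref{thm:bijection}.
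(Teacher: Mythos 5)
Your proposal is correct and takes essentially the same route as the paper: both arguments rest on Theorem~\ref{thm:bijection} (equivalently the decomposition \eqref{eq:BijFormula}), the Crossing and Cusp Lemmas, and a local analysis of ruling paths along the middle strand of the inserted zigzag. The paper packages this slightly differently---it shows that \emph{any} normal ruling of $S(K,L)$ has fixed-point-free thin part, so that $\tau \mapsto \Phi(\tau,\sigma_0)$ is onto $\mathcal{R}^\p(S(K,L))$---but that is just the contrapositive form of your vanishing statement $\widetilde{R}^\p_{S(K,M)}=0$ for nonempty $M$, and the geometric contradiction at the zigzag (the companion of $b^{(i)}$ being forced to start on $c^{(i)}$ and end on $a^{(i)}$ with no switches available) is identical to the paper's Case~2.
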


\begin{proof}
We may assume that $(K, *)$ and $L$ are in general position; that the front diagram of $K$ contains a zigzag; and that the base point $*$ does not lie on the strand, $S$, that connects the two cusps of the zigzag.  On the front diagram of the satellite $S(K,L)$, let $C_1, \ldots, C_n$ and $D_1, \ldots, D_n$ denote the left and right cusps corresponding to the zigzag on $K$.

Let $\rho$ be a normal ruling of $S(K,L)$.  We will show that along $S$ the thin part of $\rho$, $\rho_T$, does not have fixed points.  Consider the ruling paths $P_i$ and $Q_i$ originating at a cusp $C_i$.  If $P_i$ switches before leaving the crossing near the $C_i$, then it follows from the Cusp Lemma that $\rho_T(i) \neq i$.  If not, then $P_i$ must end at one of the $D_j$.  Then  $Q_i$ would need to end at $D_j$ as well, but this ensures that $Q_i$ must switch at one of the crossings near the $C_i$.  Again, the Cusp Lemma shows that $\rho_T(i) \neq i$.

Now, from the Cusp and Crossing Lemmas we see that the thin part of $\rho$ must be fixed point free everywhere.  In particular,  $\rho$ restricts to a normal ruling, $\tau$, (without fixed points) on $L \subset J^1(S^1)$.  It follows that $\rho = \Phi(\tau, \sigma_0)$.  Thus, the injection of $\mathcal{R}^\p(L) \hookrightarrow T$ from Corollary \ref{cor:estimate} composed with $\Phi$ is onto $\mathcal{R}^\p(K,L)$, and the result follows.
\end{proof}

\begin{corollary} \label{cor:MaximizeTB} If there exists a pattern $L \subset J^1(S^1)$ such that $K$ is $L$-compatible, then $\tb(K)$  is maximal within the smooth knot type of $K$.
\end{corollary}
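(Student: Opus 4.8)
The plan is to argue by contradiction, and the key realization up front is that one cannot simply take the contrapositive of Theorem~\ref{prop:stab}: it is \emph{not} true in general that a Legendrian knot with non-maximal $\tb$ is stabilized (the Legendrian mountain range can have local maxima lying below the global maximum). Instead I would combine Theorem~\ref{prop:stab} with the fact, coming from Theorem~\ref{thm:Kauffman}, that the ungraded ruling polynomial $R^1$ of a Legendrian link in $\R^3$ depends only on its underlying framed smooth type, since it is $z^{-1}$ times the coefficient of $a^{-\tb-1}$ in the Kauffman polynomial.

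First I would reduce to the ungraded case $\p=1$. Assume $K$ is $L$-compatible for some $\p$, so that $R^\p_{S(K,L)} \neq R^\p_L$; by Corollary~\ref{cor:estimate} this means $R^\p_{S(K,L)} > R^\p_L$ in some coefficient. Under the bijection of Theorem~\ref{thm:bijection}, the rulings of $S(K,L)$ that account for $R^\p_L$ are exactly the trivial extensions $\Phi(\tau,\sigma_0)$ with $\tau \in \mathcal{R}^\p(L)$ (the injection used in the proof of Corollary~\ref{cor:estimate}, which preserves $j$). Hence compatibility produces a $\p$-graded ruling $\Phi(\tau,\sigma)$ not of this form; since $\Phi(\tau,\sigma_0)$ forces $L^\tau=\emptyset$, the extra ruling must have $L^\tau$ nonempty, with $\sigma$ a genuine reduced ruling of $S(K,L^\tau)$. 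Forgetting the grading, $\Phi(\tau,\sigma)$ is an ungraded ruling of $S(K,L)$ whose fixed-point pattern $L^\tau$ is still nonempty, so it is again not a trivial extension. Therefore $R^1_{S(K,L)} > R^1_L$, i.e.\ $K$ is $L$-compatible in the ungraded sense, and it suffices to treat $\p=1$.

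Now suppose for contradiction that $\tb(K)$ is not maximal in its smooth knot type. I would pick a Legendrian $K'$ smoothly isotopic to $K$ with $\tb(K')>\tb(K)$, and let $K''$ be obtained from $K'$ by applying $\tb(K')-\tb(K)\geq 1$ negative stabilizations, so that $K''$ is stabilized, is smoothly isotopic to $K$, and satisfies $\tb(K'')=\tb(K)$. By Remark~\ref{rem:sat}, the satellites $S(K'',L)$ and $S(K,L)$ have the same underlying framed smooth type, as they share the same companion $\tb$, the same companion smooth type, and the same pattern $L$; hence Theorem~\ref{thm:Kauffman} gives $R^1_{S(K'',L)} = R^1_{S(K,L)}$. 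On the other hand, because $K''$ is stabilized and $L$ is nonempty, Theorem~\ref{prop:stab} gives $R^1_{S(K'',L)} = R^1_L$. Combining these, $R^1_{S(K,L)} = R^1_L$, contradicting the ungraded $L$-compatibility established above. Thus $\tb(K)$ is maximal.

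The main obstacle is the one flagged at the start: the naive route through Theorem~\ref{prop:stab} fails, because non-maximal $\tb$ does not imply stabilized. The device that rescues the argument is the ``stabilize down to the same $\tb$'' trick, which converts a hypothetical higher-$\tb$ representative into an honestly stabilized one of identical framed smooth type. This is exactly the point at which the smooth-framed invariance of $R^1$ from Theorem~\ref{thm:Kauffman} is indispensable, and it is also the reason the reduction to $\p=1$ is needed: for $\p>2$ the graded ruling polynomial is not a framed smooth invariant, so the transport step would not be available at those gradings.
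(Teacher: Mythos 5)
Your proof is correct and takes essentially the same route as the paper's: reduce to the ungraded case, replace $K$ by a stabilized knot of the same smooth type and the same $\tb$, and combine Remark~\ref{rem:sat}, Theorem~\ref{thm:Kauffman}, and Theorem~\ref{prop:stab} to get $R^1_{S(K,L)}(z) = R^1_L(z)$, contradicting compatibility. The only difference is that you spell out in detail two steps the paper states tersely, namely the construction of the stabilized same-$\tb$ representative and the fact that $\p$-graded $L$-compatibility implies ungraded $L$-compatibility (which you justify via Theorem~\ref{thm:bijection}).
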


\begin{proof}  If $\tb(K)$ is non-maximal, then there exists a stabilized knot $K'$ with the same smooth type as $K$ and $\tb(K) =\tb(K')$.  Then
\[
R_{S(K,L)}^1(z) = R_{S(K',L)}^1(z) = R_{L}^1(z),
\]
where the first equality is a combination of  Remark \ref{rem:sat} with Theorem \ref{thm:Kauffman} and the second is Theorem \ref{prop:stab}.

If $K$ is $L$-compatible where $L$ has a $\p$-graded Maslov potential, then it remains $L$-compatible in the ungraded setting. This contradicts $R_{S(K,L)}^1 = R_L^1$.
\end{proof}

\subsection{Reduced ruling polynomials}

Theorem \ref{thm:bijection} shows that
\begin{equation} \label{eq:BijFormula}
R^\p_{S(K,L)}(z) = \sum_{\tau \in \mathcal{GR}^\p(L)} z^{j(\tau)} \widetilde{R}^\p_{S(K, L^\tau)}(z).
\end{equation}
We will use this relation to deduce properties of reduced ruling polynomials from corresponding results about standard ruling polynomials.

\begin{theorem} \label{prop:LInvariance} Let $K \subset \R^3$ be a Legendrian knot with $\p \, |\, 2 r(K)$.  For any fixed $\p$-graded $L \subset J^1(S^1)$ with generic front projection,   $\widetilde{R}^\p_{S(K, L)}$ is a Legendrian isotopy
  invariant of $K$.
\end{theorem}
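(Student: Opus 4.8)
The plan is to invert the relation \eqref{eq:BijFormula} so as to express each reduced ruling polynomial as a $\Z[z^{\pm 1}]$-combination of ordinary ruling polynomials $R^\p_{S(K,L')}$, which are already known to be Legendrian isotopy invariants of $K$ by the remark following Theorem~\ref{thm:CP}. The key structural feature is that \eqref{eq:BijFormula} is ``unitriangular'' with respect to the number $n$ of strands of $L$ along $x=0$: every pattern $L^\tau$ on the right-hand side is cusp-free (its front is the fixed-point locus of $\tau$, which by Remark~\ref{rem:GNRDecomp} contains no cusps) and has at most $n$ strands along $x=0$, with a single extremal term isolating $\widetilde{R}^\p_{S(K,L)}$ itself.

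I would first treat the case in which $L$ is cusp-free, arguing by induction on $n$. For such $L$, the involution $\tau=\mathrm{id}$ is the unique generalized normal ruling fixing every strand along $x=0$: since $L$ has no cusps, no pairings can be created away from $x=0$, so any $\tau$ fixing the $x=0$ strands must be the identity. This term contributes $z^{j(\mathrm{id})}\widetilde{R}^\p_{S(K,L)}=\widetilde{R}^\p_{S(K,L)}$, where $j(\mathrm{id})=0$ because $\tau$ has no switches and $L$ has no right cusps, and $L^{\mathrm{id}}=L$. Every other $\tau$ pairs some strand along $x=0$ and hence yields a cusp-free pattern $L^\tau$ with strictly fewer than $n$ strands. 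Solving \eqref{eq:BijFormula} for $\widetilde{R}^\p_{S(K,L)}$ then writes it as $R^\p_{S(K,L)}$ minus a $\Z[z^{\pm 1}]$-combination of the polynomials $\widetilde{R}^\p_{S(K,L^\tau)}$ with $\tau\neq\mathrm{id}$; each of these is an invariant of $K$ by the inductive hypothesis, and $R^\p_{S(K,L)}$ is an invariant, so $\widetilde{R}^\p_{S(K,L)}$ is as well.

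To handle a general, possibly cusped pattern $L$, I would refine the bijection $\Phi$ of Theorem~\ref{thm:bijection}. Inspecting the three types of switches of $\Phi(\tau,\sigma)$, the switches occurring at crossings of $S(K,L)$ that arise from left cusps of $K$ are exactly the type-(C) switches, and these are present precisely when $\tau$ pairs some strand along $x=0$. Thus $\Phi$ restricts to a bijection between $\widetilde{\mathcal{R}}^\p(K,L)$ and the pairs $(\tau,\sigma)$ with $\tau|_{x=0}=\mathrm{id}$, giving
\[
\widetilde{R}^\p_{S(K,L)}(z)=\sum_{\substack{\tau\in\mathcal{GR}^\p(L)\\ \tau|_{x=0}=\mathrm{id}}} z^{j(\tau)}\,\widetilde{R}^\p_{S(K,L^\tau)}(z).
\]
Every pattern $L^\tau$ occurring here is cusp-free and contains the $n$ strands of $L$ along $x=0$, so by the cusp-free case already established each summand $\widetilde{R}^\p_{S(K,L^\tau)}$ is an invariant of $K$; since the exponents $j(\tau)$ depend only on $L$, the left-hand side is an invariant as well.

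The step I expect to be the main obstacle is the bookkeeping in the general case: verifying that reducedness of $\Phi(\tau,\sigma)$ is controlled exactly by the condition $\tau|_{x=0}=\mathrm{id}$ (equivalently, that type-(C) switches at left cusps of $K$ are the only obstruction to reducedness, since the type-(B) switches lie inside the reduced ruling $\sigma$ and the type-(A) switches lie inside $L$), and confirming that the cusp-free patterns $L^\tau$ genuinely fall under the previously established cusp-free case rather than reintroducing circularity. Once this refinement of Theorem~\ref{thm:bijection} is in place, invariance follows formally from the invariance of the ordinary ruling polynomials.
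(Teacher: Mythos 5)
Your proposal is correct and follows essentially the same route as the paper: both establish the cusp-free case by induction on the number of strands at $x=0$ using the relation (\ref{eq:BijFormula}), and then handle cusped patterns by restricting that relation to generalized rulings that act trivially along $x=0$, so that every pattern $L^\tau$ appearing is cusp-free and the cusp-free case applies. If anything, your bookkeeping is stated more carefully than the paper's prose: reduced rulings of $S(K,L)$ correspond under $\Phi$ to pairs $(\tau,\sigma)$ in which $\tau$ \emph{fixes} every strand at $x=0$ (since the type-(C) switches at left cusps of $K$ arise exactly from strands paired by $\tau$ there), which is precisely the refinement of Theorem~\ref{thm:bijection} that the paper's argument implicitly relies on.
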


\begin{proof}
First we establish the result for those front projections $L$ that do not have cusps.  In this case, the summation on the right side of (\ref{eq:BijFormula}) contains $\widetilde{R}^\p_{S(K,L)}$ (when $\tau$ is the generalized ruling that fixes every strand of $A_\Lambda$).
The remaining terms on the right hand side are a $\Z[z^{\pm 1}]$-linear combination of reduced ruling polynomials of the form $\widetilde{R}^\p_{S(K, L')}$ where $L'$ is again a front projection without cusps and fewer strands than $L$.  The front projections $L'$ that appear in the summation as well as the coefficients depend only on $L$.  Since the left hand side of (\ref{eq:BijFormula}) is a Legendrian isotopy invariant of $K$ (Theorem \ref{thm:CP}), the result follows  from inducting on the number of strands of $L$.

The general case when $L$ has cusps follows from this special case.  The reduced ruling polynomial of $L$ arises from restricting the sum on the right hand side of (\ref{eq:BijFormula}) to those generalized rulings $\tau$ that are fixed point free on the line $x=0$.  (This is due to the construction of the bijection in Theorem \ref{thm:bijection}.)  The remaining terms form a linear combination of reduced ruling polynomials of satellites of $K$ where the patterns do not have cusps.  (For any generalized ruling, the fixed point set $L^\tau$ cannot have cusps.)  Moreover, the particular patterns and coefficients appearing in this linear combination only depend on $L$.  These reduced ruling polynomials are all known to be Legendrian isotopy invariants of $K$, as is the left hand side of (\ref{eq:BijFormula}), so the result follows.
\end{proof}

\begin{remark} The reduced ruling polynomials
  $\widetilde{R}^\p_{S(K,L)}$ are not invariant under Legendrian
  isotopy of $L$.  For instance, an isotopy of $L$ that pushes a left cusp from the right side of $x=0$ to the left side of $x=1$ causes the reduced ruling polynomial to vanish.  Currently, we do not know if reduced ruling
  polynomials may be reformulated to obtain this property, and we
  leave the matter as an open question.
\end{remark}

\begin{theorem} \label{prop:RedTopInv} For $\p=1$ or $2$, $\widetilde{R}^\p_{S(K, L)}$ depends only on $L$, $\tb(K)$, and the underlying smooth knot type of $K$.
\end{theorem}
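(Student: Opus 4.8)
The plan is to leverage the invariance result of Theorem~\ref{prop:LInvariance} together with the relationship between ruling polynomials and the Kauffman and HOMFLY-PT polynomials from Theorem~\ref{thm:Kauffman}, exactly as the strategy for topological invariance works in the augmentation case. The key observation is that for $\p = 1$ or $2$, the (ordinary) ruling polynomial $R^\p_{S(K,L)}(z)$ depends only on $\tb$ of the satellite and the smooth (framed) knot type of $S(K,L)$, which by Remark~\ref{rem:sat} is determined by $\tb(K)$, the smooth type of $K$, and the smooth type of $L$.

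First I would reduce to the case where $L$ has no cusps, since the general case was handled in the proof of Theorem~\ref{prop:LInvariance} by expressing $\widetilde{R}^\p_{S(K,L)}$ in terms of reduced ruling polynomials of cusp-free patterns via the restriction of formula~(\ref{eq:BijFormula}); the patterns and coefficients appearing there depend only on $L$, so topological invariance for cusp-free patterns propagates to the general case. Assuming $L$ is cusp-free, I would then induct on the number of strands of $L$ using formula~(\ref{eq:BijFormula}): the left-hand side $R^\p_{S(K,L)}(z)$ depends only on $\tb(K)$ and the smooth types of $K$ and $L$ by Theorem~\ref{thm:Kauffman} and Remark~\ref{rem:sat}; on the right-hand side, the term corresponding to the generalized ruling $\tau$ that fixes every strand is precisely $\widetilde{R}^\p_{S(K,L)}(z)$, while the remaining terms are a $\Z[z^{\pm 1}]$-linear combination of reduced ruling polynomials $\widetilde{R}^\p_{S(K,L')}$ with $L'$ cusp-free of strictly fewer strands, and the patterns $L'$ together with their coefficients depend only on $L$. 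Solving for $\widetilde{R}^\p_{S(K,L)}(z)$ and applying the inductive hypothesis to the lower-strand terms gives the result.

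The main obstacle is verifying that the smooth (framed) knot type of the satellite $S(K,L)$ is genuinely an invariant of the triple $(\tb(K), [K], [L])$ in a way compatible with the Kauffman/HOMFLY-PT input to Theorem~\ref{thm:Kauffman}: one must check that $\tb(S(K,L))$ is itself determined by $\tb(K)$ and the smooth types, so that the coefficient of $a^{-\tb(S(K,L))-1}$ extracted in Theorem~\ref{thm:Kauffman} is a topological quantity. This follows from Remark~\ref{rem:sat}, since the contact framing has framing coefficient $\tb(K)$, so the underlying smooth link type of $S(K,L)$ and hence all its topological invariants (including the data feeding into $F_{S(K,L)}$ and $P_{S(K,L)}$) are determined by $\tb(K)$ and the smooth types of $K$ and $L$. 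The remaining bookkeeping—confirming that the cusp-free patterns $L'$ appearing in the induction also have smooth types determined purely combinatorially by $L$, independent of $K$—is routine given the explicit nature of the bijection $\Phi$ in Theorem~\ref{thm:bijection}.
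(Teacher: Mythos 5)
Your proposal is correct and follows essentially the same route as the paper: the paper's proof simply says to repeat the inductive scheme of Theorem~\ref{prop:LInvariance}, using Theorem~\ref{thm:Kauffman} together with Remark~\ref{rem:sat} to establish the inductive step, which is exactly what you have spelled out. Your additional check that $\tb(S(K,L))$ and the smooth type of $S(K,L)$ are determined by $\tb(K)$, the smooth type of $K$, and $L$ is precisely the content the paper delegates to Remark~\ref{rem:sat}.
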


\begin{proof}
The proof follows a similar scheme to that used for Theorem \ref{prop:LInvariance}.  In this case, Theorem \ref{thm:Kauffman} together with Remark \ref{rem:sat} are used to establish the inductive step.
\end{proof}

\begin{example}
As an example, we consider the case of $1$-graded reduced rulings when the pattern is a product of basic
fronts, $A_\Lambda$, where $\Lambda =(\lambda, {\bf m})$.  Notation is as in Section 2.3.
In the case  of $1$-graded rulings, ${\bf m}$ is uninteresting, and it is not too hard to use (\ref{eq:BijFormula}) to give a rather explicit relation between the ruling polynomials of satellites $S(K, A_\lambda)$ and their reduced analogs.

\begin{theorem} \label{prop:AFormula} Given a partition $\lambda = (\lambda_1, \ldots,
  \lambda_\ell)$, let  $M_\lambda^{\mathit{sym}}$ denote the set of
  all symmetric $\ell \times \ell$ matrices with nonnegative integer
  entries with row sums and column sums equal to $\lambda$.  Then
\[\displaystyle
R^1_{S(K, A_\lambda)}(z) = z^{\ell (\ell-1) } \sum_{(b_{ij}) \in M_\lambda^{\mathit{sym}}} (\prod_i z^{-\delta_i}) (\prod_{i < j} \langle b_{ij} \rangle)  \widetilde{R}^1_{S(K, A_{(b_{11}, \ldots, b_{\ell\ell})})}(z)
\]
where $\delta_i$ is the Kronecker delta $\delta_{b_{ii}, 0}$, and $\langle m \rangle$ denotes the ruling polynomial $R^1_{A_m A_m}(z)$ if $m \neq 0$ and $z^{-2}$ if $m = 0$.
\end{theorem}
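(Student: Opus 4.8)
The plan is to specialize the satellite ruling formula \eqref{eq:BijFormula} to the pattern $L = A_\lambda$ and to reorganize the resulting sum over generalized normal rulings of $A_\lambda$. Since the basic fronts are cuspless, so is their product $A_\lambda$; hence for every $\tau \in \mathcal{GR}^1(A_\lambda)$ the quantity $j(\tau)$ is simply the number of switches of $\tau$, and each fixed-point subfront $A_\lambda^\tau$ is again cuspless. Everything therefore reduces to classifying $\mathcal{GR}^1(A_\lambda)$ and, for each $\tau$, computing the pair $\bigl(j(\tau),\, A_\lambda^\tau\bigr)$.

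To each $\tau$ I would attach a matrix $(b_{ij})$: viewing $A_\lambda = A_{\lambda_1}\cdots A_{\lambda_\ell}$ as $\ell$ vertically stacked blocks, let $b_{ii}$ be the number of strands of block $i$ fixed by $\tau$ and, for $i \neq j$, let $b_{ij}$ be the number of strands of block $i$ that $\tau$ pairs with strands of block $j$, measured along $x=0$. A preliminary point, which must be established first, is that a single basic front admits only the trivial (all-fixed) generalized ruling; this forces that $\tau$ never pairs two strands of the same block, so that every strand of block $i$ is either fixed or paired with a strand of some other block. Consequently the $i$-th row of $(b_{ij})$ sums to $\lambda_i$, and since $\tau$ is an involution the matrix is symmetric, so $(b_{ij}) \in M_\lambda^{\mathit{sym}}$. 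Because the fixed strands cross one another transversally and close up cyclically, the fixed subfront of block $i$ is combinatorially $A_{b_{ii}}$, whence $A_\lambda^\tau = A_{(b_{11},\ldots,b_{\ell\ell})}$; in particular the reduced-ruling factor in \eqref{eq:BijFormula} depends only on the diagonal of $(b_{ij})$.

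The core of the argument is a weight-preserving bijection between $\mathcal{GR}^1(A_\lambda)$ and the set of pairs $\bigl((b_{ij}),(\rho_{ij})_{i<j}\bigr)$, where $(b_{ij}) \in M_\lambda^{\mathit{sym}}$ and, for each $i<j$ with $b_{ij} \neq 0$, $\rho_{ij}$ is a (fixed-point-free) normal ruling of the two-block connection. This connection is combinatorially the doubled front $A_{b_{ij}} A_{b_{ij}}$ — the $b_{ij}$ strands of block $i$ running to block $j$ close up into one copy of $A_{b_{ij}}$ and the matching strands of block $j$ into a second — so these rulings are enumerated by $\langle b_{ij}\rangle = R^1_{A_{b_{ij}} A_{b_{ij}}}$. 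The switches of $\tau$ all lie at crossings internal to the individual blocks, and a Crossing/Cusp-Lemma style analysis together with the generalized normality condition should pin down, at each such crossing, the admissible companion configuration. Granting the bijection, the switch count should evaluate to
\[
j(\tau) = \sum_{\substack{i<j \\ b_{ij}\neq 0}} \bigl(2 + j(\rho_{ij})\bigr) \;-\; \#\{\, i : b_{ii} = 0 \,\},
\]
where each realized connection contributes a fixed shift of $2$ and the correction $-\#\{\,i:b_{ii}=0\,\}$ accounts for the blocks paired off entirely. Substituting into \eqref{eq:BijFormula}, summing $z^{j(\rho_{ij})}$ over the connection rulings, distributing $z^{\ell(\ell-1)} = \prod_{i<j} z^2$ as one factor of $z^2$ per pair, and recording the unrealized pairs $b_{ij}=0$ through the formal value $\langle 0\rangle = z^{-2}$ (so that $z^2\langle 0\rangle = 1$) then assembles the sum into exactly the claimed right-hand side.

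The step I expect to be the main obstacle is establishing this bijection together with the switch-count identity, precisely because a single block may send strands to several different blocks, so an internal crossing of that block can be a ``mixed'' switch joining strands routed to distinct partners; this already occurs for the pattern $A_2 A_1 A_1$, where the lone crossing of the $A_2$ block carries a switch shared between the two $A_1$ blocks. Checking that the generalized normality condition nonetheless selects a unique admissible configuration at each crossing, and that the global switch total telescopes to the displayed formula, is the delicate part. I would first dispatch the two-block case $\lambda = (a,b)$, which both defines $\langle\cdot\rangle$ and realizes the connection front $A_{b_{12}} A_{b_{12}}$ transparently, and then obtain the general case by induction on $\ell$, peeling off one block at a time while carrying the $z$-power bookkeeping through \eqref{eq:BijFormula}.
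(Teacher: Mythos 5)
Your overall strategy is the right one, and indeed it is the only proof the paper gestures at: the paper itself omits the argument, saying only that it is ``similar to that of Theorem 3.4 in \cite{LR},'' and that argument is exactly what you describe --- specialize \eqref{eq:BijFormula} to $L=A_\lambda$, sort the generalized rulings $\tau\in\mathcal{GR}^1(A_\lambda)$ by the symmetric matrix $(b_{ij})$, and prove a product formula for $\sum z^{j(\tau)}$ over each fiber. Your bookkeeping is also correct: the switch count $j(\tau)=\sum_{i<j,\,b_{ij}\neq 0}\bigl(2+j(\rho_{ij})\bigr)-\#\{i:b_{ii}=0\}$ and the convention $z^{2}\langle 0\rangle=1$ do assemble into the stated right-hand side. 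The gap is that the combinatorial core is asserted rather than proved, and the one concrete mechanism you propose for it fails as stated. You say the $b_{ij}$ strands of block $i$ paired with block $j$ ``close up into one copy of $A_{b_{ij}}$,'' and that the fixed strands of block $i$ form $A_{b_{ii}}$ ``because the fixed strands cross one another transversally and close up cyclically.'' Neither is automatic, because nothing in your argument shows that the strands of a given class (fixed, or paired to a given block $j$) occupy \emph{consecutive} positions in their block. If the fixed positions of block $i$ could form two separated runs, the fixed subfront would be $A_a\cdot A_b$ with $a+b=b_{ii}$, not $A_{b_{ii}}$, and the theorem would be false as written; so contiguity is precisely what must be proved. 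Worse, if the ``connection front'' is taken to be the subfront \emph{traced} by the paired sections (smoothing switches, as the paper does for $L^\tau$ in Theorem \ref{thm:bijection}), it is not $A_{b_{ij}}\cdot A_{b_{ij}}$ in general: for $\lambda=(4,2)$ with $b_{11}=b_{12}=2$, the valid ruling that fixes the top two strands of the $A_4$ block, pairs its bottom two strands with the $A_2$ block in nested fashion, and switches at every crossing involving a paired strand has all four paired sections tracing constant-height loops, so the traced subfront is $A_1\cdot A_1\cdot A_1\cdot A_1$, yet this ruling (with $j(\tau)=3$) must correspond to the $j=2$ ruling of $A_2\cdot A_2$. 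The correct correspondence retains the mutual crossings of the connection strands and transports the switch/non-switch data, and this transport respects the normality condition only because the relabeling of positions is order-preserving --- which again requires contiguity.

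So the missing idea is the structural lemma that drives the whole classification. Concretely: (1) closure around $S^1$ forces the monodromy of each block (the ordered product of the transpositions at non-switch crossings) to preserve each class, and the orbits of such a product of consecutive transpositions are exactly the intervals of positions between switches, so each class is a union of such intervals; (2) the (generalized) normality condition at each boundary crossing between two adjacent intervals eliminates every configuration except one --- strands paired upward on top, nested, then fixed strands, then strands paired downward, nested --- and forces every boundary crossing to be a switch. This is what makes each class a single contiguous interval, identifies the fixed subfront with $A_{b_{ii}}$ and each connection with $A_{b_{ij}}\cdot A_{b_{ij}}$, guarantees that the choices for distinct connections are independent, and produces exactly the $2\,\#\{\text{connections}\}-\#\{i:b_{ii}=0\}$ forced switches in your count. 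You correctly flag the ``mixed switch'' phenomenon as the main obstacle, but the proposal contains no argument that overcomes it; and your preliminary claim that a single basic front admits only the trivial generalized ruling (true, but also left unproved) is a special case of this lemma, not a substitute for it.
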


The proof is similar to that of Theorem 3.4 in \cite{LR} and is omitted here.  (Actually, we will not need such a precise formula.)  Inductively, Theorem \ref{prop:AFormula} can be used to give a formula for $\widetilde{R}^1_{S(K, A_\lambda)}(z)$ in terms of ordinary ruling polynomials.
\end{example}

The following result allows us to reduce questions of $L$-compatibility for general $L$ to the special case of the products $A_\Lambda$.

\begin{theorem} \label{thm:3equiv} Let $K$ be a Legendrian knot in $\R^3$ and $\p$ a divisor of $2 r(K)$.  Then the following are equivalent:

\begin{itemize}
\item[(i)] There exists a nonempty $\p$-graded product of basic fronts, $A_\Lambda$, with $\Lambda = (\lambda, {\bf m})$, such that $\widetilde{R}^\p_{S(K, A_\Lambda)} \neq 0$.

\item[(ii)] There exists a nonempty $\p$-graded product of basic fronts, $A_\Lambda$, with $\Lambda = (\lambda, {\bf m})$, such that $K$ is $A_\Lambda$-compatible.

\item[(iii)] There exists $L \subset J^1(S^1)$ with a $\p$-graded Maslov potential, such that $K$ is $L$-compatible.

\end{itemize}
Furthermore, any of these conditions imply that $K$ maximizes $\tb$.
\end{theorem}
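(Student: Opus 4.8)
The plan is to establish the equivalences by proving a cycle of implications, and then to derive the statement about maximizing $\tb$ as a consequence. The easiest arrows come first. The implication (ii)$\Rightarrow$(iii) is immediate, since a product of basic fronts $A_\Lambda$ is itself a particular $\p$-graded $L \subset J^1(S^1)$; taking $L = A_\Lambda$ shows that $A_\Lambda$-compatibility is a special case of $L$-compatibility. For (i)$\Rightarrow$(ii), I would use the decomposition formula~\eqref{eq:BijFormula}. Recall that $R^\p_{S(K, A_\Lambda)}(z) = \sum_{\tau \in \mathcal{GR}^\p(A_\Lambda)} z^{j(\tau)} \widetilde{R}^\p_{S(K, A_\Lambda^\tau)}(z)$. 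The contribution of the \emph{identity} generalized ruling $\tau_0$ (fixing every strand, so $A_\Lambda^{\tau_0} = A_\Lambda$) is exactly $\widetilde{R}^\p_{S(K,A_\Lambda)}(z)$, while by Corollary~\ref{cor:estimate} the sum of \emph{all other} terms, together with the identity contribution evaluated for the pattern alone, already accounts for $R^\p_{A_\Lambda}(z)$. Making this bookkeeping precise shows that a nonzero reduced ruling polynomial forces $R^\p_{S(K,A_\Lambda)} \neq R^\p_{A_\Lambda}$, i.e.\ $A_\Lambda$-compatibility.

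The substantive arrow is (iii)$\Rightarrow$(i): starting from an arbitrary $L$-compatible companion, I must produce a product of basic fronts with nonvanishing reduced ruling polynomial. The strategy is to feed the general pattern $L$ through the decomposition~\eqref{eq:BijFormula} repeatedly. If $K$ is $L$-compatible, then $R^\p_{S(K,L)} \neq R^\p_L$, and again by Corollary~\ref{cor:estimate} this discrepancy must be witnessed by some term in~\eqref{eq:BijFormula} with $\widetilde{R}^\p_{S(K,L^\tau)} \neq 0$ for a generalized ruling $\tau$ whose fixed-point set $L^\tau$ is nonempty. Since the fixed-point set $L^\tau$ \emph{cannot contain cusps} (as emphasized throughout Section~\ref{sec:rulings}), $L^\tau$ is a cuspless pattern. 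The key remaining point is that a cuspless pattern in $J^1(S^1)$, after a suitable Legendrian isotopy, is precisely a product of basic fronts $A_\Lambda$: the strands wind around the $S^1$ factor with crossings but no cusps, and grouping the strands by how they braid and close up yields exactly the data $\Lambda = (\lambda, \mathbf{m})$. Combining the Legendrian invariance of reduced ruling polynomials (Theorem~\ref{prop:LInvariance}) with this identification of $L^\tau$ as some $A_\Lambda$ gives $\widetilde{R}^\p_{S(K,A_\Lambda)} \neq 0$, which is (i).

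I expect the identification of a cuspless pattern with a product of basic fronts to be the main obstacle—not because it is deep, but because it requires care: one must check that any generic cuspless front in $J^1(S^1)$ is Legendrian isotopic to a stacked product of the closures $A_{\lambda_i}$, tracking the Maslov potentials $\mathbf{m}$ so that the grading data survives, and one must confirm that the reduced ruling polynomial is genuinely invariant under the isotopy carrying $L^\tau$ into standard form (which is exactly what Theorem~\ref{prop:LInvariance} supplies). Finally, the assertion that any of (i)--(iii) implies maximality of $\tb$ follows directly from Corollary~\ref{cor:MaximizeTB}: that corollary states that the existence of \emph{any} pattern $L$ making $K$ be $L$-compatible forces $\tb(K)$ to be maximal within its smooth knot type, and condition (iii) asserts exactly the existence of such an $L$.
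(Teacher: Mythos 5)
Your reduction of everything to \eqref{eq:BijFormula}, your handling of (i)$\Rightarrow$(ii) and (ii)$\Rightarrow$(iii), and your derivation of the $\tb$ statement from Corollary~\ref{cor:MaximizeTB} all match the paper and are fine. The genuine gap is in your arrow (iii)$\Rightarrow$(i), and it occurs at exactly the step you flagged as the ``main obstacle.'' First, the claim that a cuspless pattern in $J^1(S^1)$ is Legendrian isotopic to a product of basic fronts is false. In a product $A_\Lambda$ distinct components never cross one another, whereas a general cuspless fixed-point set $L^\tau$ can have mutually crossing components that no Legendrian isotopy removes: take $L'$ to be the annular closure of the $2$-strand front $\sigma_1^2$ (two components, each winding once, crossing twice). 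Then $L'$ arises as $L^\tau$ (e.g.\ from the all-fixed generalized ruling of $L'$ itself), it has the wrong number of components to be $A_{(2)}$, and it is not Legendrian isotopic to $A_{(1,1)}$ because $R^1_{L'}(z)=0$ (the only fixed-point-free involution pairs the two components, which cross, violating Definition~\ref{def:NR}(iv)) while $R^1_{A_{(1,1)}}(z)=1$. Second, even where such an identification is available, the tool you invoke to transport nonvanishing of the reduced ruling polynomial along the isotopy does not exist: Theorem~\ref{prop:LInvariance} asserts invariance of $\widetilde{R}^\p_{S(K,L)}$ under Legendrian isotopy of $K$ with $L$ held fixed, and the remark immediately following it states explicitly that reduced ruling polynomials are \emph{not} invariant under Legendrian isotopy of $L$ (an isotopy pushing a cusp across $x=0$ can kill the polynomial). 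So the final inference $\widetilde{R}^\p_{S(K,L^\tau)}\neq 0 \Rightarrow \widetilde{R}^\p_{S(K,A_\Lambda)}\neq 0$ is unsupported.

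The paper closes the loop by a different mechanism, proving (iii)$\Rightarrow$(ii) rather than (iii)$\Rightarrow$(i): using the skein relations for $\p$-graded ruling polynomials in $J^1(S^1)$ (Lemmas 6.8 and 6.10 of \cite{R2}), any pattern $L$ admits coefficients $c_\Lambda(z)\in\Z[z^{\pm 1}]$ with $R^\p_L(z)=\sum c_\Lambda(z) R^\p_{A_\Lambda}(z)$ and, crucially, the \emph{same} coefficients satisfy $R^\p_{S(K,L)}(z)=\sum c_\Lambda(z)R^\p_{S(K,A_\Lambda)}(z)$, since the satellite's ruling polynomial obeys the same skein relations in the $L$ factor. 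Contraposition then gives: if $K$ is not $A_\Lambda$-compatible for every $\Lambda$, then $R^\p_{S(K,L)}=R^\p_L$ for every $L$. Note this decomposition allows arbitrary (possibly negative) integer coefficients, which is why it succeeds where the positivity-based bookkeeping of \eqref{eq:BijFormula} alone cannot: the bijection formula only sees cuspless fixed-point sets $L^\tau$, and these need not be products of basic fronts. If you want to salvage your route, you would either need this skein-theoretic input or a detour through representations (Theorem~\ref{thm:tech} applied to the cuspless pattern $L^\tau$, then Lemma~\ref{lem:rcf} to conjugate $f(t)$ into the form $M_{A_\Lambda}U$), which is a substantially heavier argument than the one you sketched.
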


\begin{proof}  The remark about $K$ maximizing $\tb$ is Corollary \ref{cor:MaximizeTB}.

The equivalence of (i) and (ii) follows from equation (\ref{eq:BijFormula}) with $L = A_\Lambda$.  Indeed, all of the summands on the right hand side have nonnegative coefficients, and the terms with $L^\tau = \emptyset$ produce exactly the polynomial $R^\p_{A_{\Lambda}}(z)$.  Since $L^\tau$ also has the form $A_{\Lambda'}$ for some $\Lambda'$, we see that these are the only non-zero terms if $\widetilde{R}^\p_{S(K, A_\Lambda)} = 0$ for all $\Lambda \neq 0$.  Hence, (ii) implies (i).  On the other hand,  $\widetilde{R}^\p_{S(K, A_\Lambda)}$ also appears in the sum when $\tau$ is the generalized ruling where every strand is a fixed point.  It follows that if $\widetilde{R}^\p_{S(K, A_\Lambda)} \neq 0$, then $R_{S(K,A_\Lambda)}^\p(z) > R_{A_\Lambda}^\p(z)$.  Thus, (i) implies (ii).

That (ii) implies (iii) is immediate.  For the converse, we need to recall some results about ruling polynomials.

The ruling polynomial $R^\p_L(z)$ satisfies skein relations as in Lemma 6.8 of \cite{R2} with the following modification from the $2$-graded case:  The coefficient $\delta_1$ (resp. $\delta_2$) is $1$ if the strands that cross in the first (resp. second) term have equal Maslov potential mod $\p$ and is $0$ otherwise.  (The proof is virtually identical.)  Moreover, the proof of Lemma 6.10 in the same article provides an algorithm for evaluating the ruling polynomial of an arbitrary Legendrian  $L \subset J^1(S^1)$ as a linear combination of ruling polynomials of basic fronts using these skein relations.  The algorithm, addressed to the $2$-graded case in \cite{R2}, applies equally well in the $\p$-graded case.  In addition, the ruling polynomial of a satellite, $R^\p_{S(K, L)}(z)$, satisfies the same skein relations in the factor $L$.  It follows that we can find coefficients $c_\Lambda(z) \in \Z[z^{\pm 1}]$ such that
\begin{equation}\label{eq:prf}
R^\p_L(z) = \sum c_\Lambda(z) R^\p_{A_\Lambda}(z) \quad  \mbox{and} \quad R^\p_{S(K;L)}(z) = \sum c_\Lambda(z) R^\p_{S(K;A_\Lambda)}(z).
\end{equation}

We now prove (the contrapositive of) (iii) implies (ii).  Assume that for all $\Lambda$, $R_{S(K;A_\Lambda)}(z) = R_{A_\Lambda}(z)$.   Combining this with (\ref{eq:prf}) shows that $R^\p_L(z) = R^\p_{S(K;L)}(z)$.
\end{proof}

\section{Finite-Dimensional Representations of $\A(K,*)$}
\label{sec:reps}

In this section we give necessary and sufficient conditions for the existence of finite dimensional representations of $\A(K, *)$ in terms of normal rulings of Legendrian satellites of $K$, including most of the main results mentioned in the Introduction.

\subsection{ The path matrix}
We begin with some linear algebra.
Let $L \subset J^1(S^1)$ be a $\p$-graded Legendrian link without cusps that intersects the line $x=0$ in $n$ points.  In this case, the DGA of the satellite $S(K,L)$ can be described in terms of the DGA of $(K,*)$ and a matrix $P_L$, known as the path matrix of $L$, which was introduced in \cite{Ka}.  The definition and properties of $P_L$ discussed here are all contained in \cite{Ka}.

Label the crossings of $L$ from left to right as $p_1,p_2, \ldots, p_r$.  As usual each crossing is assigned a degree in $\Z/\p$ as the difference of the value of the Maslov potential on the over- and understrands of the crossing, and the $p_i$ are viewed as non-commuting variables.

\begin{definition}  \label{def:PM} We consider paths within the front
  projection of $L$ that begin on the $i$-th strand (counting from top
  to bottom) at $x=0$ and end on the $j$-th strand (from top to
  bottom) at $x=1$.  At crossings we allow paths to either go straight
  through the crossing or turn a corner around the upper quadrant of
  the crossing.  To each such path we assign a word in the $p_i$ which
  is the product of crossings corresponding to corners of the path
  ordered from left to right (if there are no corners, the word is $1$).  The {\it path matrix} $P_L$ is the matrix whose $ij$-entry is the sum of words associated with all such paths.
\end{definition}

The path matrix is invertible as a matrix with entries in the non-commutative $\Z/2$-algebra generated by the $p_i$.  To see this note that
\[
P_L = C_1 C_2 \cdots C_r
\]
where $C_i$, $1\leq i\leq r$, is the invertible matrix equal to the
identity matrix except with the $2\times 2$ block
$\left[\begin{array}{cc} p_i & 1 \\ 1 & 0 \end{array}\right]$ placed
on the diagonal in rows $k$ and $k+1$, where $k,k+1$ are the labels of
 the strands involved in crossing $p_i$.
Using this perspective, it is also not hard to see that the entries of $(P_L)^{-1}$ correspond to paths from right to left in $L$ that are allowed to turn corners around the lower quadrant of a crossing.

\begin{example} \label{ex:basicpath}
For the basic front $A_n$, label the crossings in
  $A_n$ from left to right as $p_1,\ldots,p_{n-1}$; then the path
  matrix $P_{A_n}$ of $A_n$ satisfies:
\[
P_{A_n} = \left[ \begin{matrix} p_1 & p_2 & \cdots & p_{n-1} & 1 \\
1 & 0 & \cdots & 0 & 0 \\
0 & 1 & \cdots & 0 & 0 \\
\vdots & \vdots & \ddots & \vdots & \vdots \\
0 & 0 & \cdots & 1 & 0 \end{matrix} \right]
\text{~~~ and ~~~}
P_{A_n}^{-1} = \left[ \begin{matrix}
0 & 1 & 0 & \cdots & 0 \\
0 & 0 & 1 & \cdots & 0 \\
\vdots & \vdots & \vdots & \ddots & \vdots \\
0 & 0 & 0 & \cdots & 1 \\
1 & p_1 & p_2 & \cdots & p_{n-1}
\end{matrix} \right] .
\]
Note that $(P_{A_n}^{-1})^\mathrm{T}$ is a matrix in rational canonical form, the so-called
companion matrix to the polynomial $\lambda^n + p_{n-1} \lambda^{n-1}
+ \cdots + p_1 \lambda + p_0 \in (\Z/2)[\lambda]$.

More generally, if $\Lambda = (\lambda,{\bf m})$ and $\lambda =
(\lambda_1,\ldots,\lambda_\ell)$, then $(P_{A_\Lambda}^{-1})^\mathrm{T}$ is also a
matrix in rational canonical form: it is in block-diagonal form with
blocks $(P_{A_{\lambda_1}}^{-1})^\mathrm{T},\ldots,(P_{A_{\lambda_\ell}}^{-1})^\mathrm{T}$. For
future use, we record the following:

\begin{lemma} \label{lem:rcf} Any invertible matrix $M \in GL_n(\Z/2)$ is
conjugate to a matrix of the form $P_{A_\Lambda}$ for some $\Lambda$.
\end{lemma}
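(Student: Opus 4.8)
The plan is to reduce the statement to the theory of rational canonical form over the field $\Z/2$ and to exploit the companion-matrix description of $(P_{A_\Lambda}^{-1})^{\mathrm{T}}$ recorded in Example~\ref{ex:basicpath}. The two standard facts I will use are that every matrix over a field is conjugate to a unique block-diagonal matrix whose blocks are companion matrices (its rational canonical form), and that any square matrix is conjugate to its transpose (equivalently, that transposition preserves the list of invariant factors, as one sees from the Smith normal form of $xI-A$).

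First I would pass from $M$ to $M^{-1}$. Writing $R$ for the rational canonical form of $M^{-1}$, we have $M^{-1} = gRg^{-1}$ for some $g \in GL_n(\Z/2)$, where $R$ is block-diagonal with companion blocks $C(f_1),\ldots,C(f_\ell)$ for the invariant factors $f_1 \mid \cdots \mid f_\ell$ of $M^{-1}$. Because $M^{-1}$ is invertible, $\det C(f_i) \neq 0$ for each $i$, so every $f_i$ is monic with nonzero constant term; over $\Z/2$ this simply means $f_i(0) = 1$. By Example~\ref{ex:basicpath}, choosing $\lambda = (\deg f_1, \ldots, \deg f_\ell)$ and specializing the crossing variables $p_j \in \Z/2$ appropriately lets me arrange that the block $(P_{A_{\deg f_i}}^{-1})^{\mathrm{T}}$ equals $C(f_i)$: the leading coefficient is forced to $1$ (monic), the constant term is forced to $1$ (nonzero), and the remaining coefficients are free parameters that I set equal to the coefficients of $f_i$. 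The Maslov potential data ${\bf m}$ plays no role in the matrix entries, so I take it arbitrary. Setting $\Lambda = (\lambda, {\bf m})$, Example~\ref{ex:basicpath} then gives $(P_{A_\Lambda}^{-1})^{\mathrm{T}} = R$ on the nose.

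It remains to unwind the two adjoints. From $(P_{A_\Lambda}^{-1})^{\mathrm{T}} = R$ I get $P_{A_\Lambda}^{-1} = R^{\mathrm{T}}$, and since a matrix is conjugate to its transpose, $R^{\mathrm{T}}$ is conjugate to $R$, which in turn is conjugate to $M^{-1}$. Hence $P_{A_\Lambda}^{-1}$ is conjugate to $M^{-1}$, and conjugating by the same matrix after inverting shows that $P_{A_\Lambda}$ is conjugate to $M$, as desired. The step I expect to require the most care is precisely this bookkeeping together with the invertibility constraint: one must verify that the blocks arising in the rational canonical form of $M^{-1}$ really do have nonzero constant term (so that they are realizable as companion matrices $(P_{A_{\lambda_i}}^{-1})^{\mathrm{T}}$ with the fixed constant term $1$), and one must keep straight the composition of transpose and inverse relating $(P_{A_\Lambda}^{-1})^{\mathrm{T}}$ to $P_{A_\Lambda}$ itself. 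Neither point is deep, but both are easy to get backwards.
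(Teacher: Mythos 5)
Your proof is correct and follows essentially the same route as the paper: rational canonical form over $\Z/2$, plus the observation that invertibility forces each companion block's constant term to equal $1$, so that each block is realizable as $(P_{A_{\lambda_i}}^{-1})^{\mathrm{T}}$. The only cosmetic difference is that the paper applies rational canonical form directly to $(M^{-1})^{\mathrm{T}}$ rather than to $M^{-1}$, which lets it sidestep your appeal to the standard fact that a matrix is conjugate to its transpose.
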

\begin{proof}
Any matrix at all, in particular $(M^{-1})^\mathrm{T}$, is conjugate to a block diagonal matrix with blocks of the form  $(P_{A_{\lambda_1}}^{-1})^\mathrm{T}$ except that in some blocks the $1$ in the upper right corner may be replaced by $0$.  (This is the standard rational canonical form found in most introductory algebra texts.)  Since $(M^{-1})^\mathrm{T}$ is invertible all of these entries must equal $1$, so $M$ has the desired form.
\end{proof}
\end{example}

\begin{example} \label{ex:HTW}
Let $\Delta_n$ denote a positive half twist of $n$ strands.  The path matrix of $\Delta_n$ is skew-upper triangular (i.e., all entries below the northeast-southwest ``antidiagonal'' are $0$), with $1$'s on the antidiagonal and variables $s_{ij}$ in the entries with $i+j \leq n$.  See Figure \ref{fig:HalfTw}.

\begin{figure}
\centerline{\raisebox{-6.2ex}{\includegraphics[scale=.8]{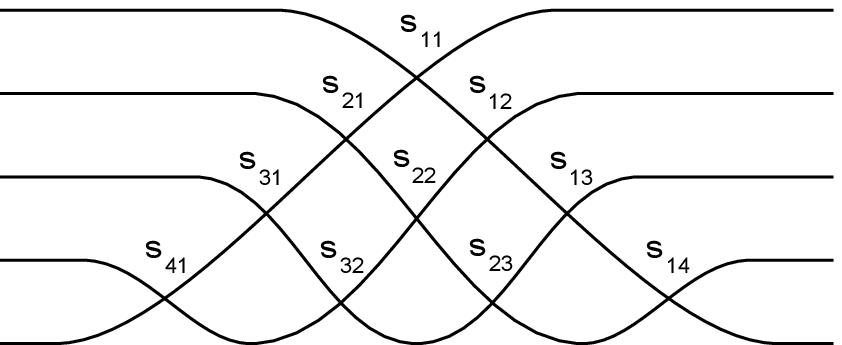}} \quad
$P_{\Delta_5} = \left[\begin{array}{ccccc} s_{11} & s_{12} & s_{13} & s_{14} & 1 \\
s_{21} & s_{22} & s_{23} & 1 & 0 \\
s_{31} & s_{32} & 1 & 0 & 0 \\
s_{41} & 1 & 0&0&0 \\
1&0&0&0&0 \end{array}\right]$}
\caption{A positive half twist with $n=5$ and its path matrix.
}
\label{fig:HalfTw}
\end{figure}

Given ${\bf n} : \Z/\p \rightarrow \Z_{\geq 0}$ we let $\mathit{tw}_{\bf n}$ denote a positive full twist of $n = \sum_{k\in \Z/\p} {\bf n}(k)$ strands  with $\p$-graded Maslov potential $\mu$ as follows:  The first ${\bf n}(0)$ strands have $\mu = 0$, the next ${\bf n}(1)$ strands have $\mu =1$, and continue in this manner until the last ${\bf n}(\p-1)$ strands have $\mu= \p-1$.
 The full twist is the concatenation of two half twists,  $\mathit{tw}_{\bf n} = \Delta_n * \Delta_n$, so it follows that the path matrix of $\mathit{tw}_{\bf n}$ is the product of two skew-upper triangular matrices, $S_1 S_2$.  Note that in $S_1$  (resp. $S_2$) the crossings with degree $0$ mod $\p$ are all on the blocks of sizes ${\bf n}(0) \times {\bf n}(0), \ldots, {\bf n}(\p-1) \times {\bf n}(\p-1)$ running along the antidiagonal and ordered from upper right to lower left (resp. lower left to upper right).
\end{example}

We conclude this discussion by proving two lemmas.  The first provides a standard form result related to the path matrices $P_{\mathit{tw}_{\bf n}}$, and the second involves normal rulings of satellites $S(K, \mathit{tw}_{\bf n})$.

\begin{lemma} \label{lem:factor}
Any matrix $M \in GL_n(\Z/2)$ is conjugate to a matrix of the form
\[
S_1 S_2 U,
\]
where $U$ is upper triangular and $S_1,S_2$ are skew-upper triangular.
\end{lemma}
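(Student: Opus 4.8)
The plan is to first peel off the skew-triangular structure using the antidiagonal permutation, and then to prove a purely triangular factorization by Gaussian elimination. Let $J$ denote the $n\times n$ reversal matrix, $J_{ij}=1$ if $i+j=n+1$ and $J_{ij}=0$ otherwise; it is skew-upper triangular and satisfies $J^2=I$. A direct check shows that for any upper triangular $U$ and any lower triangular $L$, the products $UJ$ and $JL$ are again skew-upper triangular: since $(UJ)_{ij}=U_{i,\,n+1-j}$ and $(JL)_{ij}=L_{n+1-i,\,j}$, both vanish once $i+j>n+1$. Moreover $(UJ)(JL)=UL$. Consequently it suffices to prove the triangular statement that every $M\in GL_n(\Z/2)$ can be written $M=U_1 L U_2$ with $U_1,U_2$ upper triangular and $L$ lower triangular: setting $S_1=U_1 J$ and $S_2=JL$ then gives $M=S_1 S_2 U_2$ of the required form, in fact with no conjugation needed.

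To produce the factorization $M=U_1 L U_2$, I will reduce $M$ to lower triangular form using only two elementary operations: adding a row to a row lying \emph{above} it, and adding a column to a column lying to its \emph{right}. The first is left multiplication by an upper (uni)triangular matrix and the second is right multiplication by one, so the reduction yields $A M C=L$ with $A,C$ upper triangular and $L$ lower triangular, whence $M=A^{-1}L C^{-1}$ has the desired shape (over $\Z/2$ every invertible triangular matrix is unitriangular). I will process the columns from right to left. Suppose columns $c+1,\dots,n$ have already been cleared above the diagonal; then rows $1,\dots,c$ vanish in columns $c+1,\dots,n$, so—because the running matrix is still invertible—its leading $c\times c$ block is invertible, and in particular row $c$ has a nonzero entry in some column $j\le c$. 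If $j<c$, I add column $j$ to column $c$ to create a nonzero pivot in position $(c,c)$; I then clear each entry $(i,c)$ with $i<c$ by adding row $c$ to row $i$. Every such step is one of the two permitted operations, and since row $c$ is zero in columns $>c$, these row additions do not disturb the already-cleared columns to the right.

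The only genuinely delicate point—and the main obstacle—is guaranteeing that a usable pivot is always available, given that neither row swaps nor "adding a lower column into a higher one" are allowed; this is exactly what the leading-block invertibility observation supplies, and it is what makes the restricted elimination terminate in lower triangular form. I expect this invertibility bookkeeping to be the crux of the argument, while the $J$-conjugation and the verification that $UJ,\,JL$ are skew-upper triangular are routine. As an alternative one could instead invoke Lemma~\ref{lem:rcf} to replace $M$ by the block companion matrix $P_{A_\Lambda}$ and factor that directly, but the elimination argument is self-contained and yields the stronger conclusion that $M$ literally equals a product $S_1 S_2 U$.
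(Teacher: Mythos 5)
Your proof is correct, and it takes a genuinely different route from the paper's. The paper first reduces to rational canonical form (as in Lemma~\ref{lem:rcf}): up to conjugation, $M$ is block-diagonal with companion-type blocks $P_{A_{\lambda_i}}^{-1}$; an explicit factorization $S_1S_2U = P_{A_n}^{-1}$ is then written down for a single block, and the blocks are assembled along antidiagonals. You instead prove the Bruhat/Gauss-type factorization $M = U_1 L U_2$ directly, by Gaussian elimination restricted to upward row additions and rightward column additions, and then convert triangular to skew-triangular via the reversal matrix $J$ (setting $S_1 = U_1 J$, $S_2 = JL$ and using $J^2 = I$). The crux you single out---that the invariant ``upper-right block vanishes'' forces the leading $c\times c$ block to be invertible, so a usable pivot always exists---is correctly identified and correctly justified, and your argument is self-contained (no canonical form needed) while yielding a strictly stronger conclusion: $M$ literally \emph{equals} $S_1 S_2 U$, with no conjugation. (Conjugacy is all the paper needs, since in the application to Theorem~\ref{thm:MainResult}(iii) one is free to choose a basis of each graded piece $V_k$.) What the paper's route buys in exchange is thematic economy: the companion blocks $P_{A_{\lambda_i}}^{-1}$ and the antidiagonal block assembly are exactly the objects reused elsewhere---the same rational canonical form reduction drives part (ii) of Theorem~\ref{thm:MainResult}, and the antidiagonal placement of blocks mirrors the structure of the graded full-twist path matrix in Example~\ref{ex:HTW}. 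One small wording caveat in your elimination: the addition of column $j$ to column $c$ should be performed only when the diagonal entry $(c,c)$ is $0$ (if $(c,c)=1$ already, adding a column with a $1$ in row $c$ would destroy the existing pivot); this is clearly your intent, but it is worth stating explicitly.
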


\begin{proof}
We first prove the lemma when $M$ has the form $P_{A_n}^{-1}$ in the notation of
Example~\ref{ex:basicpath}. If we define $S_1,S_2,U$ by
\begin{gather*}
(S_1)_{ij} = \begin{cases} 1 & i+j=n \text{ or } n+1 \\
0 & \text{otherwise} \end{cases},
~~~~~~~~~~~(S_2)_{ij} = \begin{cases} 1 & i+j \leq n+1 \\
0 &\text{otherwise} \end{cases},\\
U_{ij} = \begin{cases} 1 & i=j \\
1+p_j & i=1 \text{ and } j>1 \\
0 & \text{otherwise} \end{cases},
\end{gather*}
then it is easy to check that $S_1S_2U = P_{A_n}^{-1}$.

For general $M$, up to conjugation, we may assume that $M^{\mathrm{T}}$ is in
rational canonical form, so that $M$ is conjugate to the block-diagonal form with blocks
$P_{A_{\lambda_1}}^{-1},\ldots,P_{A_{\lambda_\ell}}^{-1}$. Apply the
lemma to each of these blocks to obtain the desired $S_1,S_2,U$. (Note
that $S_1,S_2$ are also block-diagonal, with blocks corresponding to blocks of $M$ running along the
antidiagonal from top right to bottom left in $S_1$ and from bottom left to top right in $S_2$.)
\end{proof}

\begin{lemma} \label{lem:TwRul}  Every normal ruling of $S(K, \Delta_n)$ or $S(K, \mathit{tw}_{\bf n})$ is reduced.
\end{lemma}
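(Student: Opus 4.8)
The plan is to unwind the definition of \emph{reduced} and show that for these two patterns the thin part of any ruling is forced to be trivial. Since $\Delta_n$ and $\mathit{tw}_{\bf n}$ are braids, their front diagrams have no cusps and meet $x=0$ in $n$ points, so condition~(i) in the definition of a reduced ruling is automatic. For condition~(ii), recall from the Cusp Lemma that a crossing of $S(K,L)$ arising from a left cusp of $K$ is a switch precisely when the thin part $\rho_T$ satisfies $\rho_T(i)=j$ for the two strands $i\neq j$ meeting there, and that by the Crossing and Cusp Lemmas $\rho_T$ is independent of the point of $K$. Thus $\rho$ is reduced if and only if $\rho_T=\mathrm{id}$. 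Restricting $\rho$ to the inserted pattern $L=\Delta_n$ (resp.\ $\mathit{tw}_{\bf n}$) yields a generalized normal ruling $\tau$ of $L$, and by construction (compare the onto part of the proof of Theorem~\ref{thm:bijection}) the involution $\rho_T$ is exactly the matching that $\tau$ induces on the $n$ strands at $x=0$. So the lemma reduces to the purely local claim: \emph{every generalized normal ruling $\tau$ of $\Delta_n$ and of $\mathit{tw}_{\bf n}$ fixes all strands on the line $x=0$.}

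To prove this claim I would track the matching on the $n$ strands induced by $\tau$ on each vertical line away from crossings. As one passes a crossing of the braid, a transverse crossing conjugates this matching by the adjacent transposition while a switch leaves it unchanged, and in either case condition~(iv) of Definition~\ref{def:NR} forbids the two strands meeting at the crossing from being matched to one another. In particular the number of matched pairs is constant as $x$ runs around $S^1$, and, using the normality condition~(vi), the matched pairs form a nested, non-crossing family on each vertical line. The essential geometric input is that in the positive half twist $\Delta_n$ every two of the $n$ strands cross exactly once, while in the full twist $\mathit{tw}_{\bf n}=\Delta_n*\Delta_n$ every two strands cross twice.

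Now suppose for contradiction that $\tau$ matches some two strands at $x=0$, and choose a matched pair $(P,Q)$, with $P$ above $Q$, whose companion strands are innermost, i.e.\ with the fewest strands between them on $x=0$. Because the matching is non-crossing and $(P,Q)$ is innermost, every strand strictly between $P$ and $Q$ is a fixed strand of $\tau$. The generalized normality condition now does the key work: a switch of $P$ (or of $Q$) with one of these interior fixed strands is prohibited, since the vertical interval joining $P$ to its companion $Q$ would contain that fixed strand, and $P,Q$ cannot switch with each other by condition~(iv). Hence the only way $P$ can pass a crossing with an interior strand is transversally, forcing $P$ to descend (and $Q$ to ascend) past it. Following the half twist, which reverses the vertical order of any block of consecutive strands, $P$ and $Q$ are driven together until their strands cross, and that crossing is necessarily a crossing of the companions $P$ and $Q$, violating condition~(iv). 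This contradiction gives $\tau=\mathrm{id}$ at $x=0$, and the full twist case follows from the same ``all strands cross'' property of $\mathit{tw}_{\bf n}=\Delta_n*\Delta_n$.

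The step I expect to be the main obstacle is showing rigorously that the innermost pair $(P,Q)$ cannot be re-routed around its forced mutual crossing by \emph{switching with strands outside} the interval between $P$ and $Q$: such switches are permitted by the generalized normality condition and move $P$ or $Q$ onto other physical strands without changing their heights. Controlling these exterior switches, together with the attendant motion of the interior fixed strands, is exactly the sort of bookkeeping carried out inductively on the number of strands between two companion paths in the proofs of the Crossing Lemma and the Cusp Lemma, and I would model the argument on those inductions.
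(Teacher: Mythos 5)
Your reduction of the lemma to a pattern-level statement is sound: condition (i) of reducedness is automatic, and by the Cusp and Crossing Lemmas a normal ruling of $S(K,\Delta_n)$ is reduced precisely when its thin part is trivial, which is the pairing at $x=0$ of the generalized normal ruling obtained by restricting to $L$ (this is the ``onto'' part of the proof of Theorem~\ref{thm:bijection}). The problem is your proof of the resulting local claim. First, the assertion that ``using the normality condition (vi), the matched pairs form a nested, non-crossing family on each vertical line'' is false: normality and generalized normality constrain the ruling only \emph{at switches}, and impose nothing on how pairs sit relative to one another along vertical lines away from switches. For the trivial $n$-strand braid in $J^1(S^1)$, for example, \emph{every} involution of the strands---including interleaved ones such as $(1\,3)(2\,4)$---is a generalized normal ruling. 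Consequently your innermost-pair argument collapses: choosing $(P,Q)$ with the fewest strands between them at $x=0$ does not force the intermediate strands to be fixed, since an intermediate strand may be paired with a strand outside the interval between $P$ and $Q$ without contradicting minimality; and proving that such interleaving cannot occur for $\Delta_n$ is essentially as hard as the claim itself.

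Second, the step you yourself flag as the main obstacle---ruling out re-routing of $P$ and $Q$ by switches with strands outside the interval---is exactly where the content of the lemma lies, and your extremal choice gives no descent there: an exterior switch does not produce a pair at $x=0$ with fewer strands between its members, so there is nothing to induct on. The paper's proof uses a different extremal object. It first isotopes $\Delta_n$ so the crossings occur in a definite order (each strand in turn crossing over all the strands below it), and then, among \emph{all} switches inside $\Delta_n$ at which at least one switching strand has its companion also inside $\Delta_n$, selects one minimizing the number of strands between that switching strand and its companion \emph{at the switch}. Normality then does two jobs: it forces the switching strand to be the lower (resp.\ upper) half of the switch when it lies above (resp.\ below) its companion, because otherwise the companion of the other switching strand would be trapped strictly between the two, producing a strictly smaller switch; and it shows that any subsequent evasive switch of either companion likewise produces a strictly smaller switch. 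With the chosen ordering of the crossings the two companion paths are then forced to meet at a crossing of $\Delta_n$, violating condition (iv) of Definition~\ref{def:NR}. As written, your proposal has a genuine gap at both of these points; replacing minimality over pairs at $x=0$ by minimality over switches, measured at the switch, is what makes the argument close.
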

\begin{proof}  We treat the case of a half twist; the  proof applies equally well to a full twist.

Following the labeling scheme for crossings indicated in Figure \ref{fig:HalfTw}, we modify the front projection of $\Delta_n$ by a planar isotopy so that crossings appear from left to right in the order $s_{n-1,1}, s_{n-2,1}, s_{n-2,2}, \ldots, s_{1,1}, \ldots, s_{1,n-1}$.  That is,
working from left to right the $(n-1)$-strand crosses over the $n$-strand, then the $(n-2)$-strand crosses over the two strands below it.  This continues, so that each strand takes its turn crossing over all of the strands below.

Suppose that we are given a non-reduced normal ruling of $S(K,\Delta_n)$.  From the Cusp and Crossing Lemmas, there will be at least one pair of thin strands entering the left side of $\Delta_n \subset S(K,\Delta_n)$.  To avoid intersecting each other, this pair of strands must be involved in at least one switch within $\Delta_n$ since every pair of strands crosses in $\Delta_n$.  Thus we can find a switch, $s$, within $\Delta_n$ with the property that at least one of the companion strands also lies within $\Delta_n$, and we can assume that the number of strands lying between the switching strand and its companion strand near the $x$-coordinate of the switch is minimized.

\begin{itemize}
\item[Case 1:]  This switching strand lies above its companion at $s$.  Then the strand must constitute the lower half of the switch.  (Suppose not; then, contrary to the minimality assumption, the normality condition would show that the companion strand to the lower half of the switch lies between the upper switching strand and its companion.)  According to the way we have arranged $\Delta_n$, heading to the right the switching strand will cross over its companion strand unless one of the two strands switches.  Again, the normality condition would force such a switch to contradict the minimality hypothesis.

\item[Case 2:]  This switching strand lies below its companion at $s$. The argument is symmetric.  This switching strand must be the upper half of the switch.  Heading left, it will intersect its companion strand unless a switch occurs.  Such a switch would contradict the minimality assumption.
\end{itemize}
\end{proof}

\subsection{Necessary and sufficient conditions for the existence of representations}
\label{ssec:repcond}

We are now in a position to state precisely our main results relating finite-dimensional representations and satellite rulings.

Given a $\p$-graded Legendrian link $L \subset J^1(S^1)$ we define a $(\Z/\p)$-graded $(\Z/2)$-vector space $V_L$. Suppose the front projection of $L$ intersects the vertical line $x=0$ at points with Maslov potential $\eta_1, \ldots, \eta_n \in \Z/\p$ from top to bottom.  Let $V_L$ be the vector space with basis $e_1, \ldots, e_n$ with basis vectors assigned the grading $|e_i| = \eta_i$ for $i = 1, \ldots n$.  The chosen basis $e_1, \ldots, e_n$ provides an isomorphism $\End(V_L) \cong \mathit{Mat}_{n\times n}(\Z/2)$, and in the following we make use of this identification to view elements of $\End(V_L)$ as matrices.  Note that $V_L$ has graded dimension ${\bf n}: \Z/\p \rightarrow \Z_{\geq 0}$ where ${\bf n}(k)$ is the number of strands of $L$ with Maslov potential equal to $k$ at $x=0$.

Our main technical statement relates reduced normal rulings of $S(K,L)$ to representations of $( \A(K,*), \partial)$ with underlying vector space, $V_L$.

\begin{theorem} \label{thm:tech}  Let $K \subset \R^3$ be a
  $\p$-graded Legendrian knot and $L \subset J^1(S^1)$ a $\p$-graded
  Legendrian link without cusps.  Then $S(K,L)$ has a $\p$-graded
  reduced normal ruling if and only if there exists a $\p$-graded
  representation
\[
f: (\A(K,*), \partial) \rightarrow (\End(V_L),0)
\] such that:
\begin{itemize}
\item
$f$ is a $(\Z/2)$-algebra map and $f\circ\partial = 0$;
\item
$f(t)$ is a matrix of the form $M_L U$, where $U$ is an upper
triangular $n\times n$ matrix, and $M_L$ is the image of
the path matrix
$P_L$ under an algebra map that sends each $p_i$ to some element of
$\Z/2$, with $p_i$ sent to $0$ unless $|p_i| \equiv 0 \pmod{\p}$.
\end{itemize}
\end{theorem}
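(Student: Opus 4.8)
The plan is to prove the equivalence by passing through augmentations of the satellite DGA $\A(S(K,L))$ as an intermediary. On one side, I connect such augmentations to reduced normal rulings of $S(K,L)$ using Theorem~\ref{prop:bg}; on the other, I connect them to representations $f$ of $\A(K,*)$ of the prescribed form via an explicit matrix description of $\A(S(K,L))$. Throughout I would resolve $S(K,L)$ to its Lagrangian projection and, for the ruling side, arrange the front of $K$ (hence of $S(K,L)$, since $L$ has no cusps) in plat position as required by Theorem~\ref{prop:bg}. The grading on $V_L$ is set up to match the Maslov potentials $\eta_1,\ldots,\eta_n$ of $L$ along $x=0$, so that $\p$-gradedness of the augmentation, of the ruling, and of $f$ all correspond.

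The central step is an algebraic bridge: $\p$-graded augmentations $\epsilon$ of $\A(S(K,L))$ that do not augment any crossing arising from a left cusp of $K$ are in bijection with $\p$-graded representations $f \colon \A(K,*)\to\End(V_L)$ with $f(t)=M_LU$ of the stated form. To set this up I organize the crossings of $S(K,L)$ by origin: each crossing $a$ of $\pi_{xy}(K)$ produces an array of crossings in the $n$-copy that I assemble into a matrix $f(a)\in\End(V_L)$; the crossings interior to the pattern supply the variables $p_i$; and the crossings near the cusps of $K$ supply the remaining entries. The key claim is that, under this bookkeeping, the differential on $\A(S(K,L))$ is governed by the matrix-valued differential of $\A(K,*)$ in which $t$ is replaced by the product of the augmented path matrix $M_L$ (recording passage through $L$ at the base point) and an upper triangular matrix $U$ (recording the parallel transport of the $n$ strands around the rest of $K$). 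Granting this, $\epsilon\circ\partial=0$ translates exactly into $f\circ\partial=0$ and forces $f(t)=M_LU$; the $\p$-graded augmentation condition is precisely $p_i\mapsto 0$ unless $|p_i|\equiv 0\pmod{\p}$; and the restriction of $f(t)$ to the form $M_LU$—rather than a general invertible matrix—is exactly the hypothesis that no left-cusp crossing is augmented.

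With the bridge established, I would close the loop using Theorem~\ref{prop:bg}. Given a reduced normal ruling $\rho$ of $S(K,L)$, part~(ii) produces an augmentation whose first augmented crossing is the first switch of $\rho$; since $\rho$ is reduced it has no switches at left-cusp crossings, so in plat position no such crossing is augmented, and the bridge yields the required $f$. Conversely, given $f$ with $f(t)=M_LU$, the bridge produces an augmentation augmenting no left-cusp crossing, and part~(i) yields a normal ruling whose first switch lies at or to the right of the first augmented crossing, hence to the right of all left cusps; this ruling is therefore reduced. Condition~(i) of reducedness is automatic because $L$ has no cusps and meets $x=0$ in $n$ points. I would also check that this ``reduced'' packaging is compatible with the global decomposition of Theorem~\ref{thm:bijection}, which is routine once the identification of left-cusp switches with left-cusp augmentations is in hand.

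I expect the main obstacle to be the matrix description of the satellite DGA itself: proving that, after assembling the satellite crossings into the matrices $f(a)$, the differential $\partial$ on $\A(S(K,L))$ reproduces the differential on $\A(K,*)$ with $t\mapsto M_LU$. This is a disk-matching argument—each disk of $S(K,L)$ must be identified with a disk of $K$ carrying matrix weights—and it requires pinning down that the pattern's contribution is exactly the path matrix $P_L$ of K\'alm\'an and that the cusp contribution is exactly an upper triangular factor. Here the factorization $P_L=C_1\cdots C_r$ from Section~\ref{sec:reps}, together with the multi-base-point formalism of Theorems~\ref{prop:multi1} and~\ref{prop:multi2}, should make the splitting $f(t)=M_LU$ transparent: inserting base points so that $t$ factors as a product of local monodromies isolates the pattern factor $M_L$ from the upper triangular cusp factor $U$. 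The remaining bookkeeping—the sign-free $\Z/2$ setting, and tracking Maslov potentials through the grading on $V_L$—is routine but must be carried out carefully to confirm that $U$ is upper triangular exactly when no left-cusp crossing is augmented.
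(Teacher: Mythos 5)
Your proposal matches the paper's proof in all essentials: the paper likewise passes through augmentations of $\A(S(K,L))$, using Theorem~\ref{prop:bg} to identify reduced rulings with augmentations vanishing on the left-cusp crossings, a matrix form of the satellite differential (Theorem~\ref{thm:matrixdiff}, proved by exactly the thick/thin-disk matching you anticipate) in which $t$ is sent to the path matrix $P_L$ modulo error terms $O(B)$ in the ideal of left-cusp generators, and base points placed at the right cusps (via Theorems~\ref{prop:multi1} and~\ref{prop:multi2}) to split $f(t)=M_LU$ into the pattern factor $M_L$ and a product $U$ of right-cusp monodromies. The only small correction to your gloss: the upper-triangularity of $U$ is not itself equivalent to the no-augmented-left-cusp condition but is forced by the augmentation equations at the \emph{right} cusps (the vanishing of $\pi_{low}$ applied to the relevant matrices), while the left-cusp condition plays the separate role of killing the $O(B)$ terms so that those matrix equations become exact.
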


The proof of Theorem \ref{thm:tech} is given in
Section~\ref{ssec:pftech}.  First we derive some consequences
which are some of the central results of this paper.

\begin{theorem} \label{thm:MainResult} Let $K$ be a Legendrian knot in $\R^3$ and $\p$ a divisor of $2 r(K)$.  Then for fixed, non-zero $\mathbf{n}: \Z/\p \rightarrow \Z_{\geq 0}$, the following are equivalent:
\begin{itemize}

\item[(i)] The Chekanov--Eliashberg algebra $\A(K, *)$ has a $\p$-graded representation of graded dimension $\mathbf{n}$.

\item[(ii)] There exists $\Lambda = (\lambda, {\bf m})$ with ${\bf n}_\Lambda = {\bf n }$  such that
$S(K,A_\Lambda)$ has a $\p$-graded reduced normal ruling.

\item[(iii)] The satellite of $K$ with a full positive twist, $S(K, tw_{\bf n})$, has a $\p$-graded normal ruling. (Note that this link is topologically is the $(\tb(K)+1)$-twisted $n$-copy of $K$.)
\end{itemize}
\end{theorem}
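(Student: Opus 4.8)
The plan is to treat condition (i) as the hub and prove the two equivalences (i) $\Leftrightarrow$ (ii) and (i) $\Leftrightarrow$ (iii) separately, in each case routing through Theorem~\ref{thm:tech}, which already converts reduced normal rulings of cusp-free satellites into representations of a prescribed form. Two of the four implications are essentially immediate. For (ii) $\Rightarrow$ (i): the product of basic fronts $A_\Lambda$ has no cusps, so Theorem~\ref{thm:tech} applied with $L = A_\Lambda$ turns a $\p$-graded reduced normal ruling of $S(K,A_\Lambda)$ into a $\p$-graded representation on $V_{A_\Lambda}$, whose graded dimension is exactly $\mathbf{n}_\Lambda = \mathbf{n}$. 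For (iii) $\Rightarrow$ (i): by Lemma~\ref{lem:TwRul} every normal ruling of $S(K,\mathit{tw}_{\mathbf{n}})$ is automatically reduced, and $\mathit{tw}_{\mathbf{n}}$ has no cusps, so Theorem~\ref{thm:tech} again produces a $\p$-graded representation on $V_{\mathit{tw}_{\mathbf{n}}}$, which has graded dimension $\mathbf{n}$.

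The content is therefore in the forward implications (i) $\Rightarrow$ (ii) and (i) $\Rightarrow$ (iii), and both run on the same template. Start from a $\p$-graded representation $f \colon \A(K,*) \to \End(V)$ of graded dimension $\mathbf{n}$, with a homogeneous basis ordered by increasing grading. Since $|t| = 0$ and $f$ preserves grading, $f(t)$ lies in the degree-$0$ part of $\End(V)$; with respect to the decomposition $V = \bigoplus_{k \in \Z/\p} V_k$ this means $f(t)$ is block diagonal, with invertible blocks $f(t)_k \in GL(V_k) \cong GL_{\mathbf{n}(k)}(\Z/2)$. Any block-diagonal (hence degree-$0$) conjugation $f \mapsto g f g^{-1}$ yields another $\p$-graded representation on the same $V$, so we may replace each $f(t)_k$ by a conjugate. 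This is where the normal-form lemmas enter. For (i) $\Rightarrow$ (ii), apply Lemma~\ref{lem:rcf} within each block to conjugate $f(t)_k$ to $P_{A_{\Lambda_k}}$, where $\Lambda_k$ is a partition of $\mathbf{n}(k)$ with all Maslov potentials equal to $k$; concatenating the $\Lambda_k$ over $k$ gives $\Lambda$ with $\mathbf{n}_\Lambda = \mathbf{n}$ and conjugated $f(t)$ equal to $P_{A_\Lambda}$. Taking $U$ to be the identity, $f(t)$ has the form $M_{A_\Lambda} U$ demanded by Theorem~\ref{thm:tech} (all crossings of $A_\Lambda$ have degree $0$, so no variable is forced to $0$), and that theorem delivers a $\p$-graded reduced normal ruling of $S(K,A_\Lambda)$, which is (ii).

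For (i) $\Rightarrow$ (iii) the same opening applies, but now we invoke Lemma~\ref{lem:factor} within each block to conjugate $f(t)_k$ to $S_1^{(k)} S_2^{(k)} U^{(k)}$ with $S_1^{(k)}, S_2^{(k)}$ skew-upper-triangular and $U^{(k)}$ upper triangular. Setting $U = \bigoplus_k U^{(k)}$ (block diagonal, hence upper triangular), it remains to recognize the block-diagonal matrix $\bigoplus_k S_1^{(k)} S_2^{(k)}$ as a specialization $M_{\mathit{tw}_{\mathbf{n}}}$ of the full-twist path matrix. This is exactly what Example~\ref{ex:HTW} is designed to supply: $P_{\mathit{tw}_{\mathbf{n}}} = S_1 S_2$ is a product of two skew-upper-triangular half-twist matrices in which, after all crossings of nonzero degree are sent to $0$, the surviving degree-$0$ variables of $S_1$ and of $S_2$ fill precisely the antidiagonal blocks of sizes $\mathbf{n}(0), \dots, \mathbf{n}(\p-1)$. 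Choosing these blocks to be $S_1^{(k)}$ and $S_2^{(k)}$ exhibits $f(t)$ in the required form $M_{\mathit{tw}_{\mathbf{n}}} U$, whereupon Theorem~\ref{thm:tech} (together with Lemma~\ref{lem:TwRul}) yields a $\p$-graded normal ruling of $S(K,\mathit{tw}_{\mathbf{n}})$.

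The main obstacle is this last bookkeeping step: verifying that, once degree-$\neq 0$ crossings are specialized to $0$, the full-twist path matrix becomes block diagonal with respect to the grading and that its grading-$k$ block realizes an \emph{arbitrary} product of two skew-upper-triangular matrices of size $\mathbf{n}(k)$. Establishing this requires tracking how the Maslov potential is assigned to the individual crossings of the two half-twists (each strand keeps its potential through a cusp-free twist while its position is reversed) and confirming that the antidiagonal-block structure recorded in Example~\ref{ex:HTW} is compatible with the ordering of the grading blocks, so that the off-block contributions vanish after specialization and the surviving blocks multiply within a single grading. The linear-algebra input from Lemmas~\ref{lem:rcf} and~\ref{lem:factor} is elementary; the real work lies entirely in matching these normal forms to the geometry of the path matrix while respecting the grading.
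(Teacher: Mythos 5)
Your proposal is correct and follows essentially the same route as the paper's proof: both easy directions via Theorem~\ref{thm:tech} (with Lemma~\ref{lem:TwRul} for the full twist), and the hard directions by block-diagonalizing $f(t)$ using the grading, then applying Lemma~\ref{lem:rcf} to reach the form $M_{A_\Lambda}$ for (ii) and Lemma~\ref{lem:factor} together with the antidiagonal-block structure of Example~\ref{ex:HTW} to reach the form $M_{\mathit{tw}_{\bf n}}U$ for (iii). The ``bookkeeping'' you flag as the main obstacle is precisely what Example~\ref{ex:HTW} records, and your verification of it matches the paper's.
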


Note that when ${\bf n}$ is the map ${\bf n}(0) = 1$, ${\bf n}(k) = 0$
for $k\neq 0$, Theorem~\ref{thm:MainResult} reduces to
Theorem~\ref{thm:FIS}, the correspondence between existence of
$\p$-graded augmentations and $\p$-graded rulings.

\begin{proof}[Proof of Theorem~\ref{thm:MainResult}]
The forward direction of Theorem \ref{thm:tech} in conjunction with Lemma \ref{lem:TwRul} shows that either of (ii) or (iii) implies (i).

We now assume that $f: \A(K, *) \rightarrow \End(V)$ is a $\p$-graded representation with $\dim(V) = {\bf n}$, and prove that (ii) and (iii) hold.
From the definitions, $V = \oplus_{k \in \Z/\p} V_k$ with $\dim_{\Z/2}V_k= {\bf n}(k)$, and since $|t| = 0$, $f(t)$ has the form $\sum_{k \in \Z/\p} f(t)_k$ with $f(t)_k \in \mathit{GL}(V_k)$.

For (ii), we choose bases for each $V_k$, so that the matrix of $f(t)_k$ has the form described in Lemma \ref{lem:rcf}, and then concatenate these bases to produce a basis for $V$. Now, let $\lambda_1, \ldots, \lambda_\ell$ denote the block sizes and $m_1, \ldots, m_\ell \in \Z/\p$ denote the grading of the corresponding components.  Using notation as in the statement of Theorem \ref{thm:tech}, the choice of basis provides a grading preserving isomorphism $V \cong V_{A_{\Lambda}}$.  With this identification, we obtain a representation $f$
\[
f: \A(K, *) \rightarrow \End(V_{A_{\Lambda}}),
\]
and as discussed in Example \ref{ex:basicpath} $f(t)$ has the desired form $M_{A_{\Lambda}}$ so that we can apply Theorem \ref{thm:tech} to produce a reduced ruling of $S(K, A_\Lambda)$.

Now we establish (iii).  Applying Lemma \ref{lem:factor}, we can choose a basis for each $V_k$, $k = 0, \ldots, \p-1$, such that the matrix of $f(t)_k$ has the form $S^k_1 S^k_2 U^k$ with $S^k_i$ skew-upper triangular and $U^k$ upper triangular.  Concatenating these bases provides a grading preserving isomorphism $V \cong V_{\mathit{tw}_{\bf n}}$.   With respect to the distinguished basis, $f(t)$ has the form $S_1S_2U$ where $S_1$ (resp.  $S_2$) is obtained from placing the blocks $S^k_1$ (resp. $S^k_2$) along the antidiagonal from upper right to lower left (resp. lower left to upper right).  Consulting Example \ref{ex:HTW} we see that this matrix is of the form $M_{\mathit{tw}_{\bf n}}$ so that an application of Theorem \ref{thm:tech} completes the proof.
\end{proof}

In conjunction with Theorem \ref{prop:RedTopInv} and Corollary
\ref{cor:MaximizeTB}, Theorem~\ref{thm:MainResult} implies the following:

\begin{theorem}  \label{thm:topinvt}
The existence of a $1$- or $2$-graded representation of $\A(K, *)$ of any given dimension depends only on the smooth knot type of $K$ and $\tb(K)$.  If $\A(K, *)$ admits finite dimensional representations, then $K$ must have maximal Thurston--Bennequin number.
\end{theorem}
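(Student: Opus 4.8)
The plan is to assemble the theorem from the equivalences already recorded in Theorem~\ref{thm:MainResult}, combined with the topological invariance of reduced ruling polynomials in gradings $\p=1,2$ (Theorem~\ref{prop:RedTopInv}) and the $\tb$-maximality packaged in Theorem~\ref{thm:3equiv} and Corollary~\ref{cor:MaximizeTB}. All of the representation-theoretic content is carried by Theorem~\ref{thm:MainResult}, so the remaining work is purely one of transport: I would convert existence questions about representations into existence questions about (reduced) normal rulings of explicit satellites, where the invariance results are available.

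For the invariance half, I would fix $\p\in\{1,2\}$ and a nonzero graded dimension $\mathbf{n}\colon\Z/\p\to\Z_{\geq 0}$. By the equivalence (i)$\Leftrightarrow$(iii) of Theorem~\ref{thm:MainResult}, $\A(K,*)$ admits a $\p$-graded representation of graded dimension $\mathbf{n}$ if and only if $S(K,\mathit{tw}_{\bf n})$ has a $\p$-graded normal ruling. By Lemma~\ref{lem:TwRul} every normal ruling of $S(K,\mathit{tw}_{\bf n})$ is reduced, so such a ruling exists precisely when $\widetilde{R}^\p_{S(K,\mathit{tw}_{\bf n})}(z)\neq 0$. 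At this point I would invoke Theorem~\ref{prop:RedTopInv}: for $\p=1$ or $2$ this reduced ruling polynomial depends only on the pattern $\mathit{tw}_{\bf n}$ (which is determined by $\mathbf{n}$), on $\tb(K)$, and on the smooth knot type of $K$, so its nonvanishing depends only on $\tb(K)$ and the smooth type of $K$. This is exactly the claim for a fixed graded dimension. To upgrade to a fixed \emph{total} dimension $n$, I would observe that a representation of total dimension $n$ exists iff one of graded dimension $\mathbf{n}$ exists for some $\mathbf{n}$ with $\sum_k \mathbf{n}(k)=n$; since there are only finitely many such $\mathbf{n}$ and each is governed by $\tb(K)$ and the smooth type, so is their disjunction.

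For the maximality half, suppose $\A(K,*)$ admits any finite-dimensional representation. Forgetting the grading, this is a $1$-graded representation of some dimension $n\geq 1$, i.e.\ of graded dimension $\mathbf{n}$ with $\mathbf{n}(0)=n$ in the case $\p=1$. By the implication (i)$\Rightarrow$(ii) of Theorem~\ref{thm:MainResult}, there is some $\Lambda$ with $\mathbf{n}_\Lambda=\mathbf{n}$ such that $S(K,A_\Lambda)$ has a $1$-graded reduced normal ruling, whence $\widetilde{R}^1_{S(K,A_\Lambda)}\neq 0$. This is precisely condition~(i) of Theorem~\ref{thm:3equiv} taken with $\p=1$, so its final clause — equivalently, Corollary~\ref{cor:MaximizeTB} applied to the resulting $A_\Lambda$-compatibility of $K$ — gives that $\tb(K)$ is maximal within the smooth knot type of $K$.

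The only genuinely delicate point, and the reason the invariance half must be restricted to $\p\in\{1,2\}$, is the appeal to Theorem~\ref{prop:RedTopInv}: reduced ruling polynomials are topological invariants only in these two gradings, since that is exactly where the ruling polynomial is pinned down by the Kauffman and HOMFLY-PT polynomials via Theorem~\ref{thm:Kauffman}, together with the framing computation of Remark~\ref{rem:sat}. For larger $\p$ one has only Legendrian-isotopy invariance (Theorem~\ref{prop:LInvariance}), which is not enough to conclude dependence on $\tb$ and smooth type alone. Everything else is bookkeeping: translating between representations, graded dimensions, and patterns, and verifying that the relevant index sets (the $\Lambda$ with $\mathbf{n}_\Lambda=\mathbf{n}$, and the $\mathbf{n}$ of a prescribed total dimension) are finite.
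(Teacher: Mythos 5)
Your proposal is correct and takes essentially the same approach as the paper: the paper derives Theorem~\ref{thm:topinvt} precisely as a combination of Theorem~\ref{thm:MainResult} with Theorem~\ref{prop:RedTopInv} and Corollary~\ref{cor:MaximizeTB}, and your argument simply makes the intermediate bookkeeping (the passage through $S(K,\mathit{tw}_{\bf n})$ via Lemma~\ref{lem:TwRul}, and the $\tb$-maximality via Theorem~\ref{thm:3equiv}) explicit.
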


\begin{remark}  Shonkwiler and Vela-Vick \cite{SVV} gave examples of Legendrian knots in the topological knot types $m(10_{145})$ and $m(10_{161})$ with non-maximal Thurston--Bennequin and non-trivial Chekanov--Eliashberg algebras.  Sivek \cite{Siv} observed by a direct argument that these algebras do not admit finite-dimensional representations.  This may alternatively be deduced from Theorem \ref{thm:topinvt}.
\end{remark}

The following corollary of Theorem \ref{thm:3equiv} and Theorem \ref{thm:MainResult} addresses the problem of finding an arbitrary pattern $L$ that is compatible with $K$.

\begin{theorem} \label{thm:arbitrary}
The DGA $\A(K, *)$ has a $\p$-graded finite dimensional representation if and only if there exists a $\p$-graded pattern $L \subset J^1(S^1)$ so that $K$ is $L$-compatible.
\end{theorem}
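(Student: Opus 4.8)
The plan is to obtain the statement purely by chaining Theorem~\ref{thm:3equiv} and Theorem~\ref{thm:MainResult}, which share a common intermediate object: a product of basic fronts $A_\Lambda$ whose satellite $S(K,A_\Lambda)$ carries a $\p$-graded reduced normal ruling. The one bookkeeping point to settle up front is the translation between the two ways this object is phrased in the two theorems. Theorem~\ref{thm:3equiv} records it as the nonvanishing of the reduced ruling polynomial, $\widetilde{R}^\p_{S(K,A_\Lambda)} \neq 0$, whereas Theorem~\ref{thm:MainResult}(ii) phrases it as the existence of a $\p$-graded reduced normal ruling of $S(K,A_\Lambda)$. Since $\widetilde{R}^\p_{S(K,A_\Lambda)}(z) = \sum_{\rho} z^{j(\rho)}$ is a sum over reduced $\p$-graded normal rulings with nonnegative coefficients, these two conditions are equivalent: the polynomial is nonzero if and only if at least one such ruling exists. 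I also note that $A_\Lambda$ is cusp-free (each $A_k$ is a braid closure winding around $S^1$), so it is an admissible pattern for both theorems, and I take ``finite-dimensional representation'' to mean one of nonzero graded dimension $\mathbf{n}\neq 0$, matching the nonempty/nonzero hypotheses in the cited results.

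For the forward direction, suppose $\A(K,*)$ has a $\p$-graded finite-dimensional representation, and let $\mathbf{n} \colon \Z/\p \to \Z_{\geq 0}$ be its (nonzero) graded dimension. This is exactly condition (i) of Theorem~\ref{thm:MainResult} for this $\mathbf{n}$, so the implication (i)$\Rightarrow$(ii) there furnishes some $\Lambda = (\lambda,\mathbf{m})$ with $\mathbf{n}_\Lambda = \mathbf{n}$ for which $S(K,A_\Lambda)$ admits a $\p$-graded reduced normal ruling. By the bridging observation, this says $\widetilde{R}^\p_{S(K,A_\Lambda)} \neq 0$, which is condition (i) of Theorem~\ref{thm:3equiv}. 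Invoking (i)$\Rightarrow$(iii) of that theorem then produces a $\p$-graded pattern $L \subset J^1(S^1)$ such that $K$ is $L$-compatible, as desired.

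For the converse, suppose some $\p$-graded $L$ makes $K$ $L$-compatible; this is condition (iii) of Theorem~\ref{thm:3equiv}. The implication (iii)$\Rightarrow$(i) yields a nonempty $\p$-graded product of basic fronts $A_\Lambda$ with $\widetilde{R}^\p_{S(K,A_\Lambda)} \neq 0$, hence (again by the bridging observation) a $\p$-graded reduced normal ruling of $S(K,A_\Lambda)$. Setting $\mathbf{n} = \mathbf{n}_\Lambda$, which is nonzero because $A_\Lambda$ is nonempty, this is condition (ii) of Theorem~\ref{thm:MainResult}, so (ii)$\Rightarrow$(i) there gives a $\p$-graded representation of $\A(K,*)$ of graded dimension $\mathbf{n}\neq 0$, i.e. a finite-dimensional representation. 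I do not expect any genuine obstacle in this argument, since all the substantive work is already carried out in Theorems~\ref{thm:3equiv} and~\ref{thm:MainResult}; the only care needed is the nonempty/nonzero-dimension bookkeeping and the identification of ``nonzero reduced ruling polynomial'' with ``existence of a reduced normal ruling,'' both of which are routine.
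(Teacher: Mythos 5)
Your proposal is correct and is exactly the paper's argument: the paper states Theorem~\ref{thm:arbitrary} as an immediate corollary of Theorems~\ref{thm:3equiv} and~\ref{thm:MainResult}, and your chaining of (i)$\Leftrightarrow$(ii) of Theorem~\ref{thm:MainResult} with (i)$\Leftrightarrow$(iii) of Theorem~\ref{thm:3equiv} is precisely the intended deduction. The bookkeeping you flag (nonzero graded dimension, nonemptiness of $A_\Lambda$, and identifying $\widetilde{R}^\p_{S(K,A_\Lambda)}\neq 0$ with the existence of a reduced ruling, which holds since the coefficients are nonnegative and cannot cancel) is handled correctly and is the only detail the paper leaves implicit.
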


We conclude this subsection by discussing the relation between
Theorem~\ref{thm:topinvt} and a conjecture from \cite{NgCLI} about
topological invariance of the abelianized characteristic algebra.

\begin{definition}[\cite{NgCLI}] Let $K$ be a Legendrian knot with DGA
  $(\A,\d)$. Define $I \subset \A$ to be the two-sided ideal generated
  by the collection $\{\d(a_i)\}$ of differentials of generators of
  $\A$. The \textit{characteristic algebra} of $K$ is the algebra
  $\A/I$; the \textit{abelianized characteristic algebra} of $K$ is
  the abelianization of $\A/I$.
\end{definition}

It was observed in \cite{NgCLI} that the abelianized characteristic
algebra, viewed without grading, often seems to depend only on the
smooth knot type of $K$ and $\tb(K)$, and it was conjectured that this
is always the case (up to a natural equivalence relation; see
\cite{NgCLI}). We do not
resolve this conjecture here, but
address a related construction.

\begin{definition}
Let $K$ be a Legendrian knot, and $(\A,\d),I$ as above. Define
$I'\subset\A$ to
be the smallest two-sided ideal containing $I$ such that whenever
$x,y\in\A$ satisfy $1-xy \in I'$, then $1-yx \in I'$ as well. The
\textit{partially abelianized characteristic algebra} of $K$ is
defined to be $\A/I'$.
\end{definition}

Note that we have a sequence of successive quotients: characteristic
algebra; partially abelianized characteristic algebra; abelianized
characteristic algebra.
From computations, it appears that the (ungraded) partially abelianized
characteristic algebra, like the abelianized version, depends only on
smooth type and $\tb$, and perhaps this is a more natural conjecture
than the conjecture from \cite{NgCLI}.

\begin{conjecture} \label{conj:paca}
Up to equivalence, the ungraded partially abelianized characteristic
algebra of a Legendrian knot $K$ depends only on the smooth type and
$\tb$ of $K$.
\end{conjecture}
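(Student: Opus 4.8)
The plan is to reinterpret the partially abelianized characteristic algebra $\A/I'$ as the universal object carrying the (ungraded, i.e.\ $\p=1$) finite-dimensional representation theory of $\A$, and then to transport the topological invariance of that representation theory, already established in Theorem~\ref{thm:topinvt}, up to the level of the algebra itself. The first and most routine step is to pin down the defining feature of $I'$: for any finite-dimensional $(\Z/2)$-vector space $V$, an algebra map $g : \A/I' \to \End(V)$ is the same data as a finite-dimensional representation $f : (\A,\d) \to (\End(V),0)$. Indeed, $f\circ\d = 0$ forces $I \subset \ker f$, and since $\End(V)$ is a finite-dimensional algebra any one-sided inverse is two-sided: $f(xy)=\mathrm{id}_V$ implies $f(yx)=\mathrm{id}_V$. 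Thus $\ker f$ contains $I$ and is closed under the operation defining $I'$, so $I'\subset\ker f$ and $f$ factors through $\A/I'$; conversely any map out of $\A/I'$ pulls back to such an $f$. Hence $\A/I'$ represents the functor sending a matrix algebra to its set of DGA representations.

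Next I would attempt to recover the equivalence class of $\A/I'$ from this representation functor alone. The equivalence relation of \cite{NgCLI} is a stabilization-type relation mirroring stable tame isomorphism of the DGA; the goal would be to show that if two knots have DGAs whose finite-dimensional representation functors are naturally isomorphic, compatibly across all dimensions and under stabilization, then their partially abelianized characteristic algebras are equivalent. This is where the choice of $I'$, rather than the full abelianization, is essential: by quotienting out exactly the relations forced by finite-dimensional representations, one hopes that $\A/I'$ sees only the representation functor and not the finer tame-isomorphism data.

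The topological input then enters through the satellite machinery. Theorem~\ref{thm:MainResult} identifies the existence of a representation of graded dimension $\mathbf{n}$ with a normal ruling of the satellite $S(K,\mathit{tw}_{\mathbf{n}})$, and Theorem~\ref{thm:topinvt}, via the Kauffman/HOMFLY-PT correspondence of Theorem~\ref{thm:Kauffman} together with Remark~\ref{rem:sat}, shows that this depends only on the smooth type and $\tb(K)$. The aim would be to promote these existence statements to a statement about the whole representation functor by expressing that functor in terms of reduced rulings of the satellites $S(K,A_\Lambda)$ (through Theorem~\ref{thm:tech}) and of $S(K,\mathit{tw}_{\mathbf{n}})$, whose governing ruling invariants are topological for $\p=1$.

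The hard part will be precisely the bridge in these last two steps. Theorem~\ref{thm:topinvt} supplies only invariance of the \emph{existence} of representations of each dimension, equivalently of the $1$-graded ruling polynomials of satellites, which are mere Kauffman/HOMFLY-PT coefficients; the conjecture instead demands invariance of the algebra $\A/I'$ up to equivalence. Recovering the algebra forces one to control the entire representation functor, including how representations of differing dimensions are related through the operations encoded by $I'$ and how they behave under stabilization, rather than merely counting or detecting them. The satellite ruling polynomials are numerical topological invariants and seem too coarse to determine this functorial data, while the satellite DGA, which does carry finer structure, is only a Legendrian invariant and is not itself topological. Closing this gap between the numerical topological invariance furnished by the ruling/Kauffman machinery and the structural invariance of $\A/I'$ required by the conjecture is exactly the difficulty that leaves the statement open.
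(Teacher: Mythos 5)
You were asked about a statement that the paper itself does not prove: it is stated as Conjecture~\ref{conj:paca}, and the authors explicitly write that they do not resolve it, offering only Corollary~\ref{cor:paca} as corroborating evidence. So the correct assessment of your proposal is not whether it matches a proof in the paper (there is none), but whether your analysis of the situation is sound --- and it is. Your first step is correct and is precisely the observation underlying Corollary~\ref{cor:paca}: since one-sided inverses in the finite-dimensional algebra $\End(V)$ are two-sided, the kernel of any representation $f:(\A,\d)\to(\End(V),0)$ contains $I$ and is closed under the operation defining $I'$, so every finite-dimensional representation factors through $\A/I'$, and consequently the representation-existence invariance of Theorem~\ref{thm:topinvt} transfers to $\A/I'$ dimension by dimension.

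Your remaining two steps are, almost verbatim, the route the authors themselves sketch and decline to pursue: they say it is ``conceivable'' that Corollary~\ref{cor:paca} could be strengthened to a genuine correspondence between $n$-dimensional representations of $K_1$ and $K_2$, and that the collection of finite-dimensional representations of such a $\Z/2$-algebra might determine the algebra up to equivalence, ``in which case Conjecture~\ref{conj:paca} would follow.'' Neither of these steps is established in the paper or in your proposal, and your closing paragraph correctly names the obstruction: the satellite/ruling machinery (Theorems~\ref{thm:MainResult}, \ref{thm:topinvt}, \ref{thm:Kauffman}) yields only numerical, existence-level topological invariants, while the conjecture demands structural invariance of the algebra itself. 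In short, your proposal is an accurate diagnosis of why the statement is open rather than a proof of it; no step you assert is wrong, but the conjecture remains unproven by your argument, exactly as it remains unproven in the paper.
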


Theorem~\ref{thm:topinvt} gives
some corroborating evidence:

\begin{corollary} \label{cor:paca}
Let $K_1,K_2$ be Legendrian knots with the same smooth type and
$\tb$. Then for any $n$, the ungraded partially abelianized
characteristic algebras of $K_1$ and $K_2$ either both have, or both
do not have, an $n$-dimensional representation over $\Z/2$.
\end{corollary}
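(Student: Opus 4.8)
The plan is to show that an $n$-dimensional representation of the partially abelianized characteristic algebra $\A/I'$ is exactly the same datum as an ungraded $n$-dimensional representation of the DGA $(\A,\d)$, and then to invoke the $\p=1$ case of Theorem~\ref{thm:topinvt}. First I would unwind the definitions: an $n$-dimensional representation of $\A/I'$ over $\Z/2$ is a unital algebra homomorphism $\A/I' \to \End(V)$ with $\dim_{\Z/2} V = n$, equivalently a unital algebra map $f\colon \A \to \End(V)$ with $I' \subseteq \ker f$. Since $I \subseteq I'$, any such $f$ kills $I$; and because $I$ is the two-sided ideal generated by $\{\d(a_i)\}$, an algebra map kills $I$ exactly when $f(\d a_i) = 0$ for every generator, which by the Leibniz rule is equivalent to $f\circ\d = 0$. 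Thus every representation of $\A/I'$ is an ungraded representation of $(\A,\d)$ in the sense of Definition~\ref{def:rep} (note that $f(t)$ is automatically invertible, with inverse $f(t^{-1})$, so no further condition is needed).

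The crux of the argument is the converse: every ungraded finite-dimensional representation $f\colon \A \to \End(V)$ of the DGA automatically factors through $\A/I'$. Given such an $f$, its kernel $\ker f$ is a two-sided ideal containing $I$, and I claim it satisfies the closure property defining $I'$. Indeed, if $1 - xy \in \ker f$, then $f(x)f(y) = \mathrm{id}_V$, and since $V$ is finite-dimensional this forces $f(y)f(x) = \mathrm{id}_V$ as well, i.e.\ $1 - yx \in \ker f$. As $I'$ is by definition the smallest two-sided ideal containing $I$ with this closure property, we conclude $I' \subseteq \ker f$, so $f$ descends to $\A/I'$. This is the single place where finite-dimensionality is genuinely used — one-sided invertibility need not be two-sided in infinite dimensions — and it is precisely the feature that renders the partial abelianization invisible to finite-dimensional representations. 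I expect verifying this closure property to be the heart of the matter, though it is short.

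Combining the two directions, $\A/I'$ admits an $n$-dimensional representation over $\Z/2$ if and only if $(\A,\d)$ admits an ungraded $n$-dimensional representation. The corollary then follows at once: if $K_1$ and $K_2$ have the same smooth knot type and the same $\tb$, then by the $\p=1$ case of Theorem~\ref{thm:topinvt} the DGAs $\A(K_1,*)$ and $\A(K_2,*)$ admit ungraded $n$-dimensional representations under exactly the same conditions, and hence so do their partially abelianized characteristic algebras. I anticipate no serious obstacle beyond the finite-dimensional invertibility step above, so the write-up should be brief.
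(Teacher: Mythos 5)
Your proposal is correct and matches the argument the paper intends (the paper states this as an immediate consequence of Theorem~\ref{thm:topinvt} without writing out details): the closure property defining $I'$ is precisely what kernels of finite-dimensional representations satisfy automatically, since one-sided inverses in $\End(V)$ are two-sided, so $n$-dimensional representations of $\A/I'$ coincide with ungraded $n$-dimensional representations of $(\A,\d)$. Combining this identification with the $\p=1$ case of Theorem~\ref{thm:topinvt} gives the corollary exactly as you describe.
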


It is conceivable
that this corollary could be strengthened to give some sort of correspondence between $n$-dimensional representations of $K_1$ and $K_2$, and that
the collection of finite-dimensional representations of a
$\Z/2$-algebra of the type considered here is enough to determine the
$\Z/2$-algebra, in which case Conjecture~\ref{conj:paca} would follow.
However, we do not pursue this direction further in this paper.

The rest of Section~\ref{sec:reps} is devoted to the proof of
Theorem~\ref{thm:tech}. We first address some preliminary issues
before presenting the proof in Section~\ref{ssec:pftech}.

\subsection{The DGA of the satellite $S(K, L)$}

For the rest of this section, we let $K \subset \R^3$ and $L \subset J^1(S^1)$ be $\p$-graded Legendrian links such that $L$ has no cusps.
We fix a base point $*$ on $K$ which is assumed to be located on a strand of $K$ that is oriented to the right.
For the proof of Theorem \ref{thm:tech}, there is no loss of generality from this assumption in view of Theorems \ref{prop:LInvariance} and \ref{prop:multi2}.  Furthermore, we may assume that the front diagram of $K$ is in plat position.  This can always be achieved by a Legendrian isotopy which will not effect the existence of either reduced rulings (Theorem \ref{prop:LInvariance}) or the type of representation in question.  (After an isotopy, $A(K, *)$ changes only by a stable tame isomorphism which maps $t$ to $t$.)

When $K$ is in plat position it follows that $S(K, L)$ will be plat as well.  We now describe the Chekanov--Eliashberg algebra associated with the resolution of this front diagram.  We maintain here our convention of, away from $L$, labeling the $n$ strands running parallel to $K$ from $1$ to $n$ as one moves from top to bottom.

\begin{figure}
\centerline{\includegraphics[scale=.5]{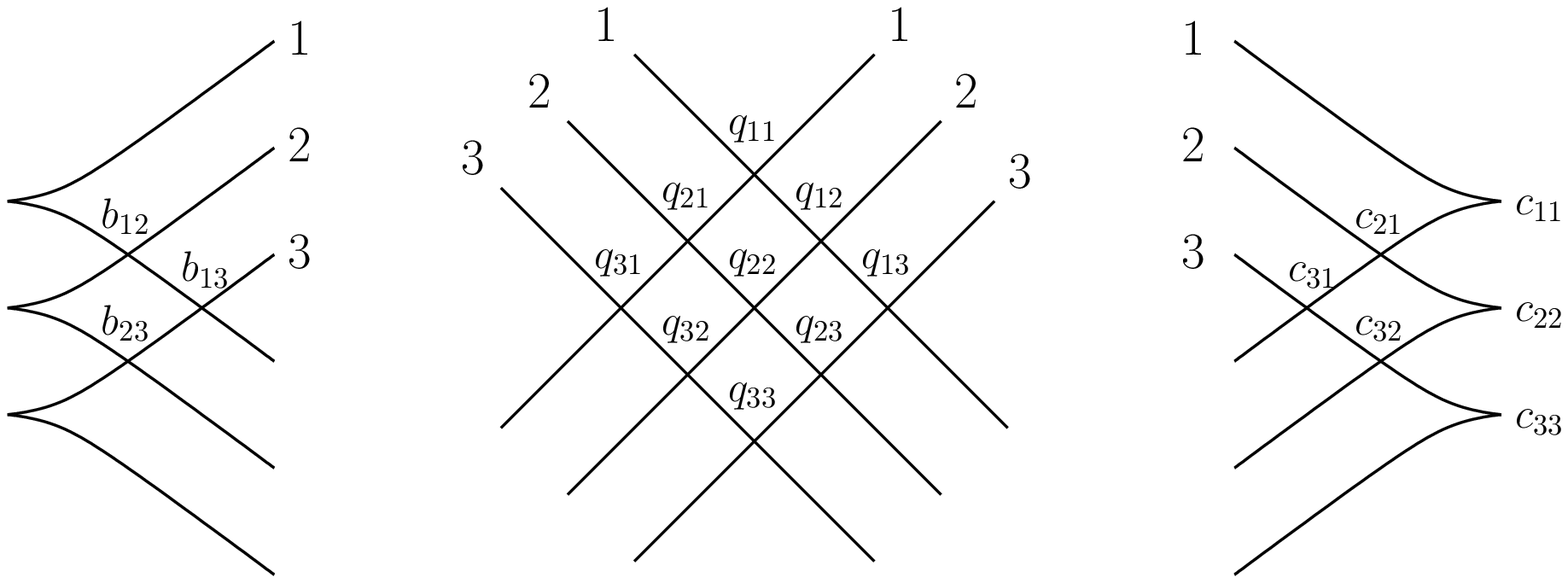}}
\caption{
Generators $b_{ij}^m$, $q_{ij}^m$, or $c_{ij}^m$ of $\A(S(K,L))$
corresponding to a left cusp, crossing, or right cusp $m$ of $K$; the
$m$ superscripts are suppressed.}
\label{fig:ncopy}
\end{figure}

\subsubsection{Generators of $\A(S(K,L))$}  We first label the
crossings and right cusps of $S(K,L)$; see Figure~\ref{fig:ncopy} for
an illustration.
Enumerate the left cusps of $K$ as $b_1, \ldots, b_{M_1}$.  Each of
these produces a strictly upper triangular matrix worth of generators
$b^m_{ij}$, $1 \leq i < j \leq n$, $m =1, \ldots, M_1$ where
$b^m_{ij}$ denotes the crossing of the $i$-th strand over the $j$-th
strand at the location of the cusp $b_m$.  Enumerate the crossings of
$K$ as $q_1, \ldots, q_{M_2}$.  For $1 \leq m \leq M_2$, there are
corresponding generators $q_{ij}^m$ with $1 \leq i, j \leq n$ where
the $i$-th strand crosses over the $j$-th strand at $q_m$.  Finally,
at each of the right cusps $c_1, \ldots, c_{M_3}$ there is a lower
triangular matrix worth of generators (note the absence of the term
{\it strictly}) $c^m_{ij}$ with $1 \leq j \leq i \leq n$ where the
$i$-th strand crosses over the $j$-th strand in the Lagrangian
projection.  Generators of the form $c^m_{ii}$ correspond to right
cusps of $S(K, L)$.

Finally, there are generators corresponding to the crossings of $L$ which, following our convention for the path matrix, we enumerate as $p_1, p_2, \ldots, p_r$ from left to right.

\subsubsection{Grading} \label{sub:grading} For $j =1 , \ldots, n$, let $\eta_j \in \Z/\p$ denote the value of the Maslov potential for $L$ on the $j$-th strand at $x=0$.   The generators of $\A(S(K;A_\Lambda))$ have  degrees related to the degrees of generators of $\A(K, \*)$ as follows:
\[
\begin{array}{ccc} |b^m_{ij}| = \eta_i - \eta_j -1 & \quad & |c^m_{ij}| =  \eta_i - \eta_j + |c_m|    \\
									 |q^m_{ij}| = \eta_i - \eta_j + |q_m|. & \quad & \quad
\end{array}
\]
The crossings $p_i$ arising from $L$ have their degrees determined by the Maslov potential of $L$ as discussed above Definition \ref{def:PM}.

\subsubsection{Differential}
After a short preparation we will describe the differential for $\A\left(S(K, L)\right)$.
We collect generators corresponding to the left cusps, crossings, and right cusps of $K$ into matrices $B_{m}$, $Q_{m}$, and $C_{m}$ respectively.  The matrices $C_m$ (resp. $B_m$) are  lower triangular (resp. strictly upper triangular).
Let
\[
\Phi_L: \A(K, *) \rightarrow \mathit{Mat}_{n\times n}\left( \A\left(S(K, L)\right)\right)
\]
denote the algebra homomorphism which takes generators $q_m$ and $c_m$ to the corresponding matrices $Q_m$ and $C_m$ and takes $t$ to the path matrix $P_L$.

In our notation we will use $\partial$ to denote the differential of $\A(K, *)$, and $D$ for the differential in $\A\left(S(K,L)\right)$.  Moreover, let $\bar{D}: \mathit{Mat}_{n\times n}\left( \A\left(S(K, L)\right)\right) \rightarrow \mathit{Mat}_{n\times n}\left( \A\left(S(K, L)\right)\right)$ denote the map resulting from applying $D$ entry by entry.

\begin{theorem} \label{thm:matrixdiff} The differential $D: \A\left(S(K,L)\right) \rightarrow \A\left(S(K,L)\right)$ satisfies the following matrix formulas:
\begin{equation} \label{eq:DBm}
\bar{D} B_m = (B_m)^2
\end{equation}
\begin{equation} \label{eq:Q}
\bar{D} Q_m = \Phi_L( \partial q_m ) + O(B)
\end{equation}
\begin{equation} \label{eq:BB}
\bar{D} C_m =  \pi_{low}\circ\Phi_L( \partial c_m ) + O(B)
\end{equation}
In formulas (\ref{eq:Q}) and (\ref{eq:BB}), $O(B)$ denotes a matrix whose entries belong to the two-sided ideal generated by the $b^m_{ij}$.  In (\ref{eq:BB}), $\pi_{low}$ denotes the projection which replaces all of the entries above the main diagonal by $0$.

In addition, for any crossing $p_i$ arising from $L$, $Dp_i$ belongs to the two-sided ideal generated by the $b^m_{ij}$.
\end{theorem}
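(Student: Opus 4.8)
The plan is to compute the differential $D$ directly from the geometry of the resolved front diagram of $S(K,L)$, organizing the computation so that the holomorphic disks (immersed disks in the sense of Definition~\ref{def:diff}) are sorted according to where they live relative to the strands of $K$. The key structural fact I would exploit is that $S(K,L)$ is built from the $n$-copy of $K$ by inserting $L$ at the base point, so the disks fall into two classes: those that stay within the inserted pattern $L$, and those that interact with the crossings, left cusps, and right cusps coming from $K$. The matrices $B_m, Q_m, C_m$ package exactly the generators arising from a single feature of $K$, and the homomorphism $\Phi_L$ is designed so that multiplying these matrices together reproduces the disk-counting bookkeeping, with the appearance of the path matrix $P_L$ encoding precisely the monodromy of a strand as it passes through $L$ (this is exactly the content of K\'alm\'an's path matrix and is why $\Phi_L(t) = P_L$).

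First I would establish \eqref{eq:DBm}, which is a purely local statement at a left cusp: the generators $b^m_{ij}$ form a strictly upper triangular matrix, and the disks with a positive corner at some $b^m_{ij}$ are the standard ``triangle'' disks whose negative corners are at $b^m_{ik}$ and $b^m_{kj}$ for $i<k<j$. Counting these gives $\bar D B_m = (B_m)^2$, a computation that is essentially identical to the well-known formula for the DGA of the $n$-copy near a left cusp and requires only checking that no disks involving other features of the diagram contribute. Next I would turn to \eqref{eq:Q} and \eqref{eq:BB}. Here the essential idea is that a disk of $S(K,L)$ with a positive corner at a crossing $q^m_{ij}$ (resp.\ a right-cusp generator $c^m_{ij}$) either has all of its negative corners at generators of the same type $q^m$ (resp.\ $c^m$) and the $b$-generators, which after collecting into matrices produces the terms $\Phi_L(\partial q_m)$ and $\pi_{\mathrm{low}}\circ\Phi_L(\partial c_m)$, or else it passes through a left cusp and therefore acquires a negative corner at some $b^m_{ij}$, contributing to the $O(B)$ term. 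The appearance of $\Phi_L(\partial q_m)$ reflects the fact that a disk of $K$ contributing to $\partial q_m$ lifts to a family of disks in $S(K,L)$, one for each way of threading the boundary paths through the $n$ parallel strands; the bookkeeping of which strand each boundary arc travels along, including the passage through $L$, is exactly matrix multiplication under $\Phi_L$, with $t$ replaced by $P_L$. The projection $\pi_{\mathrm{low}}$ in \eqref{eq:BB} appears because the $c^m_{ij}$ are only defined for $j \le i$, so contributions that would land in the strictly-upper-triangular part are not present among the generators.

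Finally, for the statement about $Dp_i$, I would note that a disk with a positive corner at a crossing $p_i$ of $L$ has its boundary running along strands of $S(K,L)$, and any such disk whose boundary leaves the region occupied by $L$ must turn a corner at a left cusp of $K$ and hence acquire a negative corner at some $b^m_{jk}$; disks staying entirely within $L$ contribute terms that are already accounted for, but since $L$ has no cusps and the $p_i$-generators are genuinely generators of the DGA, the key point is that every contribution to $Dp_i$ that survives must interact with a left cusp, placing it in the ideal generated by the $b^m_{ij}$. The main obstacle I expect is the careful verification of the matrix-multiplication identity underlying \eqref{eq:Q} and \eqref{eq:BB}: one must check that the disks of $S(K,L)$ are in exact correspondence with the ``lifted'' disks predicted by $\Phi_L$, with no disks over- or under-counted and with the $t$-powers in $\partial q_m$ correctly converting into factors of the path matrix $P_L$. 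This is a bijective disk-counting argument that, while conceptually clear, demands precise tracking of boundary paths through the inserted pattern $L$ and through the base point; the commutativity/non-commutativity conventions for $t$ set up in Section~\ref{sec:CEDGA} are exactly what make this conversion consistent, so I would lean on those conventions and on the structure of $P_L = C_1\cdots C_r$ from the previous subsection to control the argument.
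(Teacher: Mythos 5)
Your outline follows the same general route as the paper (lifts of disks of $K$, threading through the $n$ parallel copies as matrix multiplication, passage through $L$ at the base point producing the path matrix $P_L$, and $\pi_{low}$ appearing because there are no generators $c^m_{ij}$ with $i<j$), but the step that separates the main terms from the $O(B)$ terms rests on a false implication. You assert that a disk which ``passes through a left cusp \ldots\ therefore acquires a negative corner at some $b^m_{ij}$.'' This is not true: the lifts of disks of $K$ whose boundary wraps around a left cusp of $K$ (for instance, the lift of the big disk of the unknot to any one copy in the $n$-copy) pass straight through the cluster of crossings $b^m_{ij}$ with no corner there, staying on a single copy, and these disks are precisely the ones that must be counted in $\Phi_L(\partial q_m)$ and $\pi_{low}\circ\Phi_L(\partial c_m)$. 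In fact essentially every disk of $S(K,L)$ meets a left-cusp region (its minimum $x$-coordinate is attained at a left cusp of the satellite), so your dichotomy does not partition the disks in the way you need. The paper's proof instead uses Mishachev's decomposition into \emph{thin} disks (confined to the annular neighborhood of $K$'s front in which the satellite is drawn) and \emph{thick} disks, together with a ``stick together'' map $s$ that projects a thick disk of $S(K,L)$ to an honest disk of $K$'s DGA; it is this projection, plus the enumeration of its lifts, that makes the ``exact correspondence'' you flag as the main obstacle into a theorem rather than a hope. The complementary geometric fact --- that a \emph{thin} (non-invisible) disk is forced to have a negative corner at some $b^m_{ij}$, because its two boundary arcs enter the $b$-cluster along parallel copies and can only meet at a common left cusp of $S(K,L)$ after one of them turns a corner --- is the argument that produces the $O(B)$ remainder, and it is absent from your proposal.

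The same gap affects your treatment of $Dp_i$. There are no disks ``staying entirely within $L$'' (since $L$ has no cusps, a disk's leftmost point cannot lie in the $L$-region), so nothing is ``already accounted for''; the content of the claim is that \emph{every} disk with positive corner at $p_i$ lies in the ideal generated by the $b^m_{jk}$. To prove this you must rule out a disk with positive corner at $p_i$ and no $b$-corners, i.e.\ a thick disk: its image under the stick-together map would be an immersed disk on $\pi_{xy}(K)$ all of whose corners are negative (the corner at $p_i$ projects to a smooth boundary point), which is impossible by the action/area argument. Your appeal to ``must turn a corner at a left cusp of $K$'' assumes exactly what needs to be shown. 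So while the skeleton of your argument matches the paper's, the two load-bearing pieces --- the thin/thick decomposition with the projection to $K$, and the minimum-$x$-coordinate argument at left cusps --- are missing, and the dichotomy you propose in their place is incorrect as stated.
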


\begin{proof}
The formula (\ref{eq:DBm}) is easily seen explicitly.

In establishing (\ref{eq:Q}) and (\ref{eq:BB}) we follow Mishachev \cite{Mish} and divide the disks involved in computing the differential for $\A\left(S(K,L)\right)$ into two disjoint sets: {\it thin disks} and {\it thick disks}.  By definition, thin disks are entirely contained in the neighborhood of the front diagram of $K$ where the satellite construction is carried out, and all remaining disks are considered to be thick.
 (At each right cusp $c^m_{ii}$ there is also a disk not visible on the front projection that contributes $1$ to $d c^m_{ii}$.  These disks arise from the twists added near right cusps when converting a front projection to a Lagrangian projection via the resolution procedure.  We will consider such disks to be thick and refer to them later in the proof as {\it invisible disks}.)

As in \cite{Mish}, we consider a ``stick together map'' $s: \R^2 \rightarrow \R^2$ that retracts the neighborhood of the front projection of $K$ containing $S(K,L)$ onto the front projection of $K$ itself. The image of a thick disk under $s$ coincides with a disk $f \in \Delta (a; b_1, \ldots, b_\ell)$ involved in the computation of the differential in $\A(K, *)$.  Moreover, we have the following:

\medskip

\noindent {\bf Claim:} If $f \in \Delta (a; b_1, \ldots, b_\ell)$ has $w(f) = t^{\alpha_0}b_1t^{\alpha_1} \cdots b_\ell t^{\alpha_\ell} \in \A(K,*)$, then the collection of thick disks corresponding to $f$ provides precisely the term $\Phi_L( t^{\alpha_0}b_1t^{\alpha_1} \cdots b_\ell t^{\alpha_\ell})$ (resp. $\pi_{\mathit{low}}\circ \Phi_L( t^{\alpha_0}b_1t^{\alpha_1} \cdots b_\ell t^{\alpha_\ell})$) in $\bar{D}\Phi_L(a)$ when $a$ is a crossing (resp. right cusp) of $K$.

\medskip

From this claim it follows that restricting the sum defining $D$ to thick disks produces precisely the first terms on the right hand side of (\ref{eq:Q}) and (\ref{eq:BB}).

To verify the claim, we need to consider ``lifts'' of $f: (D^2_\ell, \partial D^2_\ell) \rightarrow (\R^2, \pi_{xy}(K))$ to $\tilde{f} : (D^2_{\ell'}, \partial D^2_{\ell'}) \rightarrow (\R^2, \pi_{xy}(S(K,L)) )$ such that $s \circ \tilde{f} = f|_{D^2_{\ell'}}$.  Here, we use the notation from Section \ref{sec:CEDGA}: $r, s_1, \ldots, s_\ell$ are marked points along $\partial D^2$, and $D^2_\ell= D^2 \setminus \{r, s_1, \ldots, s_\ell\}$.  We need to allow the possibility that $\ell' \geq \ell$ since $\tilde{f}$ may have more negative corners than $f$ because of the additional possibility of negative corners at crossings of $L$.
For concreteness, let's assume that $a = q_m$ and $b_1=q_{m_1}, \ldots, b_\ell = q_{m_\ell}$ are all crossings.  Moreover, initially we treat the case in which $f(\partial D_\ell)$ does not contain the base point $*$ of $K$.

Such a lift $\tilde{f}$ arises from an appropriate choice of corners in $S(K,L)$.   If $\tilde{f}(r) = q^m_{ij}$, then since a neighborhood of $r$ must map to a $+$ corner at $q_m$ we see that the arc of $\partial D^2_\ell$ extending counter-clockwise from $r$ must initially map to the $i$-th copy of $K$ in $S(K,L)$.  (Recall that our subscripts $i,j$ always indicate the $i$-th copy of $K$ crossing over the $j$-th copy.)     When we arrive at the crossings of $S(K,L)$ corresponding to $q_{m_1}$ the boundary of $\tilde{f}$ remains on the $i$-th copy of $K$ and must turn around a $-$ corner of a crossing of the form $q^{m_1}_{i,k_1}$.  This  puts the next arc of $\partial D^2_\ell$ on the $k_1$-copy of $K$.  Similarly, the next corner of $\tilde{f}$ must be a $-$ corner at a crossing of the form $q^{m_2}_{k_1,k_2}$.  In total, $\tilde{f}$ has negative corners at $q^{m_1}_{i,k_1}, q^{m_2}_{k_1, k_2}, \ldots, q^{m_\ell}_{k_{\ell-1} j}$ where the choices of $1 \leq k_1, \ldots, k_{\ell-1} \leq n$ can be arbitrary.

We see that lifts of $f$ with initial positive corner at $q^m_{i,j}$ contribute the sum
\[
\sum_{k_1, \ldots, k_{\ell-1}} q^{m_1}_{i,k_1} q^{m_2}_{k_1, k_2}\cdots q^{m_\ell}_{k_{\ell-1} j}
\]
to $D q^m_{ij}$.  This is precisely the $ij$ entry of the product $Q_{m_1} \cdots Q_{m_\ell} = \Phi_L(q_{m_1} \cdots q_{m_\ell})$, so in this case the claim follows.

Now we consider how the collection of lifts $\tilde{f}$ changes when $f( \partial D^2_\ell)$ is allowed to intersect the base point.  Recall the notation $\gamma_0, \gamma_1, \ldots, \gamma_\ell$ for the images under $f$ of the circular arcs from $r$ to $s_1$, $s_1$ to $s_2$, $\ldots$, and $s_{\ell}$ to $r$ respectively.  Suppose $\gamma_i$ intersects $*$ positively.  Since we have assumed $*$ is located on a strand of $K$ which is oriented to the right, it follows that $\gamma_i$ crosses $*$ from left to right.  Moreover, since $\tilde{f}$ is orientation preserving, a neighborhood of $\gamma_i^{-1}(*)$ in $D^2_\ell$ maps to the region of $\R^2$ above $*$.  The corresponding portion of $\tilde{f}$, $\tilde{\gamma}_i$, will travel from left to right through the subset $L \subset S(K,L)$, with possibly some convex negative corners at crossings $p_i$ in $L$ at which the image of $\widetilde{f}$ necessarily covers the upper quadrant of $p_i$.  The $ij$-entry of the path matrix $P_L$ records precisely the products of negative corners that can result if $\tilde{\gamma}_i$ enters the subset $L$ along the $i$-th strand of $K$ and departs along the $j$-th strand of $K$.  This shows that, as desired, such an occurrence of $t$ in $w(f)$ requires placing $P_L = \Phi_L(t)$ between $Q_{m_{i-1}}$ and $Q_{m_i}$ when computing the terms in $\bar{D}Q_m$ which correspond to lifts of $f$.

A similar argument shows that appearances of $t^{-1}$ in $w(f)$ translate to $\Phi_L(t^{-1})$ in the computation of $\bar{D}Q_m$.  Indeed, if $\gamma_i$ intersects $*$ negatively, then $\tilde{\gamma}_i$ travels from right to left along $L$ with convex corners corresponding to the bottom quadrants of crossings of $L$.  As discussed after Definition \ref{def:PM} the matrix whose $ij$-entry corresponds to such paths is precisely $P_L^{-1} = \Phi_L(t^{-1})$.

The above analysis of thick disks applies equally well to establish the term $\pi_{\mathit{low}}\circ \Phi_L( t^{\alpha_0}b_1t^{\alpha_1} \cdots b_\ell t^{\alpha_\ell})$ in $\bar{D}C_m$.  The projection $\pi_{\mathit{low}}$ appears simply because there are no crossings $c^m_{ij}$ with $i <j$.  Note also that the invisible disks at right cusps $c^m_{ii}$ mentioned earlier in the proof contribute the identity matrix $I$ to $\bar{D}C_m$ and this corresponds to $\pi_{\mathit{low}}\circ \Phi_L$ applied to the $1$ in $\partial c_m$ arising from the invisible disk in $K$ at $c_m$.

 The only claim made in the Theorem about $Dp_i$ and the remaining terms of $\bar{D}Q_m$ and $\bar{D}C_m$ is that they belong to the two-sided ideal generated by the $b^m_{ij}$.  All of these items correspond to thin disks.  Therefore, the proof is completed by showing that any thin disk must have a negative corner at some $b^m_{ij}$.

In general, any disk other than an invisible disk at a right cusp will attain its minimum $x$-coordinate at a left cusp.  (As a consequence of the resolution construction negative corners can never provide a minimum $x$-coordinate and only the invisible disks have positive corners at the right quadrant of a crossing.)  With our assumption that $L$ has no cusps, the only left cusps of $S(K,L)$ are those corresponding to a left cusp $b_m$ of $K$.  They are accompanied on their right by a collection of crossings of the form $b^m_{ij}$.

Now, in order to reach its left cusp, the boundary of a thin disk will enter the collection of crossings $b^m_{ij}$ along two parallel strands of $S(K,L)$ that correspond to the same strand of $K$.  For these strands to meet up at a common left cusp of $S(K,L)$, one of them must have a negative corner at one of the $b^m_{ij}$.
\end{proof}

\begin{remark}  When $L$ consists of $n$ horizontal lines so that $S(K,L)$ is the $n$-copy we can be more explicit about the terms $O(B)$.  In this case, we have
\[
\bar{D} Q_m = B_{m'} Q_m + Q_m B_{m''} + \Phi_L( \partial q_m ), \mbox{ and}
\]
\[
\bar{D} C_m = B_{m'} C_m + C_m B_{m''} + \pi_{low}\circ\Phi_L( \partial c_m ).
\]
Here, $m'$ (resp. $m''$) are such that beginning at the upperstrand (resp. lowerstrand) of $q_m$ or $c_m$ and heading left the first left cusp reached will be $b_{m'}$ (resp. $b_{m''}$).
\end{remark}

\subsection{Proof of Theorem \ref{thm:tech}}
\label{ssec:pftech}

Throughout this subsection we fix notations as in the statement of Theorem \ref{thm:tech}:  the $\p$-graded vector space $V_L$ has basis vectors $e_1, \ldots, e_n$ where the degree of $e_i$ is given by the Maslov potential of $L$ at $x=0$ on the $i$-th strand.  Also, $M_L$ denotes a matrix obtained from the path matrix $P_L$ by assigning values in $\Z/2$ to the variables $p_i$ with the restriction that $p_i = 0$ unless $|p_i| = 0$ mod $\p$.

\begin{lemma}  The satellite $S(K,L)$ has a $\p$-graded reduced ruling if and only if there exists a $\p$-graded augmentation of $\A(S(K,L))$ such that $\varepsilon(B_m) = 0$ for all $m$.
\end{lemma}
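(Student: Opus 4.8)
The plan is to obtain both implications directly from the precise ruling--augmentation correspondence recorded in Theorem~\ref{prop:bg}, applied to the plat front $S(K,L)$ and the DGA $\A(S(K,L))$ of its resolution. The geometric fact that makes this work is that, with $K$ in plat position, every left cusp of $K$ lies strictly to the left of all crossings and right cusps of $K$. I would first check that this forces all of the crossings $b^m_{ij}$ (those coming from left cusps of $K$) to occur strictly to the left of all the remaining crossings $q^m_{ij}$, $c^m_{ij}$, and $p_i$ of $S(K,L)$. For the $q$'s and $c$'s this is immediate from plat position; for the crossings $p_i$ of the pattern $L$, it holds provided the base point $*$ is placed on a rightward strand lying to the right of every left cusp of $K$, which is consistent with the standing assumptions of this section and can always be arranged. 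I would also note at the outset that condition~(i) in the definition of a reduced ruling is automatic in the present setting, since $L$ has no cusps and meets $x=0$ in $n$ points; so being reduced amounts exactly to having no switches at the $b^m_{ij}$.

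Granting this, the forward implication runs as follows. Starting from a $\p$-graded reduced ruling $\rho$ of $S(K,L)$, the absence of switches at the (leftmost) crossings $b^m_{ij}$ means that the first switch of $\rho$, if any, lies strictly to the right of all of them. Feeding $\rho$ into Theorem~\ref{prop:bg}(ii) produces a $\p$-graded augmentation whose first augmented crossing coincides with the first switch of $\rho$, hence also lies strictly to the right of all $b^m_{ij}$; since those are the leftmost crossings, none of them is augmented and $\varepsilon(B_m)=0$ for every $m$.

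For the converse I would reverse this reasoning using part~(i). Given a $\p$-graded augmentation $\varepsilon$ with $\varepsilon(B_m)=0$, no $b^m_{ij}$ is augmented, so the first augmented crossing lies strictly to the right of all of them; Theorem~\ref{prop:bg}(i) then yields a $\p$-graded ruling whose first switch occurs at or to the right of that crossing, and hence strictly to the right of every $b^m_{ij}$. This ruling has no switches among the $b^m_{ij}$ and, combined with the automatic condition~(i), is therefore reduced. In both directions the degenerate case (a ruling with no switches, respectively an augmentation vanishing on all crossings) should be dispatched separately, the conclusion being immediate there.

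I expect the only real obstacle to be the geometric bookkeeping of the first paragraph: verifying carefully that the $b^m_{ij}$ genuinely constitute the leftmost crossings of $S(K,L)$, including the placement of the base point so that the pattern crossings $p_i$ sit to their right. Once that normalization is in place, the argument is purely a comparison of ``leftmost switch'' with ``leftmost augmented crossing'' through Theorem~\ref{prop:bg}, and no further computation is needed.
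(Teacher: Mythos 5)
Your proposal is correct and takes essentially the same approach as the paper: the paper's entire proof of this lemma is the one-line citation ``This is an application of Theorem~\ref{prop:bg}.'' The details you supply---that in plat position the crossings $b^m_{ij}$ are the leftmost crossings of $S(K,L)$, so that being reduced means exactly having no switches among them, and comparing the first switch with the first augmented crossing via parts (ii) and (i) of that theorem gives the two implications---are precisely what the paper's citation leaves implicit.
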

\begin{proof}
This is an application of Theorem \ref{prop:bg}.
\end{proof}

Consider the Lagrangian projection of $K$ where in addition to our original base point, $* = *_0$, we add base points $*_1, \ldots, *_{M_3}$ (cyclically ordered) at all right cusps.  We use $\widehat{\partial}$ to denote the differential in the corresponding multi-pointed DGA for $K$.  (See Section \ref{sec:CEDGA}.)  The relation of $\widehat{\partial}$ with the differential $\partial$ of $\A(K, *)$ is simply
\[
\widehat{\partial}q_m = \partial q_m \quad \mbox{and} \quad \widehat{\partial}c_m = \partial c_m + 1 + t_m,
\]
provided that we identify $t_0$ and $t$ to view $\A(K,*)$ as the subalgebra $\A(K,*_0) \subset \A(K, *_0, \ldots, *_{M_3})$.
\begin{lemma}  There exists a $\p$-graded augmentation of $\A(S(K,L))$ such that $\varepsilon(B_m) = 0$ for all $m$ if and only if there exists has a $\p$-graded representation, $\widehat{f} :(\A(K, *_0, \ldots, *_{M_3}), \widehat{\partial}) \rightarrow (\End(V_L), 0)$, such that $t_0 \mapsto M_L$ and $t_i \mapsto U_i$ with $U_i$ upper triangular for all $i \geq 1$.
\end{lemma}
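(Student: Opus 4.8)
The plan is to route both implications through a single $\p$-graded algebra homomorphism $f\colon(\A(K,*),\partial)\to(\End(V_L),0)$ and to extract the two defining conditions of an augmentation from the matrix formulas of Theorem~\ref{thm:matrixdiff}. Starting from an augmentation $\varepsilon$ of $\A(S(K,L))$ with $\varepsilon(B_m)=0$, I would set $f:=\varepsilon\circ\Phi_L$ (applying $\varepsilon$ entrywise), so that $f(q_m)=\varepsilon(Q_m)$, $f(c_m)=\varepsilon(C_m)$, and $f(t)=\varepsilon(P_L)=M_L$; the last equality holds because $\varepsilon$ sends each $p_i$ to a scalar, and the grading forces $\varepsilon(p_i)=0$ unless $|p_i|\equiv 0$. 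Conversely, from $\widehat f$ one restricts along $\A(K,*)\subset\A(K,*_0,\dots,*_{M_3})$ to get $f$ with $f(t)=\widehat f(t_0)=M_L$. The heart of the translation is that evaluating $f$ on $\bar D Q_m=\Phi_L(\partial q_m)+O(B)$ and $\bar D C_m=\pi_{low}\circ\Phi_L(\partial c_m)+O(B)$, and using that $\varepsilon(B_m)=0$ annihilates every $O(B)$ term (and that $\bar D B_m=(B_m)^2$ and each $Dp_i$ lie in the ideal generated by the $b^m_{ij}$), shows that $\varepsilon$ is an augmentation if and only if $f(\partial q_m)=0$ and $\pi_{low}(f(\partial c_m))=0$ for all $m$.

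On the representation side, $\widehat f\circ\widehat\partial=0$ together with $\widehat\partial q_m=\partial q_m$ and $\widehat\partial c_m=\partial c_m+1+t_m$ says exactly that $f(\partial q_m)=0$ and $f(\partial c_m)=I+U_m$, where $U_m:=\widehat f(t_m)$. Over $\Z/2$ every invertible upper triangular matrix has $1$'s on the diagonal, so requiring $U_m$ to be upper triangular and invertible is equivalent to requiring $I+U_m$ to be strictly upper triangular, i.e. to $\pi_{low}(f(\partial c_m))=0$. Thus the augmentation condition and the representation condition impose \emph{identical} constraints on $f$, and the dictionary is simply $U_m=I+f(\partial c_m)$, a degree-$0$ matrix matching $|t_m|=0$.

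For the forward direction this gives $\widehat f$ at once: extend $f$ by $\widehat f(t_m):=I+f(\partial c_m)$, which is upper triangular and invertible, and verify $\widehat f\circ\widehat\partial=0$ on each $c_m$ using $\widehat f(\partial c_m)=f(\partial c_m)$ and $1+1=0$. Here $\widehat f(c_m)=\varepsilon(C_m)$ is lower triangular, but that is harmless since a representation may send $c_m$ to an arbitrary matrix.

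The main obstacle is the reverse direction, where the given $\widehat f(c_m)$ need not be lower triangular, whereas the satellite only carries the lower triangular generators $c^m_{ij}$ ($j\le i$). The natural attempt is to set $\varepsilon(Q_m):=\widehat f(q_m)$ and $\varepsilon(C_m):=\pi_{low}(\widehat f(c_m))$; then $f':=\varepsilon\circ\Phi_L$ evaluates the right-cusp generators using only the lower triangular part of $\widehat f(c_m)$, since $\Phi_L(c_{m'})=C_{m'}$ is already lower triangular. The conditions $\varepsilon(\bar D Q_m)=0$ and $\varepsilon(\bar D C_m)=0$ then read $f'(\partial q_m)=0$ and $\pi_{low}(f'(\partial c_m))=0$, and what I must check is that replacing each $\widehat f(c_m)$ by its lower triangular part alters neither $f(\partial q_m)$ nor $f(\partial c_m)$. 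Equivalently, the needed combinatorial input is that, in plat position, the differentials $\partial q_m$ and $\partial c_m$ in $\A(K,*)$ contain no right-cusp generator $c_{m'}$: for $\partial c_m$ this is clear, as a disk with positive corner at a rightmost cusp cannot reach a second rightmost cusp, and I expect the analogous statement for $\partial q_m$ to follow from the plat geometry of the resolved diagram. Granting this, $f'$ satisfies exactly the equations $f$ did, so $\varepsilon$ is a well-defined $\p$-graded augmentation with $\varepsilon(B_m)=0$. Confirming that the strictly upper triangular part of $\widehat f(c_m)$ is genuinely irrelevant (or, failing that, first normalizing $\widehat f$ to an equivalent representation with lower triangular $\widehat f(c_m)$ that fixes $M_L$ and each $U_m$) is where the real work lies; grading is preserved throughout, as $\Phi_L$ respects the gradings recorded in Section~\ref{sub:grading}.
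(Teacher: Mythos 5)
Your proposal is correct and takes essentially the same route as the paper's proof: the forward direction extends $\bar{\varepsilon}\circ\Phi_L$ by setting $\widehat{f}(t_m):=I+\bar{\varepsilon}\circ\Phi_L(\partial c_m)$, and the reverse direction defines $\varepsilon$ via $\bar{\varepsilon}(B_m)=0$, $\bar{\varepsilon}(Q_m)=\widehat{f}(q_m)$, $\bar{\varepsilon}(C_m)=\pi_{low}(\widehat{f}(c_m))$, $\bar{\varepsilon}(P_L)=M_L$, exactly as in the paper. The plat-position fact you flag as the remaining work---that $\partial q_m$ and $\partial c_m$ contain no right-cusp generators, so truncating $\widehat{f}(c_m)$ to its lower triangular part is harmless---is precisely what the paper invokes in one line (via its subalgebra $\A'$ generated by the crossings and $t_0$), and it holds because a negative corner at a right cusp would force the immersed disk to extend to the right of the rightmost point of the diagram, which is impossible since an immersion is an open map.
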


\begin{proof} $(\Rightarrow)$ Assume $\varepsilon : \A(S(K,L)) \rightarrow \Z/2$ is such an augmentation.  We let $\bar{\varepsilon}: \mathit{Mat}_{n\times n}\left( \A\left(S(K, L)\right)\right) \rightarrow \mathit{Mat}_{n \times n}(\Z/2) \cong \End(V_L)$ denote the algebra homomorphism resulting from applying $\varepsilon$ entry by entry.

We define the desired representation $\widehat{f} :\A(K, *_0, \ldots, *_{M_3}) \rightarrow \End(V_L)$ in two steps.  First, we require that on the subalgebra $\A(K, *_0) \subset \A(K, *_0, \ldots, *_{M_3})$ we have
\begin{equation} \label{eq:restrict}
\widehat{f}|_{\A(K, *_0)} = \bar{\varepsilon} \circ \Phi_L.
\end{equation}
Explicitly, $\widehat{f}(q_m) = \bar{\varepsilon}(Q_m)$; $\widehat{f}(c_m) = \bar{\varepsilon}(C_m)$; and $\widehat{f}(t_0) = \bar{\varepsilon}(P_L)$.  Note that since $\varepsilon$ is $\p$-graded $\widehat{f}(t_0)$ has the required form $M_L$.  On the remaining generators $t_m$ with $m=1, \ldots, M_3$ we define
\[
\widehat{f}(t_m) = I + \bar{\varepsilon} \circ \Phi_L( \partial c_m).
\]
The computation
\[
0 = \bar{\varepsilon}\circ \bar{D}(C_m) = \bar{\varepsilon}\circ \pi_{\mathit{low}} \circ \Phi_L (\partial c_m) = \pi_{\mathit{low}}(\bar{\varepsilon} \circ \Phi_L( \partial c_m))
\]
shows that $\widehat{f}(t_m)$ is non-singular and upper triangular as required.

To check that $\widehat{f}$ is $\p$-graded it suffices to show that $|\widehat{f}(s)| =|s|$ on any generator $s$.  (The grading on $\End(V_L)$ is defined as in Section \ref{ssec:rep}.)  First we treat the case when $s$ has the form $q_m$.  Note that the same argument applies when $s=c_m$.

  On a basis vector $e_j$ we have
\[
\widehat{f}(q_m) e_j = \sum_i \varepsilon(q^m_{i,j}) e_i.
\]
Since $\varepsilon$ is $\p$-graded, if $\varepsilon(q^m_{i,j}) \neq 0$ we have
\[
0 = |q^m_{i,j}| = \eta_i - \eta_j + |q_m|
\]
which shows that
\[
|e_j| - |e_i| = |q_m|.
\]
It follows from the definition that $\widehat{f}(q_m)$ has degree $|q_m|$ in $\End(V)$.

Next, we verify that $|\widehat{f}(t_0)| =0$.  Notice that if the $i, j$-entry of $\bar{\varepsilon}(P_L)$ is non-zero, then there is a path from $x=0$ to $x=1$ which starts on the $i$-th strand, ends on the $j$-th strand, and only turns along crossings which have equal Maslov potential.  Clearly, the Maslov potential is constant along such a path, so $\eta_i = \eta_j$ which implies that $\widehat{f}(t_0)$ preserves the grading on $V_L$.

At this point we have shown that $\widehat{f}$ is degree preserving on the sub-algebra $\A(K, *_0)$.  Since $\partial$ lowers degree by $1$ on $\A(K, *_0)$, it follows now that both the terms defining $\widehat{f}(t_m)$ have degree $0$.

It remains to show that $\widehat{f} \circ \widehat{\partial} = 0$, and it suffices to verify this equality on generators with the case of $t_i$ being immediate for $i = 0, \ldots, M_3$.  For a crossing $q_m$ of $K$, $\widehat{\partial}(q_m) = \partial(q_m) \in \A(K, *_0)$, so using (\ref{eq:restrict}), (\ref{eq:Q}), and the hypothesis that $\bar{\varepsilon}(B_m) = 0$ we can compute
\[
\widehat{f} \circ \widehat{\partial}(q_m) = \bar{\varepsilon} \circ\Phi_L \circ \partial(q_m) = \bar{\varepsilon} \circ \bar{D} (Q_m) = 0.
\]
For a cusp $c_m$ of $K$, we have $\widehat{\partial}(c_m) = \partial(c_m) + 1 + t_m$, so we compute
\[
\widehat{f} \circ \widehat{\partial}(c_m) = \bar{\varepsilon} \circ\Phi_L \circ \partial(c_m) + I + \widehat{f}(t_m) =  0.
\]
(The final equality is just the definition of $\widehat{f}(t_m)$.)

\smallskip

\noindent $(\Leftarrow)$  Suppose now that the representation $\widehat{f}$ is given.  We define $\varepsilon: \A(S(K,L)) \rightarrow \Z/2$ by requiring that the corresponding homomorphism of matrix algebras $\bar{\varepsilon}: \mathit{Mat}_{n\times n}\left( \A\left(S(K, L)\right)\right) \rightarrow \mathit{Mat}_{n \times n}(\Z/2)$ satisfies
\begin{equation} \label{eq:TheAug}
\begin{array}{ccc} \bar{\varepsilon}(B_m) = 0, & & \bar{\varepsilon}(Q_m) = \widehat{f}(q_m), \\
									\bar{\varepsilon}(C_m) = \pi_{\mathit{low}}( \widehat{f}(c_m)), & \mbox{and} & \bar{\varepsilon}(P_L) = M_L.
\end{array}
\end{equation}
These formulas uniquely specify $\varepsilon$ except possibly on the generators $p_i$ where the hypothesis on the matrix $M_L$ allows us to fix values for  $\varepsilon(p_i)$ which satisfy the matrix equation and have $\varepsilon(p_i) = 0$ unless $|p_i|=0$.
That $\varepsilon$ is $\p$-graded on generators of the form $q^m_{ij}$ and $c^m_{ij}$ is verified in a similar manner to the corresponding portion of the proof of the forward implication.

To complete the proof, we show that $\varepsilon \circ D =0$.  This is immediate on generators of the form $b^m_{ij}$ or $p_i$ since in either case $D$ applied to such a generator belongs to the two-sided ideal generated by the $b^m_{ij}$.  For the remaining generators it suffices to verify that $\bar{\varepsilon}\circ \bar{D} (Q_m) = \bar{\varepsilon}\circ \bar{D} (C_m) = 0$.

Let $\A'$ denote the subalgebra of $\A(K, *_0, \ldots, *_{M_3})$ generated by crossings $q_m$ and the original base point $*_0$.  Note that since $K$ is in plat position, we have $\partial q_m, \partial c_m \in \A'$ for any $m$.  Furthermore, Equation (\ref{eq:TheAug}) shows that $\widehat{f}|_{\A'} = \bar{\varepsilon} \circ \Phi_L|_{\A'}$.  Now, using these observations and (\ref{eq:Q}) we compute
\[
\bar{\varepsilon}\circ \bar{D} (Q_m) = \bar{\varepsilon}\circ \Phi_L( \partial q_m) = \widehat{f}(\partial q_m) = \widehat{f} \circ \widehat{\partial} q_m = 0.
\]
For a right cusp $c_m$, since $\partial c_m = \widehat{\partial} c_m + 1 + t_m$ we have
\[
\bar{\varepsilon}\circ \bar{D} (C_m) = \bar{\varepsilon}\circ \pi_{\mathit{low}} \circ \Phi_L( \partial c_m) =\pi_{\mathit{low}}\circ  \widehat{f}(\partial c_m) =
\]
\[
 \pi_{\mathit{low}}( \widehat{f} \circ \widehat{\partial} c_m + \widehat{f}(1 + t_m) ) = \pi_{\mathit{low}}(I + U_m) = 0.
\]
\end{proof}

\begin{lemma} The algebra $\A(K, *_0, \ldots, *_{M_3})$ has a $\p$-graded representation $\widehat{f}:(\A(K, *_0, \ldots, *_{M_3}), \partial) \rightarrow (\End(V_L), 0)$ such that $\widehat{f}(t_0)=M_L$ and $\widehat{f}(t_i)=U_i$ with $U_i$ upper triangular for all $i \geq 1$, if and only if there exists a  $\p$-graded representation $f: \A(K,*) \rightarrow (\End(V_L), 0)$ such that $f(t)=M_L U$ with $U$ upper triangular.
\end{lemma}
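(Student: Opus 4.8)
The plan is to prove both implications after reducing, via Theorem~\ref{prop:multi1}, to the configuration in which all of the auxiliary base points $*_1,\ldots,*_{M_3}$ have been slid off the right cusps and into a small neighborhood of $*_0$. The isomorphism supplied by Theorem~\ref{prop:multi1} is a grading-preserving DGA isomorphism fixing every $t_i$, so it identifies representations satisfying $\widehat f(t_0)=M_L$ with each $\widehat f(t_i)$ upper triangular in the two configurations; hence there is no loss of generality in working in this ``clustered'' configuration. Its advantage is twofold: the invisible disk at each right cusp no longer passes through $*_m$, so the coupling $\widehat\partial c_m = \partial c_m + 1 + t_m$ disappears and each $t_m$ enters $\widehat\partial$ only inside the combination $t_0t_1\cdots t_{M_3}$; and by Theorem~\ref{prop:multi2} the comparison map takes the transparent form $\phi:(\A(K,*),\partial)\to(\A(K,*_0,\ldots,*_{M_3}),\widehat\partial)$ with $\phi(q_m)=q_m$, $\phi(c_m)=c_m$, and $\phi(t)=t_0t_1\cdots t_{M_3}$, a grading-preserving DGA homomorphism.

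For the implication from $\widehat f$ to $f$, I would simply set $f=\widehat f\circ\phi$. This is a composite of grading-preserving DGA maps, so $f\circ\partial=0$ and $f$ is $\p$-graded, while $f(t)=\widehat f(t_0t_1\cdots t_{M_3})=M_L\,U_1\cdots U_{M_3}$. A product of invertible upper triangular matrices is upper triangular, so $f(t)=M_L U$ with $U=U_1\cdots U_{M_3}$, as required.

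For the reverse implication, the key point is that in the clustered configuration the matrices $\widehat f(t_0),\ldots,\widehat f(t_{M_3})$ are constrained only through the single equation $\widehat f(t_0)\cdots\widehat f(t_{M_3})=f(t)$, because the $t_m$ occur in $\widehat\partial$ only inside $\phi(t)$. Given $f$ with $f(t)=M_LU$, I would therefore define $\widehat f$ on generators by $\widehat f(q_m)=f(q_m)$, $\widehat f(c_m)=f(c_m)$, $\widehat f(t_0)=M_L$, $\widehat f(t_1)=U$, and $\widehat f(t_i)=I$ for $i\geq 2$ (a connected plat knot has at least one right cusp, so $t_1$ exists; each of these matrices is invertible and upper triangular, and $U=M_L^{-1}f(t)$ has degree $0$ because $|t|=0$ and $M_L$ is grading-preserving). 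By construction $\widehat f\circ\phi=f$ as algebra maps, whence $\widehat f(\widehat\partial q_m)=\widehat f(\phi(\partial q_m))=f(\partial q_m)=0$ and likewise for the $c_m$; thus $\widehat f$ is a $\p$-graded representation of the desired form. Transporting it back across the isomorphism of Theorem~\ref{prop:multi1} produces the representation of the at-cusps DGA required by the statement.

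The main obstacle is exactly what the clustering is designed to avoid: in the original (at-cusps) configuration the relation $\widehat\partial c_m=\partial c_m+1+t_m$ forces $\widehat f(t_m)=I+\widehat f(\partial c_m)$, so one cannot assign the $\widehat f(t_m)$ independently, and a direct construction would have to reconcile $\widehat f(t_0)=M_L$ with the hypothesis $f(t)=M_LU$ while simultaneously checking that every forced $\widehat f(t_m)$ is upper triangular and invertible. Passing to the clustered configuration, where the $t_m$ become essentially free, dissolves this bookkeeping; the only facts that then need verification are the ones already noted --- that base-point sliding is a grading-preserving DGA isomorphism fixing all $t_i$, and that $U=M_L^{-1}f(t)$ is upper triangular of degree $0$.
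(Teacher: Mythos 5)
Your proposal is correct and follows essentially the same route as the paper: the forward direction is the composition with the map $t \mapsto t_0 t_1 \cdots t_{M_3}$ of Theorem~\ref{prop:multi1}/\ref{prop:multi2}, and the converse is handled by clustering the base points near $*$ (where the $t_i$ are unconstrained except through their product), assigning $t_0 \mapsto M_L$, $t_1 \mapsto U$, $t_i \mapsto I$ for $i \geq 2$, and transporting back across the isomorphism of Theorem~\ref{prop:multi1}. Your explicit checks of gradedness of $U = M_L^{-1}f(t)$ and of the upper-triangularity of the product $U_1 \cdots U_{M_3}$ are details the paper leaves implicit, but the argument is the same one.
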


\begin{proof}
Given such a representation $\widehat{f}$ of $\A(K, *_0, \ldots, *_{M_3})$ we obtain the required representation of $\A(K, *)$ as the composition $f = \widehat{f} \circ \phi$ where $\phi : \A(K, *) \rightarrow \A(K, *_0, \ldots, *_{M_3})$ is the homomorphism guaranteed by Theorem \ref{prop:multi2}.

For the converse, assume that $f : \A(K, *) \rightarrow \End(V_L)$ is a representation with $f(t) = M_L U$.  Place base points $a_0, \ldots, a_{M_3}$ in a small neighborhood of the original base point $*$.  Then there is a DGA homomorphism $g : \A(K, *) \rightarrow \A(K, a_0, \ldots, a_{M_3})$ defined by fixing all generators other than $t$ and setting $g(t) = t_0 \cdots t_{M_3}$.  We define an algebra homomorphism $\widehat{f}:\A(K, a_0, \ldots, a_{M_3}) \rightarrow \End(V_L)$ by
\[
\widehat{f}(t_0) = M_L, \,\, \widehat{f}(t_1) = U, \,\, \widehat{f}(t_i) = I \mbox{ for $i>1$,}
\]
and  $\widehat{f}(s) = f(s)$ on the remaining generators.  Note that, $\widehat{f} \circ g = f$.  We verify that $\widehat{f}\circ \widehat{d}=0$ by checking the equality on generators with the case of the $t_i$ being immediate since $\widehat{d}(t_i) = 0$.  For any other generator, $s$, we can compute
\[
\widehat{f} \circ \widehat{\partial}(s) = \widehat{f} \circ \widehat{\partial}\circ g(s) = \widehat{f} \circ g \circ \partial(s) = f \circ  \partial (s) = 0.
\]

Finally, we obtain a representation of $\A(K, *_0, \ldots, *_{M_3})$ by composing $\widehat{f}$ with the isomorphism guaranteed by Theorem \ref{prop:multi1}.
\end{proof}

\section{Ungraded Two-Dimensional Representations of $\A(K,\ast)$}
\label{sec:2d}

As mentioned earlier, Theorem~\ref{thm:MainResult} generalizes
the known result that a Legendrian knot $K \subset \R^3$ has a
$\p$-graded ruling if and only if its DGA $(\A,\d)$ has a $\p$-graded
augmentation. In this section, we analyze the next simplest case of
Theorem~\ref{thm:MainResult}, which provides a correspondence between
rulings of doubles of $K$ and $2$-dimensional representations of
$(\A(K,\ast),\d)$. For the sake of simplicity, we will specialize to
the case $\p=1$, in which all representations and all rulings are
ungraded, and we will suppress any occurrences of $\p$ or of the
grading in our notation.

In this case, Theorem~\ref{thm:MainResult} states that
$(\A(K,\ast),\d)$ has an ungraded $2$-dimensional representation if
and only if $\widetilde{R}_{S(K,A_\Lambda)}(z) \neq 0$ for at least one of the two
partitions of $2$, $\Lambda
= (2)$ and $\Lambda = (1,1)$. For $\Lambda = (1,1)$, the only
generalized rulings of $A_{(1,1)}$ are the trivial ruling with no
fixed points (where the two strands of $A_{(1,1)}$ are paired) and the
trivial ruling with all fixed points. It follows from
Theorem~\ref{thm:bijection} or \eqref{eq:BijFormula} that we have
\[
R_{S(K,A_{(1,1)})}(z)  = 1 +
\widetilde{R}_{S(K,A_{(1,1)})}(z) = R_{A_{(1,1)}}(z) + \widetilde{R}_{S(K,A_{(1,1)})}(z).
\]
Thus $K$ is $A_{(1,1)}$-compatible if and only if
$S(K,A_{(1,1)})$ has a reduced ruling.

For $\Lambda=(2)$, the only generalized ruling of $A_{(2)}$ is the
trivial ruling where all points are fixed by the involution. From
\eqref{eq:BijFormula}, we have
\[
R_{S(K,A_{(2)})}(z) = \widetilde{R}_{S(K,A_{(2)})}(z)
= R_{A_{(2)}}(z) + \widetilde{R}_{S(K,A_{(2)})}(z).
\]
In this case as well, $K$ is $A_{(2)}$-compatible if and only if
$S(K,A_{(2)})$ has a reduced
ruling.

Theorem \ref{thm:tech} then yields the
following statement for $2$-dimensional representations.

\begin{theorem}
Let $K$ be a Legendrian knot in $\R^3$ with DGA $(\A(K,\ast),\d)$.
\label{thm:2dim}
\begin{enumerate}
\item
$K$ is $A_{(1,1)}$-compatible if and only if $(\A(K,\ast),\d)$ has a
two-dimensional (ungraded) representation sending $t$ to an upper
triangular $2\times 2$ matrix $\left(\begin{smallmatrix} 1 & * \\ 0 &
  1 \end{smallmatrix}\right) \in GL_2(\Z/2)$.
\item
$K$ is $A_{(2)}$-compatible if and only if $(\A(K,\ast),\d)$ has a
two-dimensional representation sending $t$ to a matrix $M_{A_2} U$,
where $U$ is upper triangular and $M_{A_2}$ is of the form
$\left(\begin{smallmatrix} * & 1 \\ 1 &
  0 \end{smallmatrix}\right)$.
\end{enumerate}
\end{theorem}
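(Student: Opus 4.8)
The plan is to deduce Theorem~\ref{thm:2dim} directly from Theorem~\ref{thm:tech}, using the two equivalences established in the paragraphs immediately preceding the theorem statement: that $K$ is $A_{(1,1)}$-compatible (resp.\ $A_{(2)}$-compatible) if and only if $S(K,A_{(1,1)})$ (resp.\ $S(K,A_{(2)})$) admits a reduced ruling. Since we work in the ungraded case $\p = 1$, all grading hypotheses in Theorem~\ref{thm:tech} and in the definition of $M_L$ are vacuous; in particular the requirement that $p_i \mapsto 0$ unless $|p_i| \equiv 0$ imposes no restriction. Both patterns $A_{(1,1)}$ and $A_{(2)}$ meet the line $x=0$ in exactly two points, so in each case $V_L$ is two-dimensional and the representation $f$ produced by Theorem~\ref{thm:tech} is a two-dimensional representation of $(\A(K,\ast),\d)$.

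The only computation needed is to read off the form of $f(t) = M_L U$ from the relevant path matrices. First I would record that $A_{(1,1)} = A_1\cdot A_1$ is a pair of parallel strands with no crossings, so its path matrix is the identity and $M_{A_{(1,1)}} = \left(\begin{smallmatrix} 1 & 0 \\ 0 & 1 \end{smallmatrix}\right)$; hence $f(t) = U$ is upper triangular. Because $t$ is invertible in $\A$, the map $f(t)$ lies in $GL_2(\Z/2)$, which forces its diagonal entries to be $1$, so $f(t)$ has the form $\left(\begin{smallmatrix} 1 & * \\ 0 & 1 \end{smallmatrix}\right)$, giving part (1). For part (2), the basic front $A_{(2)} = A_2$ has a single crossing $p_1$, and by Example~\ref{ex:basicpath} its path matrix is $P_{A_2} = \left(\begin{smallmatrix} p_1 & 1 \\ 1 & 0 \end{smallmatrix}\right)$; thus $M_{A_2}$ is obtained by assigning $p_1$ a value in $\Z/2$, yielding a matrix of the form $\left(\begin{smallmatrix} * & 1 \\ 1 & 0 \end{smallmatrix}\right)$, and $f(t) = M_{A_2} U$ with $U$ upper triangular, exactly as claimed.

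Both directions of each biconditional are packaged in the ``if and only if'' of Theorem~\ref{thm:tech}, so no separate forward or backward argument is required once the path matrices are identified. I do not expect any genuine obstacle: the substance of the statement lies entirely in Theorem~\ref{thm:tech} and in the reduction of compatibility to reduced rulings, both already in hand. The only points requiring care are the bookkeeping in part (1)---observing that invertibility over $\Z/2$ upgrades a general upper triangular $U$ to one with unit diagonal---and applying the path matrix conventions (top-to-bottom strand labeling, corners taken around upper quadrants) consistently, so that $M_{A_2}$ emerges in the stated antidiagonal-$1$ form rather than its transpose.
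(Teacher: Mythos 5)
Your proposal is correct and follows essentially the same route as the paper: the paper likewise reduces $A_{(1,1)}$- and $A_{(2)}$-compatibility to the existence of reduced rulings of the corresponding satellites (via Theorem~\ref{thm:bijection}/\eqref{eq:BijFormula}) and then invokes Theorem~\ref{thm:tech}, reading off $M_{A_{(1,1)}}=I$ and $M_{A_2}=\left(\begin{smallmatrix} * & 1 \\ 1 & 0 \end{smallmatrix}\right)$ from the path matrices. Your added observation that invertibility over $\Z/2$ forces the upper triangular matrix in part (1) to have unit diagonal is exactly the bookkeeping needed to match the stated form.
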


Given a representation of a DGA $(\A(K,\ast),\d)$ sending $t$ to a
matrix $M$, we can clearly construct a representation sending $t$ to any
matrix conjugate to $M$. In the group $GL_2(\Z/2)$, there are three
conjugacy classes, represented by
\[
I = \left( \begin{matrix} 1 & 0 \\ 0 & 1 \end{matrix} \right),
\hspace{5ex}
A = \left( \begin{matrix} 0 & 1 \\ 1 & 0 \end{matrix} \right),
\hspace{5ex}
B = \left( \begin{matrix} 0 & 1 \\ 1 & 1 \end{matrix} \right).
\]

Easy linear algebra yields the following corollary to
Theorem~\ref{thm:2dim}.

\begin{corollary}
Let $K$ be a Legendrian knot in $\R^3$ with DGA $(\A(K,\ast),\d)$.
\label{cor:2dim}
\begin{enumerate}
\item
$K$ is $A_{(1,1)}$-compatible if and only if $(\A(K,\ast),\d)$ has a
two-dimensional representation sending $t$ to either $I$ or $A$.
\item
$K$ is $A_{(2)}$-compatible if and only if $(\A(K,\ast),\d)$ has a
two-dimensional representation sending $t$ to either $A$ or $B$
(equivalently, sending $t$ to any invertible matrix besides $I$).
\end{enumerate}
Furthermore, both conditions depend only on the smooth type and
Thurston--Bennequin number of $K$, and
either of these conditions ensures that $K$ maximizes $tb$.
\end{corollary}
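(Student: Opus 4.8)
The plan is to derive the corollary from Theorem~\ref{thm:2dim} by a short conjugation argument. The key preliminary observation is that the set of matrices realizable as $f(t)$ for a representation $f$ is closed under conjugation in $GL_2(\Z/2)$: if $f\colon(\A(K,\ast),\d)\to(\End(V),0)$ is an ungraded representation and $g\in GL(V)$, then $X\mapsto g\,f(X)\,g^{-1}$ is again a representation, since conjugation by $g$ is an algebra automorphism of $\End(V)$ that fixes the zero differential, and (as $\p=1$) there is no grading to preserve. Hence for any $M\in GL_2(\Z/2)$, whether some representation sends $t$ to $M$ depends only on the conjugacy class of $M$.

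For part~(1), I would note that the matrices $\left(\begin{smallmatrix}1 & *\\ 0 & 1\end{smallmatrix}\right)$ occurring in Theorem~\ref{thm:2dim}(1) are precisely $I$ (for $*=0$) and $\left(\begin{smallmatrix}1 & 1\\ 0 & 1\end{smallmatrix}\right)$ (for $*=1$), the latter being conjugate to $A$. Combining this with the conjugation-invariance above, Theorem~\ref{thm:2dim}(1) becomes the assertion that $A_{(1,1)}$-compatibility is equivalent to the existence of a representation sending $t$ to $I$ or to $A$.

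For part~(2), the main (still elementary) computation is to enumerate the products $M_{A_2}U=\left(\begin{smallmatrix}a & 1\\ 1 & 0\end{smallmatrix}\right)\left(\begin{smallmatrix}1 & v\\ 0 & 1\end{smallmatrix}\right)$ as $a,v$ range over $\Z/2$; here invertibility of $U$ over $\Z/2$ forces its diagonal entries to be $1$. A direct check shows that the four resulting matrices lie in the conjugacy class of $A$ (for $(a,v)=(0,0)$ and $(1,1)$) or of $B$ (for $(a,v)=(0,1)$ and $(1,0)$), and that none of them equals $I$. Since $A$ and $B$ are themselves of the form $M_{A_2}U$, Theorem~\ref{thm:2dim}(2) together with conjugation-invariance gives that $A_{(2)}$-compatibility is equivalent to the existence of a representation sending $t$ to $A$ or to $B$, i.e.\ to any invertible matrix other than $I$.

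Finally, the two ``furthermore'' assertions require no new ingredients. By the discussion preceding Theorem~\ref{thm:2dim}, $A_{(1,1)}$- and $A_{(2)}$-compatibility are exactly the nonvanishing of the reduced ruling polynomials $\widetilde{R}_{S(K,A_{(1,1)})}$ and $\widetilde{R}_{S(K,A_{(2)})}$, and Theorem~\ref{prop:RedTopInv} with $\p=1$ shows these depend only on the pattern, on $\tb(K)$, and on the smooth knot type of $K$; thus both conditions are invariants of smooth type and $\tb$. That either condition forces $K$ to maximize $\tb$ is immediate from Corollary~\ref{cor:MaximizeTB}. There is no genuine obstacle in this argument; the only points needing care are the conjugation-invariance observation and the verification in part~(2) that $I$ never arises as $M_{A_2}U$, which is precisely what distinguishes the $A_{(2)}$-compatibility condition from the $A_{(1,1)}$-compatibility one.
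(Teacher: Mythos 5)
Your proposal is correct and follows essentially the same route as the paper: the paper's proof likewise invokes conjugation-invariance of realizable values of $f(t)$ (stated just before the corollary), enumerates the conjugacy classes in $GL_2(\Z/2)$ of matrices of the form $U$ and $M_{A_2}U$, and cites Theorem~\ref{prop:RedTopInv} together with the $tb$-maximality statement (Theorem~\ref{thm:3equiv}, whose relevant part is Corollary~\ref{cor:MaximizeTB}) for the final assertions. Your write-up simply makes explicit the ``easy linear algebra'' the paper leaves to the reader, and your computations there are accurate.
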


\begin{proof}
For the two numbered statements, enumerate the conjugacy classes in
$GL_2(\Z/2)$ represented by
matrices of the form $U$ or $M_{A_2}U$, with notation as in Theorem~\ref{thm:2dim}. For the final statement, refer to
Theorem~\ref{prop:RedTopInv} and
Theorem~\ref{thm:3equiv}.
\end{proof}

We now discuss $A_{(1,1)}$-compatibility and $A_{(2)}$-compatibility
separately. Note that any $K$ that has an ungraded ruling is
automatically $A_{(1,1)}$-compatible (just double the ruling in
$S(K,A_{(1,1)})$); it also has an ungraded augmentation and thus a
(reducible) two-dimensional representation sending $t$ to $I$.

\begin{figure}
\centerline{\includegraphics[scale=.5]{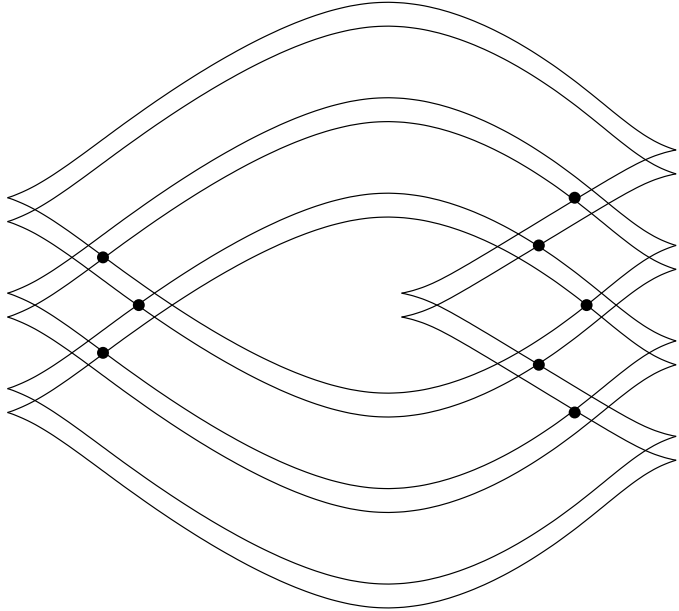}}
\caption{A ruling for the double of a negative torus knot. Dots
  represent switches. Shown here: $S(K,A_{(1,1)})$, where $K$ is a
  standard Legendrian $(3,-4)$ torus knot.}
\label{fig:TorusKnot}
\end{figure}

Sivek,
in his paper \cite{Siv} along with some other examples posted online, has constructed examples of Legendrian knots that have no
rulings but whose DGAs do have a (necessarily irreducible) two-dimensional
representation sending $t$ to $I$. These include Legendrian versions
of the torus knots
$T(p,-q)$ with $q>p\geq 3$ as well as $m(9_{42})$, $m(10_{128})$, and
$m(10_{136})$. By Corollary~\ref{cor:2dim}, each of these knots $K$ is
$A_{(1,1)}$-compatible and thus $S(K,A_{(1,1)})$ has a reduced
ruling. This last fact can be seen explicitly for the torus knots,
independent of Sivek's work; see Figure~\ref{fig:TorusKnot}, which
gives a reduced ruling when $K$ is a $(3,-4)$ torus knot, and which
can be readily generalized to any maximal-$tb$ Legendrian negative
torus knot.

Maximal-$tb$ Legendrian representatives of most knots with $10$ or
fewer crossings satisfy the Kauffman polynomial bound on $tb$ and thus
have ungraded rulings and augmentations. The exceptions are the knots
$T(3,-4) = m(8_{19})$, $m(9_{42})$, $m(10_{124}) = T(3,-5)$,
$m(10_{128})$, $m(10_{132})$, and $m(10_{136})$. Sivek's calculations,
along with Corollary~\ref{cor:2dim}, gives the following.

\begin{theorem}
Let $K$ be a Legendrian representative of a knot with
crossing number $\leq 10$. Then $K$ is $A_{(1,1)}$-compatible if and
only if both of the following hold: $K$ maximizes $tb$, and $K$ is not of
topological type $m(10_{132})$.
\end{theorem}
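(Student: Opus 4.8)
The plan is to reduce the statement to a finite representation-theoretic check, one per topological knot type, and then dispatch each type either via the Kauffman bound or via Sivek's calculations \cite{Siv}. The essential input is Corollary~\ref{cor:2dim}(1): a Legendrian knot $K$ is $A_{(1,1)}$-compatible if and only if $(\A(K,\ast),\d)$ admits a two-dimensional representation sending $t$ to $I$ or $A$, and moreover this property depends only on the smooth knot type and $\tb(K)$ and forces $K$ to maximize $\tb$. In particular, for a fixed topological type $\kappa$ the predicate ``is $A_{(1,1)}$-compatible'' is determined by $\tb$ and can hold only at the maximal value of $\tb$. Thus it suffices to decide, for each of the finitely many knot types $\kappa$ with crossing number $\leq 10$, whether a maximal-$\tb$ Legendrian representative of $\kappa$ is $A_{(1,1)}$-compatible.

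First I would handle the generic case. If $\kappa$ satisfies the Kauffman polynomial bound on $\tb$, then by Theorem~\ref{thm:Kauffman} the $1$-graded ruling polynomial of a maximal-$\tb$ representative $K$ is nonzero, so $K$ has an ungraded normal ruling and hence an ungraded augmentation; the latter yields a (reducible) two-dimensional representation of $(\A(K,\ast),\d)$ with $t\mapsto I$, so $K$ is $A_{(1,1)}$-compatible by Corollary~\ref{cor:2dim}(1). (Equivalently, one doubles the ruling inside $S(K,A_{(1,1)})$.) By the tabulation quoted just before the theorem, this disposes of every knot type with crossing number $\leq 10$ except the six exceptions $T(3,-4)=m(8_{19})$, $m(9_{42})$, $T(3,-5)=m(10_{124})$, $m(10_{128})$, $m(10_{132})$, and $m(10_{136})$.

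Next I would treat the six exceptional types using Sivek's work. Five of them---$T(3,-4)$, $T(3,-5)$, $m(9_{42})$, $m(10_{128})$, $m(10_{136})$---have maximal-$\tb$ DGAs admitting a two-dimensional representation sending $t$ to $I$: the two torus knots fall under the family $T(p,-q)$ with $q>p\geq 3$ (and the corresponding reduced ruling of $S(K,A_{(1,1)})$ can in fact be exhibited directly, as in Figure~\ref{fig:TorusKnot}), while $m(9_{42})$, $m(10_{128})$, $m(10_{136})$ appear on Sivek's explicit list. By Corollary~\ref{cor:2dim}(1), each of these five is $A_{(1,1)}$-compatible. For the remaining type $m(10_{132})$, Sivek's calculation shows that the maximal-$\tb$ DGA admits no two-dimensional representation sending $t$ to $I$ or $A$; by Corollary~\ref{cor:2dim}(1) a maximal-$\tb$ representative of $m(10_{132})$ is therefore \emph{not} $A_{(1,1)}$-compatible.

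Finally I would assemble these facts. For an arbitrary Legendrian $K$ of crossing number $\leq 10$: if $K$ does not maximize $\tb$, then it is not $A_{(1,1)}$-compatible (Corollary~\ref{cor:2dim}), matching the right-hand side; if $K$ maximizes $\tb$, then by the smooth-type-and-$\tb$ invariance its compatibility equals that of a maximal-$\tb$ representative of its topological type, which the previous two paragraphs show holds exactly when that type is not $m(10_{132})$. This yields the claimed equivalence. The main obstacle here is not conceptual but computational: everything hinges on the six exceptional DGAs, and in particular on the \emph{non}-existence assertion for $m(10_{132})$, which relies on Sivek's (computer-assisted) representation count rather than on any argument internal to the structure developed here. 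The contribution of the present paper is precisely to package $A_{(1,1)}$-compatibility as this finite list of representation-existence questions and to guarantee---through Theorems~\ref{prop:RedTopInv} and~\ref{thm:3equiv}, as recorded in Corollary~\ref{cor:2dim}---that checking a single representative per topological type is enough.
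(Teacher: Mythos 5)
Your proof is correct and follows essentially the same route as the paper: reduce to one maximal-$\tb$ representative per topological type via the invariance built into Corollary~\ref{cor:2dim}, use the Kauffman bound to produce ungraded rulings (hence reducible two-dimensional representations with $t\mapsto I$) for all but the six exceptional types, and invoke Sivek's computations to settle those exceptions. The only cosmetic difference is the input for $m(10_{132})$: the paper cites Sivek's result that the DGA of a maximal-$\tb$ representative is \emph{trivial}, which rules out representations of every dimension at once, rather than a count showing no two-dimensional representation sends $t$ to $I$ or $A$.
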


Note that the ``only if'' statement follows from Sivek's calculation
in \cite{Siv}
that the DGA for a particular maximal-$tb$ representative of
$m(10_{132})$ is trivial.

We remark that from our results, the
existence of a ruling for $S(K,A_{(1,1)})$ implies the
existence of a two-dimensional representation of $(\A(K,\ast),\d)$,
but not necessarily one that sends $t$ to $I$ as in Sivek's
examples; the representation might send $t$ to $A$. However, we do not
know of a knot where $S(K,A_{(1,1)})$ has a ruling but
$(\A(K,\ast),\d)$ has no two-dimensional representation sending $t$ to
$I$.

Finally, we turn to $A_{(2)}$-compatibility. Unlike for $A_{(1,1)}$,
the existence of an ungraded ruling of $K$ does \textit{not}
necessarily imply that $K$ is $A_{(2)}$-compatible. Indeed, the
standard Legendrian unknot with $tb=-1$ is not
$A_{(2)}$-compatible. However, a slightly stronger condition on
rulings of $K$ does imply $A_{(2)}$-compatibility.

\begin{theorem} \label{prop:A2}
If $K$ is a Legendrian knot in $\R^3$ with $\deg R_K(z) \geq 0$ (i.e.,
$K$ has a ruling where the number of switches is at least the number
of right cusps), then $K$ is $A_{(2)}$-compatible.
\end{theorem}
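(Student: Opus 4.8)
The plan is to pass to the equivalent representation-theoretic statement and then build the required representation by hand from the ruling. By Corollary~\ref{cor:2dim}(2), $K$ is $A_{(2)}$-compatible precisely when $(\A(K,\ast),\d)$ admits a two-dimensional (ungraded) representation $f$ with $f(t)$ conjugate to $A$ or $B$, i.e.\ with $f(t)\neq I$. So it suffices to produce, from a ruling $\rho$ of $K$ with $j(\rho)\geq 0$, a $2$-dimensional representation sending $t$ to a non-identity matrix.

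First I would use the ruling to produce an augmentation: by Theorem~\ref{thm:FIS} (or Theorem~\ref{prop:bg}) the hypothesized $\rho$ gives an ungraded augmentation $\epsilon\colon \A(K,\ast)\to\Z/2$. I then look for a representation of the upper-triangular shape
\[
f(x)=\begin{pmatrix}\epsilon(x) & h(x)\\ 0 & \epsilon(x)\end{pmatrix},
\]
where $h\colon\A(K,\ast)\to\Z/2$ is $\Z/2$-linear. A direct check shows that $f$ is an algebra map iff $h$ is an $\epsilon$-twisted derivation ($h(xy)=\epsilon(x)h(y)+h(x)\epsilon(y)$), that $f\circ\d=0$ iff $h(\d a_i)=0$ on every crossing generator $a_i$ (the diagonal conditions being just $\epsilon\circ\d=0$), and that $f(t)=\left(\begin{smallmatrix}1&h(t)\\0&1\end{smallmatrix}\right)\neq I$ iff $h(t)=1$. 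Since such an $h$ is determined freely by its values on the $a_i$ and on $t$ (with $h(t^{-1})=h(t)$ forced), the whole problem reduces to a linear-algebra question: does there exist an $\epsilon$-twisted derivation $h$ with $h\circ\d=0$ and $h(t)=1$? Writing $\partial_1^\epsilon$ for the linearization of $\d$ at $\epsilon$, regarded as a map into the span of the degree-$0$ generators together with the single generator $t$, this asks exactly that $t\notin\operatorname{im}\partial_1^\epsilon$; over $\Z/2$ a separating functional $h$ then exists.

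The crux is to prove that $j(\rho)\geq 0$ forces $t\notin\operatorname{im}\partial_1^\epsilon$. Here I expect to use the \emph{explicit} ruling-augmentation of Theorem~\ref{prop:bg}: realizing $\epsilon$ through the associated Morse complex sequence makes the augmented crossings correspond to the switches of $\rho$ and lets me read off $\partial_1^\epsilon$. The coefficient of $t$ is carried by the right-cusp generators (whose differentials have the form $1+t+\cdots$), and the relations among these generators are governed by the switches; the inequality $j(\rho)=\#\text{switches}-\#\text{right cusps}\geq 0$ should be exactly what prevents any combination of degree-$1$ generators from differentiating onto $t$. A clean way to package this is via the Ng--Sabloff duality pairing on linearized homology: the class pairing nontrivially with $t$ is a fundamental class of the ruling surface, which is non-null precisely when that surface has positive genus, i.e.\ when $j(\rho)\geq 0$. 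As a consistency check, the standard unknot has $\d a=1+t$, hence $\partial_1^\epsilon a=t$ and $t\in\operatorname{im}\partial_1^\epsilon$; correspondingly $j=-1<0$ and the unknot is not $A_{(2)}$-compatible, matching the text. Reconciling the switch/right-cusp count with survival of $[t]$ in general is the step I expect to be the main obstacle.

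Finally, I would record the parallel geometric picture, which both motivates the count and could yield an independent proof. Since the pattern $A_2$ is a single positive crossing it coincides with the half-twist $\Delta_2$, so Lemma~\ref{lem:TwRul} guarantees that \emph{every} normal ruling of $S(K,A_2)$ is automatically reduced; it therefore suffices to build any ruling of the connected double $S(K,A_2)$. This double traverses $K$ twice, with the two sheets interchanged by the monodromy at the inserted $A_2$ crossing, and a globally consistent ruling (necessarily with trivial thin part, by the Cusp Lemma) must absorb this sheet-swap. The naive ``two parallel copies of $\rho$'' fails exactly because of this monodromy, and resolving it requires spending one switch beyond those needed to cap the right cusps --- available precisely when $\#\text{switches}\geq\#\text{right cusps}$. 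Turning this heuristic into an explicit ruling diagram would give a self-contained argument dual to the algebraic crux above, but I anticipate the bookkeeping near the $A_2$ crossing and the nested cusps to be the delicate point.
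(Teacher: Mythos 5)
Your reduction via Corollary~\ref{cor:2dim}(2) and the linear-algebra reformulation (existence of an $\epsilon$-twisted derivation $h$ with $h\circ\d=0$ and $h(t)=1$, i.e.\ $t\notin\operatorname{im}\partial_1^\epsilon$) are both correct as far as they go, but the crux step you flag as "the main obstacle" is not just unproven --- it is false, so the whole strategy cannot be completed in the form you propose. The maximal-$tb$ right-handed trefoil is a counterexample: it has a ruling with all three crossings switched, so $\deg R_K(z)=1\geq 0$, and its DGA (with a base point on the upper strand) is $\d c_1 = 1+tq_1+tq_3+tq_1q_2q_3$, $\d c_2 = 1+q_1+q_3+q_3q_2q_1$, $\d q_i=0$. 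It has five augmentations, and for \emph{every} one of them the two conditions $h(\d c_1)=h(\d c_2)=0$ force $h(t)=0$. For instance, at $\epsilon(q_1,q_2,q_3)=(1,1,1)$ the twisted Leibniz rule gives $h(\d c_1)=h(t)+h(q_2)$ and $h(\d c_2)=h(q_2)$; at $\epsilon=(1,0,0)$ it gives $h(\d c_1)=h(q_1)+h(q_3)+h(t)$ and $h(\d c_2)=h(q_1)+h(q_3)$; the other three cases are similar. Equivalently, $t+1$ lies in the image of the linearized differential at every augmentation, so no representation of your proposed shape $f(x)=\left(\begin{smallmatrix}\epsilon(x)&h(x)\\ 0&\epsilon(x)\end{smallmatrix}\right)$ with $f(t)\neq I$ exists, even though the trefoil \emph{is} $A_{(2)}$-compatible. (Since the relevant existence questions are invariant under base-point moves and stable tame isomorphism, this conclusion does not depend on conventions.) The structural problem is that you force both diagonal characters to be the \emph{same} augmentation; the representation guaranteed by the theorem may be irreducible, or reducible only with two \emph{distinct} augmentations on the diagonal. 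Indeed, for the trefoil one can take $\epsilon_1=(1,0,0)$, $\epsilon_2=(0,0,1)$ and an $(\epsilon_1,\epsilon_2)$-derivation with $h(q_2)=h(t)=1$, $h(q_1)=h(q_3)=0$; but even this bilinearized ansatz would still leave you needing to prove the (now different) crux claim, and the Sabloff-duality/fundamental-class heuristic you sketch is nowhere near supplying that.

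The paper's proof avoids representations entirely and runs in the opposite direction through Theorem~\ref{thm:tech}: it constructs a normal ruling of $S(K,A_{(2)})$ directly from the ruling $\rho$ of $K$. The ruling disks of $\rho$ ($c$ of them, one per right cusp) and its $s$ switches form a planar graph with $c$ vertices and $s$ edges; since $s\geq c$ this graph contains a nonempty cycle, whose edge set is a nonempty collection of ``distinguished'' switches meeting every disk an even number of times. The $A_2$ crossing is inserted at one distinguished switch, and switches in the doubled front are then prescribed crossing-by-crossing as in Figure~\ref{fig:switchA2}; the parity condition is exactly what makes the doubled paths close up into a ruling. Your closing ``geometric picture'' gestures toward this construction (and your observation via Lemma~\ref{lem:TwRul} that every ruling of $S(K,A_{(2)})$ is automatically reduced is correct and is implicitly used), but it misidentifies the mechanism: what $s\geq c$ buys is not ``one spare switch'' to absorb the monodromy, but the existence of a cycle --- an even-parity condition on each ruling disk --- and you leave that step, like the algebraic one, unproven.
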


\begin{proof}
Consider a ruling of $K$ where the number of switches $s$ is at least the number
of right cusps $c$; this decomposes the front of $K$ into $c$
unknots. Construct a (planar) graph with $c$ vertices and $s$ edges, where the
vertices correspond to the unknots and edges correspond to
switches. Since $s \geq c$, this graph has a nonempty closed
loop. Thus we may choose some nonempty subset of ``distinguished''
switches such that every unknot contains an even number of
distinguished switches.

\begin{figure}
\centerline{\includegraphics[scale=.5]{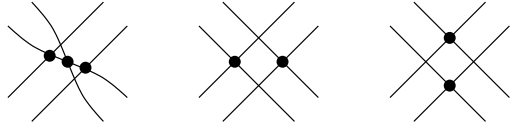}}
\caption{
Switches at doubled crossings in $S(K,A_{(2)})$: at the distinguished
switch $D$ (left); at every other distinguished switch (middle); and
at every non-distinguished switch (right).
}
\label{fig:switchA2}
\end{figure}

We use this information to construct a ruling of
$S(K,A_{(2)})$. Choose one distinguished switch $D$ of the
ruling of $K$, which we also view as a crossing in the front of
$K$. We construct the front for $S(K,A_{(2)})$ as follows: start with
the double of the front for $K$, so that every crossing in the front
for $K$ produces four crossings in the double; then place the extra
crossing for $A_{(2)}$ in the middle of the four crossings
corresponding to $D$, as shown in the leftmost diagram of
Figure~\ref{fig:switchA2}.

Now place switches at crossings of
$S(K,A_{(2)})$ as follows: do not switch at crossings corresponding to
cusps of $K$; at crossings corresponding to crossings of $K$, switch
according to Figure~\ref{fig:switchA2}. We leave it as an exercise to
the reader to check that this choice of switches on $S(K,A_{(2)})$
determines a ruling.
\end{proof}

We can use Theorem~\ref{prop:A2} to address the question of which
small Legendrian knots are $A_{(2)}$-compatible.
Recall from \cite{R} that the ungraded ruling polynomial $R_K(z)$
depends only on $tb(K)$ and
the Kauffman polynomial of the smooth knot underlying $K$.
An inspection of the Kauffman polynomial for smooth knots with up to
$10$ crossings shows that if $K$ is a maximal-$tb$ representative of a
smooth knot with at most $10$ crossings, then $\deg R_K(z) \geq 0$
unless $K$ is of one of the following types:
$0_1$, $m(8_{19})$, $m(9_{42})$, $m(9_{46})$, $m(10_{124})$,
$m(10_{128})$, $m(10_{132})$, $m(10_{136})$, and $m(10_{140})$. Of
these exceptions, a direct computation using \textit{Mathematica}
shows that maximal-$tb$ representatives of $m(8_{19})$, $m(9_{42})$,
$m(9_{46})$, $m(10_{124})$, $m(10_{128})$, and
$m(10_{136})$ are $A_{(2)}$-compatible: their
satellite with $A_{(2)}$ has an ungraded ruling. On the other hand,
maximal-$tb$ representatives of $0_1$, $m(10_{132})$, and
$m(10_{140})$ are not $A_{(2)}$-compatible. We summarize these
findings in the following result.

\begin{theorem}
Let $K$ be a Legendrian representative of a knot with crossing number
$\leq 10$.
Then $K$ is $A_{(2)}$-compatible if and
only if both of the following hold: $K$ has maximal $tb$, and $K$ is
not of topological type $0_1$, $m(10_{132})$, or $m(10_{140})$.
\end{theorem}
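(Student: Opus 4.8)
The plan is to combine the general structural result of Corollary~\ref{cor:2dim} with the sufficient condition of Theorem~\ref{prop:A2}, reducing the whole statement to a finite check on a short list of exceptional knot types. First I would invoke Corollary~\ref{cor:2dim}, which guarantees that $A_{(2)}$-compatibility depends only on the smooth knot type and $\tb(K)$, and that it forces $K$ to maximize $\tb$. This immediately yields necessity of maximal $\tb$ and lets me restrict attention to a single maximal-$\tb$ Legendrian representative of each smooth type with crossing number $\leq 10$; the conclusion for that representative then transfers to all representatives of the same type.

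Next, for the bulk of the knots I would appeal to Theorem~\ref{prop:A2}: whenever a maximal-$\tb$ representative satisfies $\deg R_K(z) \geq 0$, it is automatically $A_{(2)}$-compatible. By Theorem~\ref{thm:Kauffman} (equivalently \cite{R}), $R_K(z)$ is determined by $\tb(K)$ together with the Kauffman polynomial of the underlying smooth knot, so I would scan the Kauffman polynomials of all knots through $10$ crossings and record exactly those types for which $\deg R_K(z) < 0$ at maximal $\tb$. This produces the finite exceptional list $0_1$, $m(8_{19})$, $m(9_{42})$, $m(9_{46})$, $m(10_{124})$, $m(10_{128})$, $m(10_{132})$, $m(10_{136})$, $m(10_{140})$; every other knot in range is settled by Theorem~\ref{prop:A2}.

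It then remains to resolve these nine exceptional types individually. For the six types $m(8_{19})$, $m(9_{42})$, $m(9_{46})$, $m(10_{124})$, $m(10_{128})$, $m(10_{136})$ I would exhibit an explicit ungraded ruling of the satellite $S(K,A_{(2)})$, found by a direct \textit{Mathematica} computation on a chosen maximal-$\tb$ front; by Corollary~\ref{cor:2dim} (or directly by definition of $A_{(2)}$-compatibility) this establishes compatibility. For the three remaining types I must instead establish \emph{non}-compatibility. The unknot $0_1$ is immediate, since the standard $\tb=-1$ unknot is already observed above not to be $A_{(2)}$-compatible. For $m(10_{132})$ I would cite Sivek's computation in \cite{Siv} that a maximal-$\tb$ representative has trivial DGA, hence admits no finite-dimensional representation at all, and in particular none of the form required by Corollary~\ref{cor:2dim}.

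The hard part will be $m(10_{140})$, together with the verifications of the six positive cases. Here non-compatibility is equivalent, via Corollary~\ref{cor:2dim}, to the nonexistence of any two-dimensional representation of $(\A(K,\ast),\d)$ sending $t$ to $A$ or $B$, and confirming this requires exhausting the finite but nontrivial space of assignments of the crossing generators to $2\times 2$ matrices over $\Z/2$ subject to $f\circ\d = 0$. I would carry this out as a finite computer search over the maximal-$\tb$ DGA, checking that no such representation exists. Since $A_{(2)}$-compatibility is invariant under smooth type and $\tb$, the result of each computation on one representative settles every Legendrian representative of the corresponding type, which completes both implications of the theorem.
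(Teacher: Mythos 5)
Your proposal is correct and follows essentially the same route as the paper: Corollary~\ref{cor:2dim} for the necessity of maximal $tb$ and for invariance under smooth type and $tb$, then Theorem~\ref{prop:A2} plus a scan of Kauffman polynomials to isolate the same nine exceptional types, then finite computations to settle those. The only (harmless) deviations are in how you dispatch the negative exceptional cases---citing Sivek's trivial DGA for $m(10_{132})$ and running a representation search for $m(10_{140})$, where the paper instead verifies by direct \textit{Mathematica} computation that the satellites $S(K,A_{(2)})$ of the three negative cases admit no ungraded ruling; these checks are equivalent by Corollary~\ref{cor:2dim}.
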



\begin{thebibliography}{}


\bibitem{Ch} Yu. V. Chekanov, Differential algebra of Legendrian
  links, \textit{Invent. Math.} \textbf{150} (2002), no. 3, 441--483.

\bibitem{ChP} Yu. Chekanov and P. Pushkar', Combinatorics of
Legendrian links and the Arnol'd 4-conjectures, \textsl{Uspekhi Mat. Nauk}
{\bf 60} (2005), no. 1, 99--154, translated in \textsl{Russian Math. Surveys}
{\bf 60} (2005), no. 1, 95--149.

\bibitem{EENS} T. Ekholm, J. Etnyre, L. Ng, and M. Sullivan, Knot
  contact homology, arXiv:1109.1542.

\bibitem{ENS} J. B. Etnyre, L. L. Ng, and J. M. Sabloff, Invariants of
  Legendrian knots and coherent orientations, \textit{J. Symplectic
    Geom.} \textbf{1} (2002), no. 2, 321--367.

\bibitem{F} D. Fuchs, The Chekanov--Eliashberg invariant of Legendrian knots: Existence of augmentations, \textsl{J. Geom. Phys.} {\bf 47 } (2003), no. 1, 43--65.

\bibitem{FI}
D. Fuchs and T. Ishkhanov, Invariants of Legendrian knots and
decompositions of front diagrams, \textit{Mosc. Math. J.} \textbf{4}
(2004), no. 3, 707--717, 783.


\bibitem{Henry2011}  M. B. Henry, Connections between Floer-type invariants and Morse-type invariants of Legendrian knots, \textit{Pacific J. Math.} {\bf 249} (2011), no. 1, 77-133.

\bibitem{Ka} T. K\'alm\'an, Braid-positive Legendrian links,
  \textit{Int. Math. Res. Not.} \textbf{2006}, Art. ID 14874.

\bibitem{LR} M. Lavrov and D. Rutherford, Generalized normal rulings and invariants of Legendrian solid torus links, \textit{Pacific J. Math.}, to appear; arXiv:1109.1319.

\bibitem{Mish} K. Mishachev, The $N$-copy of a topologically trivial
  Legendrian knot, \textit{J. Symplectic Geom.} \textbf{1} (2002),
  no. 4, 659--682.

\bibitem{NgCLI} L. L. Ng, Computable Legendrian invariants,
  \textit{Topology} \textbf{42} (2003), no. 1, 55--82.

\bibitem{NgLSFT}  L. Ng, Rational Symplectic Field Theory for
  Legendrian knots, \textit{Invent. Math.} \textbf{182} (2010), no. 3,
  451--512.


\bibitem{NgSabloff} L. Ng and J. Sabloff, The correspondence between augmentations and rulings for Legendrian knots, \textit{Pacific J. Math.} {\bf 224} (2006), no. 1, 141-150.

\bibitem{NgTr} L. Ng and L. Traynor, Legendrian solid-torus links, \textsl{J. Symplectic Geom.} {\bf 2} (2005), no. 3, 411-443.


\bibitem{R}  D. Rutherford, The Thurston--Bennequin number, Kauffman polynomial, and ruling invariants of a Legendrian link: The Fuchs conjecture and beyond, \textsl{Int. Math. Res. Not.} \textbf{2006}, Art. ID 78591.

\bibitem{R2} D. Rutherford, HOMFLY-PT polynomial and normal rulings of
  Legendrian solid torus links, \textit{Quantum Topol.} \textbf{2}
  (2011), 183--215.


\bibitem{Sabloff} J. M. Sabloff, Augmentations and rulings of
  Legendrian knots, \textit{Int. Math. Res. Not.} \textbf{2005},
  no. 19, 1157--1180.

\bibitem{SVV} C. Shonkwiler and D. S. Vela-Vick, Legendrian contact homology and nondestabilizability, \textit{J. Symplectic Geom.} {\bf 9} (2011), no. 1, 1-12.

\bibitem{Siv} S. Sivek, The contact homology of Legendrian knots with
  maximal Thurston--Bennequin invariant, \textit{J. Symplectic Geom.},
  to appear; arXiv:1012.5038.







\end{thebibliography}
\end{document}